\DeclareMathAlphabet{\mathbf}{T1}{ppl}{bx}{n}
\DeclareMathAlphabet{\mathrm}{T1}{ppl}{m}{n}
\numberwithin{equation}{section}
\newcommand\note[1]%
\def\({\left(}
\def\){\right)}
\def\<{\left<}
\def\>{\right>}
\def\newi {\sqrt{-1}\, }
\newtheorem{theorem}{Theorem}[section]
\newtheorem{proposition}[theorem]{Proposition}
\newtheorem{lemma}[theorem]{Lemma}
\newtheorem{definition}[theorem]{Definition}
\newtheorem{corollary}[theorem]{Corollary}
\theoremstyle{definition}
\newtheorem{example}[theorem]{Example}
\newtheorem{remark}[theorem]{Remark}
\newcommand\lie{\mathfrak}
\renewcommand\t{\lie{t}}
\newcommand\g{\lie{g}}
\newcommand\bb[1]{{\text{\bf#1}}}
\newcommand\Z{\bb{Z}}
\newcommand\R{\mathbb{R}}
\newcommand\C{\mathbb{C}}
\newcommand\J{\mathcal{J}}
\newcommand\cO{\mathcal{O}}
\newcommand     {\comment}[1]   {}
\newcommand{\mute}[2] {}
\newcommand     {\printname}[1] {}
\newcommand\func[1]{\operatorname{\mathrm{#1}}}
\newcommand\funclim[1]{\operatorname*{\mathrm{#1}}}
\newcommand\type{\func{type}}
\renewcommand\exp{\func{exp}}
\renewcommand\lim{\funclim{lim}}
\newcommand\sur{\mathrel{\to\kern-1.8ex\to}}
\newcommand\iso{\mathrel{\hookrightarrow\kern-1.8ex\to}}
\newcommand\longhookrightarrow{\lhook\joinrel\longrightarrow}
\newcommand\longsur{\mathrel{\longrightarrow\kern-1.8ex\to}}
\newcommand\longiso{\mathrel{\longhookrightarrow\kern-1.8ex\to}}
\renewcommand\supset{\supseteq}
\begin{document}

\bibliographystyle{amsalpha}
\date{\today}
\title{The Equivariant cohomology theory of twisted generalized complex manifolds
}

\author{Yi Lin}
\maketitle
\begin{abstract} It has been shown recently by Kapustin and Tomasiello that
the mathematical notion of  Hamiltonian actions on twisted
generalized K\"ahler manifolds is in perfect agreement with the
physical notion of general $(2,2)$ gauged sigma models with
three-form fluxes. In this article, we study the twisted equivariant cohomology theory of
Hamiltonian actions on $H$-twisted generalized complex manifolds.
 If the manifold satisfies the $\overline{\partial}\partial$-lemma, we
establish the equivariant formality theorem. If in addition, the manifold satisfies the
generalized K\"ahler condition, we prove the Kirwan injectivity in
this setting. We then consider the Hamiltonian action of a torus on
an $H$-twisted generalized Calabi-Yau manifold and extend to this
case the Duistermaat-Heckman theorem for the push-forward measure.

 As a side result, we show in this paper that the generalized K\"ahler quotient
of a generalized K\"ahler vector space can never have a
(cohomologically) non-trivial twisting. This gives a negative answer
to a question asked by physicists whether one can construct $(2,2)$
gauged linear sigma models with non-trivial fluxes.
\end{abstract}

\section{Introduction}

Generalized complex geometry  was initiated by Hitchin \cite{H02}
and further developed by Gualtieri in \cite{Gua03}. It provides a
unifying framework for both symplectic geometry and complex
geometry, and turns out to be a useful geometric language
understanding various aspects of string theory. Generalized K\"ahler
geometry, the generalized complex version of K\"ahler geometry, was
introduced by Gualtieri, who also shows that it is equivalent to the
bi-Hermitian geometry which was first discovered by Gates, Hull and
Rocek \cite{GHR84} from the study of general $(2,2)$ supersymmetric
sigma models.

In \cite{LT05}, Tolman and the author proposed a definition of a
Hamiltonian action and a moment map in both generalized complex
geometry and generalized K\"ahler geometry. Recently, it has been shown by
Kapustin and Tomasiello \cite{KT06} that the mathematical notion of
Hamiltonian actions on generalized K\"ahler manifolds introduced in
\cite{LT05} is in perfect agreement with the expectations from
renormalization group flow, and corresponds exactly to the physical
notion of general $(2,2)$ gauged sigma models with three-form
fluxes.


In this paper, we study the equivariant cohomology theory of
Hamiltonian actions on twisted generalized complex and K\"ahler
manifolds. To better explain the ideas involved in this work, let us
first state an equivalent definition of Hamiltonian actions on
generalized complex manifolds. Suppose a compact Lie group $G$
with Lie algebra $\frak{g}$ acts on an $H$-twisted generalized
complex manifold $(M,\J)$, where $H$ is a closed three form on $M$. The action is said to be Hamiltonian if
there exists an equivariant smooth function $\mu: M\rightarrow
\frak{g}^*$, called the generalized moment map, a
$\frak{g}^*$-valued one form $\alpha \in \Omega^1(M,\frak{g}^*)$,
called the moment one form, such that $H+\alpha$ is equivariantly
closed (in the usual Cartan model) and such that the following
diagram commutes:

\[\begin{diagram}
 g &\rTo & C^{\infty}(M)\\
 &\rdTo &\dTo \\
 &  & C^{\infty}(TM \oplus T^*M),
\end{diagram}\]

where the horizontal map is given by $\xi\mapsto f^{\xi}$, the
vertical map is given by $ f \mapsto -\J df$, and the diagonal map
is given by $\xi \mapsto \xi_M+\alpha^{\xi}$. Therefore, morally
speaking, the Hamiltonian action of a compact Lie group $G$ not only
defines an action on the manifold $M$, but also defines an extended
action on the exact Courant algebroid $TM\oplus T^*M$. \footnote{
Indeed, the elements of extended group actions on exact Courant
algebroids has been developed in \cite{Hu05}, and, more generally,
in \cite{BCG05}.}

The equivariant cohomology theory has been an important topic in the
study of group actions on manifolds. However, the usual equivariant
cohomology theory encodes only the cohomological information of
group actions on manifolds, and does not give us any information on
the extended group actions on the exact Courant algebroid $TM\oplus
T^*M$. So when considering the consequence of the
$\bar{\partial}\partial$-lemma for Hamiltonian actions on
generalized complex manifolds, the author \cite{Lin06} was forced to
introduce two extensions of the usual equivariant de Rham cohomology
theory,  called the generalized equivariant cohomology and the
generalized equivariant Dolbeault cohomology, to encode the
information of the extended actions on $TM\oplus T^*M$.

In this paper, we explain that the generalized equivariant
cohomology introduced in \cite{Lin06} is actually the usual
equivariant cohomology twisted by the equivariantly closed three
form $H+\alpha$. We note that the equivariant cohomology theory of
general extended group actions on exact Courant algebroids has been
developed by Hu and Uribe \cite{HuUribe06}. \footnote{ We would also like to mention that the equivariant Lie algebroid
cohomology,  as introduced in \cite{BCRR05}, has been first studied in \cite[Sec. 5]{BCG05} in the
context of generalized complex geometry.} In particular, Hu and
Uribe proved that the twisted equivariant cohomology has the usual
expected properties of an equivariant cohomology theory such as
functoriality, Mayer-Vietoris lemma, Thom isomorphism, localization
theorem, etc.

In this context, the equivariant $\overline{\partial}\partial$-lemma
obtained in \cite{Lin06} can be best understood in the framework of
twisted equivariant cohomology theory. Let us consider the
Hamiltonian action of a compact Lie group on an $H$-twisted
generalized complex manifold satisfying the
$\overline{\partial}\partial$-lemma. As an immediate consequence of
the equivariant $\overline{\partial}\partial$-lemma, we obtain the
equivariant formality of the twisted equivariant cohomology
$H_G(M,H+\alpha)$. Now assume the compact Lie group is a torus $T$,
and assume that the manifold satisfies the generalized K\"ahler
condition, we establish the Kirwan injectivity theorem for the
twisted equivariant cohomology $H_T(M,H+\alpha)$.

It is an interesting question that if we can understand the
cohomology of the generalized K\"ahler quotients from the cohomology
of the Hamiltonian generalized K\"ahler manifold upstairs. We
believe that the equivariant formality theorem and Kirwan
injectivity theorem we established in this paper is useful for
answering such a question.  As a side result, we show that the
generalized K\"ahler quotient of a generalized K\"ahler vector space
can never have a (cohomologically) non-trivial twisting. This gives
a negative answer to a question asked by physicists whether one can
construct gauged linear sigma models with non-trivial fluxes.

As an application of the twisted equivariant cohomology,  we
investigate the Hamiltonian action of a torus $T$ on an $H$-twisted
generalized Calabi-Yau manifold $(M,\rho)$, where $\rho$ is an
$H$-twisted generalized Calabi-Yau structure, i.e., a nowhere
vanishing section of the canonical line bundle of the generalized
complex manifold $M$ such that $d\rho=H\wedge\rho$ for the closed
three form $H$.

Indeed, the Hamiltonian action on generalized Calabi-Yau manifolds
has been studied in the very recent works of Nitta, \cite{NY06},
\cite{NY07}. Under the assumption that both the twisting three form
$H$ and the moment one form $\alpha$ are zeroes, Nitta showed that
the generalized Calabi-Yau structure descends to the generalized
complex quotient. More interestingly, he observed that the Mukai
pairing $(\rho,\overline{\rho})$ gives rise to a volume form and so
a measure on the generalized Calabi-Yau manifold; and if the type of
the generalized Calabi-Yau structure is constant, then the push-froward
measure via the generalized moment map is absolutely continuous and
varies as a polynomial in the connected component of the regular
values of the generalized moment map. This has thus generalized the
usual Duistermaat-Heckman theorem to the setting of generalized
Calabi-Yau manifolds. \footnote{ However, it seems that Nitta's
proof \cite{NY07} has a small gap. Please see Remark
\ref{why-enlarge-coefficient-ring} for an explanation.}

However, the assumptions that both the twisting three form $H$ and
the moment one form $\alpha$ equal zero are not natural when one
studies group actions on generalized complex manifolds. Indeed, even
when the generalized complex manifold is untwisted, the moment one
form may come up very naturally \cite{LT05}. In this article, we
show that neither one of these two assumptions made in \cite{NY07}
are necessary, establishing the Duistermaat-Heckman theorem in the
setting of twisted generalized Calabi-Yau geometry in full
generality.  To achieve this we need to introduce the equivariant
differential forms with coefficients in the ring of formal power
series, which we discuss in the appendix.

Along the same line, we explain that the notion of Hamiltonian
action on generalized Calabi-Yau manifolds generalizes that of
Hamiltonian action on symplectic manifolds.  In addition, we give
concrete examples of Hamiltonian action on (non-symplectic)
generalized Calabi-Yau manifolds and calculate the density function
of the push-forward measure via the generalized moment map
explicitly.


Finally we would like to mention that there are four other groups
who have worked independently on the reduction and quotient construction
in generalized geometry: Bursztyn, Cavalcanti and Gualtieri \cite{BCG05}, S. Hu\cite{Hu05},
P. Xu and M. Sti\'{e}non \cite{SX05}, Vaisman, I. \cite{Va05}. In
particular, the connection between the work of Bursztyn, Cavalcanti and Gualtieri and
the gauged sigma model in physics is explained in \cite[Thm. 2.13]{BCG05}.

This paper is organized as follows. Section
\ref{generalities--twisted--equivariant--cohomology} gives a quick
review of some foundational aspects of twisted cohomology and
twisted equivariant cohomology. Section
\ref{generalized-complex-geometry} reviews elements of generalized
complex geometry. Section \ref{t-eq-theory-t-complex} establishes
the equivariant formality and Kirwan injectivity theorems for the
twisted equivariant cohomology of Hamiltonian actions on twisted
generalized K\"ahler manifolds. Section
\ref{generalized-complex-quotients} proves that the generalized
K\"ahler quotients of generalized vector spaces can never have a
non-trivial twisting.  Section \ref{DH-theorem-for-g-Calabi-Yau}
establishes the Duistermaat-Heckman theorem in the setting of
twisted generalized Calabi-Yau manifolds in full generality. Section
\ref{eg-hamiltonian-g-calabi-yau} presents concrete examples of
Hamiltonian action on generalized Calabi-Yau manifolds which are not
symplectic, and computes the density function of the push-forward
measure explicitly. Appendix A discusses the equivariant
differential forms with coefficients in the ring of formal power
series. \,\,\,\,\,\,\,\,\,\, \,\,\,\,\,\,\,\,\,\,
\,\,\,\,\,\,\,\,\,\,

{\bf Acknowledgement:} I would like to thank Tom Baird, Kentaro Hori, Lisa
Jeffrey, Anton kapustin, Yael Karshon, Eckhard Meinrenken, Reyer Sjamaar, and
Allessandro Tomasiello for useful conversations. And I would like to
thank Marco Gualtieri for useful comments on an earlier version of
this article.

A special thank goes to Lisa Jeffrey. The present work would not have
been possible without her kind support.

\section{Twisted equivariant
cohomology}\label{generalities--twisted--equivariant--cohomology}

\subsection{Twisted cohomology, non-equivariant case}

In this subsection, we briefly review the elements of twisted
cohomology theory. We refer to \cite{AS05} for an excellent account
of many fundamental aspects of the theory.

An $H$-twisted manifold $M$ is a smooth manifold in the usual sense
together with a closed three form $H$. The closed three form gives
rise to a twisted differential operator $d_H$ defined by
\[d_H\gamma=d\gamma-H\wedge \gamma, \,\,\,\gamma \in
\Omega(M).\] It is easy to check directly that $d_H^2=0$. We define
the $H$-{\bf twisted cohomology}
\[H(M, H)=\text{ker} d_H/\text{im}d_H.\]
Given a two form $\lambda$ on $M$, then we can form the twisted
differential $d_{H+d\lambda}$ and the group $H(M,H+d\lambda)$. There
is an isomorphism
\[H(M,H)\rightarrow H(M,H+d\lambda),\,\,\,[\alpha]\mapsto
[\text{exp}(\lambda)\alpha].\] So any two closed differential three
forms $H$ and $H'$, if representing the same cohomology class,
determine isomorphic groups $H(M,H)$ and $H(H,H')$. However, as
there are many two forms $\lambda$ such that $H'=H+d\lambda$, these
two groups are not uniquely isomorphic. Note also there is a
homomorphism of groups
\[H(M,H')\otimes H(M,H)\rightarrow H(M,H+H'), \,\,\, [\alpha]\otimes
[\beta]\mapsto [\alpha \wedge \beta].\] In particular, when $H'=0$,
this homomorphism gives $H(M,H)$ a natural $H(M)$-module structure:
if $[\alpha]\in H(M)$ and $[\beta]\in H(M,H)$, then $[\alpha \wedge
\beta]\in H(M,H)$.

\subsection{Equivariant de Rham theory}\label{Equivariant de Rham theory}
Let $G$ be a compact connected Lie group and let
$\Omega_G(M)=(S\g^*\otimes \Omega(M))^G$ be the Cartan complex of
the $G$-manifold $M$. For brevity we will write $\Omega=\Omega(M)$
and $\Omega_G=\Omega_G(M)$ when there is no confusion. By definition
an element of $\Omega_G$ is an equivariant polynomial from $\g$ to
$\Omega$ and is called an equivariant differential form on $M$. The
bi-grading of the Cartan complex is defined by
$\Omega^{ij}_G=(S^i\g^*\otimes \Omega^{j-i})^G$. It is equipped with
a vertical differential $1 \otimes d$, which is usually abbreviated
to $d$, and the horizontal differential $d'$, which is defined by
$d'\alpha (\xi)=-\iota_{\xi_M}\alpha(\xi)$. Here $\iota_{\xi_M}$
denotes inner product with the vector field $\xi_M$ on $M$ induced
by $\xi \in \g$. As a total complex, $\Omega_G$ has the grading
\begin{equation}\label{total-grading} \Omega_G^k=\bigoplus_{i+j=k}\Omega_G^{ij}\end{equation} and the total
differential $d_G=d+d'$. The total cohomology $\text{ker}d_G/
\text{im}d_G$ is the de Rham equivariant cohomology $H_G(M)$.



Now assume that the action of $G$ on $M$ is free. Choose a basis
$\xi_1,\xi_2,\cdots,\xi_k$ of $\frak{g}$. Then there exists
$\theta^1,\theta^2,\cdots, \theta^k \in \Omega^1(M)$, called the
connection elements, such that
$\iota_{\xi_{i,M}}\theta^j=\delta^i_j$, where $\delta^i_j$ is the
Kronecker symbol, and $\xi_{i,M}$ is the vector field on $M$ induced
by $\xi_i$. Any differential form $\gamma \in \Omega(M)$ can be
uniquely written as $\gamma=\gamma_{\text{hor}}+\sum_I \theta^I
\gamma_I$ such that $\iota_{\xi_{i,M}}\gamma_{\text{hor}}=0$ and
$\iota_{\xi_{i,M}}\gamma_I=0$, $1 \leq i\leq k$, where $I$ is a
multi-index, and $\gamma_{\text{hor}}$ is said to be the horizontal
component of the form $\gamma$.

 Let $c_{k,l}^i$ be the coefficient constants of the
Lie algebra $\frak{g}$, let
\begin{equation}\label{curvture-elements} c^i=d\theta^i+\dfrac{1}{2}c_{k,l}^i\theta^k\theta^l\end{equation} be the
curvature elements, and let $\Omega(M)_{\text{bas}}$ be the space of
basic forms in $\Omega(M)$, i.e., $G$-invariant forms which are
annihilated by the vectors fields generated by the group action. The
usual de Rham differential induces a differential $d$ on
 $\Omega_{\text{bas}}$ and the cohomology of the differential
 complex $(\Omega_{\text{bas}},d)$ is denoted by $H(\Omega_{\text{bas}})$.
Then the Cartan map $  \mathcal{C}:\Omega_G(M) \rightarrow
\Omega(M)_{\text{bas}}$, which is defined by
\begin{equation} \label{cartan-map} x^I\otimes \gamma \mapsto c^I \wedge
\gamma_{hor},\end{equation}
 induces an isomorphism from $H_G(M)$ to $H(\Omega_{\text{bas}})$.
 More specifically, there is a chain homotopy operator $Q:\Omega_G\rightarrow
 \Omega_G$ such that $d_GQ-Qd_G=\text{id}-\mathcal{C}$. Define $\partial_r:=\dfrac{\partial}{\partial
 x^r}$, $1 \leq r \leq k$, where $x^1, x^2, \cdots, x^k$ are
 inderterminates in the polynomial ring $S\frak{g}^*$, and define the operators $K$ and $R$ on $\Omega_G(M)$ by
\[K:= -\theta^r \partial_r,\,\,\,R:=d\theta^r \partial_r.\]
  Then the chain homotopy operator $Q$ is given by
  \begin{equation} \label{chain-homotopy} Q =KF(I+RF+(RF)^2+\cdots),\end{equation}
where $F$ is an operator on $\Omega_G(M)$ which preserves its
bi-grading. We refer to \cite[Chapter 5]{GS99} for more details.

 Let $\pi: M\rightarrow M/G$ be the quotient map. It is straightforward to check
 that the pull-back map
 $ \pi^*: H(M/G) \rightarrow H(\Omega_{\text{bas}})$ is an
 isomorphism. In any event, we have the following result.

\begin{theorem}\label{Cartan--Map}(\cite{GS99}) Assume the action of $G$ on $M$ is free. Then the Cartan map
\[ \Omega_G \rightarrow \Omega(M)_{\text{bas}},\,\, x^I\otimes \gamma \mapsto c^I \wedge \gamma_{hor},\]
induces a natural isomorphism from $H_G(M)$ to the ordinary
cohomology $H(M/G)$.

\end{theorem}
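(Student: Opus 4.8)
The plan is to exhibit $H(M/G)$ as the cohomology of the basic subcomplex and to show that the Cartan map is a deformation retraction of cochain complexes onto it. First I would observe that $\Omega(M)_{\text{bas}}$ sits inside the Cartan complex $\Omega_G(M)$ as the summand of polynomial degree zero: a basic form $\gamma$ is $G$-invariant and horizontal, so $d'\gamma=-\iota_{\xi_M}\gamma=0$ and hence $d_G\gamma=d\gamma$, which is again basic. Thus $(\Omega(M)_{\text{bas}},d)$ is a genuine subcomplex of $(\Omega_G(M),d_G)$; write $j$ for its inclusion. One then checks that the Cartan map $\mathcal{C}$ of \eqref{cartan-map} is a cochain map and that $\mathcal{C}\,j=\mathrm{id}$ on $\Omega(M)_{\text{bas}}$ (a basic form is its own horizontal part and carries no polynomial factor). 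The only genuinely computational point here is the identity $\mathcal{C}\,d_G=d\,\mathcal{C}$, which rests on the structure equation for the curvature elements $c^i$ of \eqref{curvture-elements} together with the fact that $\mathcal{C}$ sends the vertical differential of each connection element $\theta^r$ to $c^r$.

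Next I would invoke the chain homotopy operator $Q$ of \eqref{chain-homotopy}. Viewing $\mathcal{C}$ as an endomorphism of $\Omega_G(M)$ via $j$, the stated relation $d_GQ-Qd_G=\mathrm{id}-\mathcal{C}$ says precisely that $j\,\mathcal{C}$ is chain homotopic to the identity of $\Omega_G(M)$. Passing to cohomology gives $j_*\circ\mathcal{C}_*=\mathrm{id}$ on $H_G(M)$, while $\mathcal{C}\,j=\mathrm{id}$ gives $\mathcal{C}_*\circ j_*=\mathrm{id}$ on $H(\Omega_{\text{bas}})$. Hence $\mathcal{C}_*$ and $j_*$ are mutually inverse, and the Cartan map induces an isomorphism $\mathcal{C}_*\colon H_G(M)\xrightarrow{\;\sim\;}H(\Omega_{\text{bas}})$.

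Finally, for the free quotient $\pi\colon M\to M/G$ the pull-back $\pi^*\colon H(M/G)\to H(\Omega_{\text{bas}})$ is an isomorphism, as noted just above. Composing, $(\pi^*)^{-1}\circ\mathcal{C}_*$ is the desired isomorphism $H_G(M)\cong H(M/G)$, and under the identification furnished by $\pi^*$ it is induced by the Cartan map itself. Naturality with respect to equivariant maps follows since the horizontal projection, the curvature elements $c^i$, and $\pi^*$ are all built functorially from the connection data. The hard part will be the bookkeeping of the first two steps --- verifying that $\mathcal{C}$ intertwines $d_G$ and $d$ and that the explicit $Q$ of \eqref{chain-homotopy} realizes the homotopy; both reduce to the Weil-algebra computation of \cite{GS99}, where the geometric series $I+RF+(RF)^2+\cdots$ is finite on each total degree because $RF$ strictly lowers the polynomial degree and hence is nilpotent there.
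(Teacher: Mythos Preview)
Your proposal is correct and follows essentially the same route the paper sketches just before stating the theorem: the paper records the chain homotopy identity $d_GQ-Qd_G=\mathrm{id}-\mathcal{C}$ with $Q$ given by \eqref{chain-homotopy}, and then notes that $\pi^*\colon H(M/G)\to H(\Omega_{\text{bas}})$ is an isomorphism, deferring the verification of these facts to \cite[Chapter 5]{GS99}. You have simply made the logical structure of that sketch explicit --- identifying $\Omega_{\text{bas}}$ as a subcomplex via $j$, checking $\mathcal{C}\,j=\mathrm{id}$, and reading off that $\mathcal{C}_*$ and $j_*$ are mutual inverses from the homotopy identity --- so there is no substantive difference in approach. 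One small correction: your reason for the finiteness of $I+RF+(RF)^2+\cdots$ is fine (each $RF$ lowers polynomial degree by one via $\partial_r$), but note the paper later exploits the complementary fact that $RF$ also \emph{raises form degree by two}, which gives the uniform bound $(RF)^i=0$ for $i>\tfrac{1}{2}\dim M$ used in the proof of Lemma \ref{continuous maps}.
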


\subsection{Twisted equivariant cohomology}\label{Twisted equivariant
cohomology}

Throughout this subsection, we assume that $H_G$ is an equivariantly closed three form in the Cartan model $\Omega_G(M)$.
It gives rise to an $H_G$-twisted differential $d_{G,H_G}:= d_G -H_G\wedge $ on $\Omega_G(M)$ such that $d_{G,H_G}^2=0$.
We define \[ H(\Omega_G(M),H_G)=\text{ker} d_{G,H_G}/\text{im}d_{G,H_G}.\]

Let $\widehat{\Omega}_G=\left(\widehat{S\g^*}\times  \Omega(M)\right)^G$. Here
$\widehat{S\g^*}=\C[[x_1,x_2,\cdots,x_n]]$ is the $a$-adic completion of the polynomial
ring $S\g^*$ with respect to the ideal $a=(x_1,x_2,\cdots,x_n)$.  It is easy to see that the $H_G$-twisted equivariant differential on $\Omega_G$
extends naturally to a differential on $\widehat{\Omega}_G$ which squares to zero. For brevity, this differential on $\widehat{\Omega}_G$ is also denoted by $d_{G,H_G}$. The $H_G$-{\bf
twisted equivariant cohomology} is defined to be
\[ H_G(M, H_G)=\text{ker}(\widehat{\Omega}_G \xrightarrow{ d_{G,H_G}}\widehat{\Omega}_G)\diagup\text{im}(\widehat{\Omega}_G \xrightarrow{ d_{G,H_G}}\widehat{\Omega}_G).\]

In the special case $H_G=0$, the $H_G$-twisted equivariant cohomology is
just $H(\widehat{\Omega}_G(M),d_G)$, the equivariant cohomology with coefficients in the formal power series ring as we discussed in Appendix \ref{generalized-eq-diff-forms}.

Similar to the non-equivariant case, we have the following results.

\begin{lemma}\label{basic-facts-twisted-equivariant-cohomology}
If $\lambda$ is an equivariant differential two form, then there is
an isomorphism
\[H_G(M,H_G)\rightarrow H_G(M,H_G+d_G\lambda),\,\,\, [\alpha]\mapsto
[\text{exp}(\lambda)\alpha].\] Thus two equivariantly closed three
forms $H_G$ and $H_G'$ representing the same equivariant cohomology
class determine the isomorphic groups $H_G(M,H_G)$ and
$H_G(M,H_G')$.

\end{lemma}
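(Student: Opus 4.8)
The plan is to mimic the non-equivariant argument reviewed in the first subsection, where the map $[\alpha]\mapsto[\exp(\lambda)\alpha]$ furnished the isomorphism $H(M,H)\to H(M,H+d\lambda)$. The key computation there is the commutation identity between the twisted differential and multiplication by $\exp(\lambda)$, and I expect the equivariant case to reduce to the same formal manipulation once $d$ is replaced by $d_G$ and $H$ by $H_G$. Concretely, I would first verify the operator identity
\[
d_{G,H_G+d_G\lambda}\bigl(\exp(\lambda)\,\alpha\bigr)=\exp(\lambda)\,d_{G,H_G}\alpha
\]
for every $\alpha\in\widehat{\Omega}_G(M)$. Here $\exp(\lambda)=\sum_{k\ge 0}\lambda^k/k!$ makes sense as an operator on $\widehat{\Omega}_G(M)$ because $\lambda$ has positive total degree, so only finitely many terms act nontrivially on any fixed element, and because we have passed to the completed coefficient ring $\widehat{S\g^*}$ the wedge powers of $\lambda$ cause no convergence trouble.

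The heart of the verification is to expand both sides using the Leibniz rule for $d_G$ and the fact that $d_G\lambda$ is an equivariantly closed (indeed $d_G$-exact) three form, so that wedging with it commutes past $d_G$ in the appropriate graded sense. Writing things out, $d_{G,H_G+d_G\lambda}(\exp(\lambda)\alpha)=d_G(\exp(\lambda)\alpha)-(H_G+d_G\lambda)\wedge\exp(\lambda)\alpha$, and one computes $d_G(\exp(\lambda)\alpha)=d_G\lambda\wedge\exp(\lambda)\alpha+\exp(\lambda)\,d_G\alpha$ since $\lambda$ is even and $d_G\exp(\lambda)=\exp(\lambda)\,d_G\lambda$. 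The two occurrences of $d_G\lambda\wedge\exp(\lambda)\alpha$ cancel, leaving exactly $\exp(\lambda)\bigl(d_G\alpha-H_G\wedge\alpha\bigr)=\exp(\lambda)\,d_{G,H_G}\alpha$, as desired. The one point requiring genuine care is the sign bookkeeping: $\lambda$ is a two-form, hence of even total degree, so it commutes with $d_G$ up to the Leibniz term and introduces no sign in the wedge products, but I would state this explicitly rather than leave it implicit.

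Granting the operator identity, the isomorphism statement is immediate: multiplication by $\exp(\lambda)$ sends $d_{G,H_G}$-closed forms to $d_{G,H_G+d_G\lambda}$-closed forms and $d_{G,H_G}$-exact forms to $d_{G,H_G+d_G\lambda}$-exact forms, so it descends to a well-defined map on cohomology, and its inverse is given by multiplication by $\exp(-\lambda)$, which is handled by the same identity with $\lambda$ replaced by $-\lambda$. The final sentence of the lemma follows formally: if $H_G$ and $H_G'$ represent the same equivariant cohomology class, then $H_G'=H_G+d_G\lambda$ for some equivariant two-form $\lambda$, and the construction just described produces the desired isomorphism $H_G(M,H_G)\cong H_G(M,H_G')$.

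I do not anticipate a serious obstacle, since the argument is essentially the equivariant transcription of the non-equivariant computation already recorded in the excerpt; the only substantive difference is the need to work in the completed complex $\widehat{\Omega}_G(M)$, and the degree argument above shows that $\exp(\lambda)$ is a genuine automorphism there. The main thing to be careful about is therefore not the topology or convergence but the graded Leibniz rule and the verification that $d_G(\exp\lambda)=\exp(\lambda)\,d_G\lambda$, which is where all the content of the proof is concentrated.
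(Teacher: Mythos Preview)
Your proposal is correct and follows exactly the approach the paper indicates: the lemma is stated there without proof, prefaced only by ``Similar to the non-equivariant case, we have the following results,'' and your argument is precisely the equivariant transcription of the non-equivariant computation in Section~2.1. The one minor imprecision is your claim that ``only finitely many terms act nontrivially on any fixed element''---this is not literally true in $\widehat{\Omega}_G$ since the $S^1\g^*\otimes\Omega^0$ component of $\lambda$ contributes infinitely many powers---but your subsequent remark that the completed coefficient ring absorbs this is the correct justification, so the conclusion stands.
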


\begin{lemma}\label{module-structure} Let $H_G$ and $H_G'$ be two
equivariantly closed three forms. If $\gamma$ is $d_{G,H_G}$-closed,
and $\gamma'$ is $d_{G,H_G'}$-closed, then $\gamma' \wedge\gamma$ is
$d_{G,H_G+H_G'}$-closed. Consequently there is a homomorphism
\[H_G(M,H'_G)\otimes H_G(M,H_G)\rightarrow H_G(M,H_G+H'_G),
\,\,\,[\gamma']\otimes [\gamma]\mapsto [\gamma' \wedge \gamma].\] In
particular, when $H_G'=0$, the homomorphism
\[H(\widehat{\Omega}(M)_G, d_G)\otimes H_G(M,H_G)\rightarrow H_G(M,H_G),
\,\,\,[\gamma']\otimes [\gamma]\mapsto [\gamma' \wedge \gamma]\]
defines an $H(\Omega_G(M),d_G)$-module structure on $H_G(M,H_G)$. \end{lemma}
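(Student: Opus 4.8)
The plan is to reduce all three assertions to a single graded Leibniz identity for the twisted equivariant differential, after which the lemma follows by formal manipulation. First I would record the standard fact (implicit in the discussion of the Cartan model above) that $d_G = d + d'$ is an odd derivation of the Cartan complex with respect to the \emph{total} degree: writing $|a|$ for the total degree of a homogeneous element $a$, one has $d_G(a\wedge b) = (d_G a)\wedge b + (-1)^{|a|}\, a\wedge d_G b$. This holds because both the vertical differential $1\otimes d$ and the horizontal differential $d'$ are odd derivations for the total grading (in which the polynomial generators carry degree two), and it passes verbatim to the completed complex $\widehat\Omega_G$, the identity being termwise algebraic.

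The key step is to establish, for all homogeneous $a$ and all $b$ in $\widehat\Omega_G$ and all equivariantly closed three-forms $H_G, H_G'$, the identity
\[
d_{G,H_G+H_G'}(a\wedge b) = (d_{G,H_G'}a)\wedge b + (-1)^{|a|}\, a\wedge (d_{G,H_G}b).
\]
Expanding the left-hand side with the Leibniz rule for $d_G$ and the definition $d_{G,K} = d_G - K\wedge\,$ produces, besides the two desired terms, the cross-terms $-H_G\wedge a\wedge b$ and $-(-1)^{|a|}\, a\wedge H_G\wedge b$. The one delicate point is the sign bookkeeping: since $H_G$ is \emph{odd} of total degree $3$, the Koszul rule gives $a\wedge H_G = (-1)^{3|a|}H_G\wedge a = (-1)^{|a|}H_G\wedge a$, so the two cross-terms coincide and cancel. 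I expect this observation---that the twisting form is odd for the total grading, whence $(-1)^{3|a|} = (-1)^{|a|}$---to be the only genuinely delicate ingredient; the rest is formal.

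Granting the identity, the three claims follow at once. Taking $a,b$ with $d_{G,H_G'}a = 0$ and $d_{G,H_G}b = 0$ gives $d_{G,H_G+H_G'}(a\wedge b) = 0$, which is the first assertion. For well-definedness of $[\gamma']\otimes[\gamma]\mapsto[\gamma'\wedge\gamma]$ on cohomology, I would apply the identity to a primitive: if $\gamma'$ is $d_{G,H_G'}$-closed and $\gamma = d_{G,H_G}\beta$, the identity yields $\gamma'\wedge\gamma = (-1)^{|\gamma'|}\, d_{G,H_G+H_G'}(\gamma'\wedge\beta)$, a coboundary; the symmetric computation, with $\gamma' = d_{G,H_G'}\beta'$ and $\gamma$ closed, gives $\gamma'\wedge\gamma = d_{G,H_G+H_G'}(\beta'\wedge\gamma)$. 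Hence the pairing descends to $H_G(M,H_G')\otimes H_G(M,H_G)\to H_G(M,H_G+H_G')$. Specializing $H_G' = 0$ sends the target to $H_G(M,H_G)$; bilinearity is immediate, associativity follows from associativity of $\wedge$ (applying the identity twice, with $H_G'=0$ on each factor), and the class $[1]$ of the constant function $1$ acts as a unit. Therefore the pairing endows $H_G(M,H_G)$ with the structure of a module over the ring $H(\widehat\Omega_G, d_G)$, completing the proof.
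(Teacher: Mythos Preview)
Your argument is correct. The paper itself does not supply a proof of this lemma; it merely remarks that, ``similar to the non-equivariant case,'' the assertions hold, and leaves the verification to the reader. Your proof is exactly the verification one would expect: the graded Leibniz identity
\[
d_{G,H_G+H_G'}(a\wedge b) = (d_{G,H_G'}a)\wedge b + (-1)^{|a|}\, a\wedge (d_{G,H_G}b),
\]
which follows from the derivation property of $d_G$ together with the oddness of the twisting forms, immediately yields closedness of the product, well-definedness on cohomology, and the module axioms. There is nothing to contrast, since the paper offers no alternative route; your write-up simply makes explicit what the paper takes for granted.
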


Note that since $H(\widehat{\Omega}(M)_G, d_G)$ is
a $(\widehat{S\frak{g}^*})^G$-module, c.f. Appendix \ref{generalized-eq-diff-forms}, $H_G(M,H_G)$ carries a
$(\widehat{S\frak{g}^*} )^G$-module structure. Moreover, since $(S\g^*)^G$ is a subring of $(\widehat{S\frak{g}^*} )^G$,
$H_G(M,H_G)$ is also a $(S\frak{g}^* )^G$-module.

Now we describe the relationship between the twisted equivariant cohomology $H_G(M,H_G)$ and $H(\Omega_G(M),H_G)$.
The sequence of modules
\[ S\frak{g}^*\otimes \Omega(M) \supset a \left(S\frak{g}^*\otimes \Omega(M)\right)
\supset \cdots \supset a^k \left(S\frak{g}^*\otimes
\Omega(M)\right)\supset \cdots \] defines an $a$-adic topology on
$S\frak{g}^*\otimes \Omega(M)$, and so defines an $a$-adic topology on its subspaces
$\Omega_G=\left(S\frak{g}^*\otimes \Omega(M)\right)^G$ and $\text{ker}(\Omega_G \xrightarrow{ d_{G,H_G}}\Omega_G) $.
Then from the quotient map
\[ \text{ker}(\Omega_G \xrightarrow{ d_{G,H_G}}\Omega_G) \rightarrow H(\Omega_G(M),H_G) \]
we get an $a$-adic topology on $H(\Omega_G(M),H_G)$. It is straightforward to check by definition that
 the completion of $H(\Omega(M),H_G)$ with respect to this topology also carries a $(\widehat{S\g^*})^G$-module structure.
  The following result is a simple consequence of \cite[Prop. 10.2]{AM99}.

\begin{proposition} \label{completion-of-cohomology} There is a natural $(\widehat{S\g^*})^G$-module isomorphism from the
$H_G$-twisted equivariant cohomology $H_G(M,H_G)$ to the completion of
$H(\Omega_G(M),H_G)$ with respect to the $a$-adic topology we described above.
\end{proposition}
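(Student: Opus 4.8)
The plan is to realize $H_G(M,H_G)=H(\widehat{\Omega}_G,d_{G,H_G})$ as the completion of $H(\Omega_G(M),H_G)$ by showing that $a$-adic completion commutes with passage to cohomology. First I would fix the filtration underlying both topologies. Let $F^k\Omega_G:=\bigl(a^k(S\g^*\otimes\Omega(M))\bigr)^G$; its $k$-th term is exactly the part of the Cartan complex of polynomial degree $\ge k$, so $\widehat{\Omega}_G=\varprojlim_k \Omega_G/F^k\Omega_G$ is the $a$-adic completion of $\Omega_G$. Writing $H_G=H+\alpha$ with $H$ of polynomial degree $0$ and $\alpha$ of polynomial degree $1$, the operators $d$ and $H\wedge$ preserve polynomial degree while $d'=-\iota$ and $\alpha\wedge$ raise it by one; hence $d_{G,H_G}=d_G-H_G\wedge$ preserves $\{F^k\Omega_G\}$, its completion is the differential on $\widehat{\Omega}_G$, and the topologies in the statement are the subspace filtration $\{Z\cap F^k\Omega_G\}$ on the cocycles $Z=\ker d_{G,H_G}$ together with the induced quotient filtration on $H(\Omega_G,H_G)=Z/B$.

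The engine is that $a$-adic completion is exact on short exact sequences of filtered abelian groups when the subgroup carries the induced filtration and the quotient the image filtration: this is \cite[Prop. 10.2]{AM99}, whose surjective-system hypothesis is automatic here because the transition maps of $\{A/(A\cap F^k)\}$ are surjective. I would apply it to the two short exact sequences of filtered groups
\[0\longrightarrow Z^n\longrightarrow\Omega_G^n\xrightarrow{\ d_{G,H_G}\ }B^{n+1}\longrightarrow0,\qquad 0\longrightarrow B^n\longrightarrow Z^n\longrightarrow H^n\longrightarrow0,\]
with $B^{n+1}=\im d_{G,H_G}$ and $H^n=H^n(\Omega_G,H_G)$. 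Completing the first sequence identifies $\ker$ and $\im$ of the completed differential inside $\widehat{\Omega}_G$; completing the second then gives $H^n(\widehat{\Omega}_G,d_{G,H_G})=\widehat{Z^n}/\widehat{B^n}=\widehat{H^n}$. Since each arrow is a map of $(\widehat{S\g^*})^G$-modules (invariants commute with completion via the continuous Reynolds operator, $G$ being compact) and completion is functorial, the resulting isomorphism $H_G(M,H_G)\cong\widehat{H(\Omega_G(M),H_G)}$ is $(\widehat{S\g^*})^G$-linear.

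The step demanding the most care is the bookkeeping of filtrations on the coboundaries $B^{n+1}$. In the first sequence $B^{n+1}$ acquires the quotient filtration $\{d_{G,H_G}(F^k\Omega_G^n)\}$, whereas to view it as a subobject of $\widehat{\Omega}_G^{n+1}$ — and thus to read off $\ker\hat d$ and $\im\hat d$ correctly — it must carry the subspace filtration $\{B^{n+1}\cap F^k\Omega_G^{n+1}\}$. These two filtrations need not coincide on the nose (a low-degree primitive can produce a coboundary concentrated in high degree), so one must verify that they define the same topology, an Artin–Rees-type comparison. This is the only non-formal point, and it is exactly where the graded structure of $F^\bullet$ — each associated graded piece $F^k/F^{k+1}=(S^k\g^*\otimes\Omega(M))^G$ being a single polynomial-degree stratum — together with the surjective-system clause of \cite[Prop. 10.2]{AM99} is invoked; granting this compatibility, the two completed sequences splice and the proof concludes.
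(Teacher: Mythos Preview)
Your approach coincides with the paper's: the paper offers no argument beyond the sentence ``a simple consequence of \cite[Prop.~10.2]{AM99},'' and your two short exact sequences are precisely the standard unpacking of that citation. You in fact go further than the paper by isolating the one delicate point (the comparison of the quotient and subspace filtrations on $B$), which the paper does not mention at all.
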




We refer to \cite{HuUribe06} for a detailed treatment of the
foundational aspects of the twisted equivariant cohomology. For our
purpose, we will need the following result later on.

\begin{theorem}(\textbf{Localization Theorem})(\cite{HuUribe06})\label{t--localizationI} Suppose a compact
connected torus $T$ acts on a compact manifold $M$. Then the kernel
and cokernel of the canonical map
\[ i^*: H_{T}^*(M,H_T) \rightarrow H_{T}^*(M^T,i^*H_T),\]
have support contained in $\bigcup_H\frak{h}$,  where $M^T$ is the
fixed points set of the torus $T$ action, $H$ runs over the
stabilizers $\neq T$, and $\frak{h}$ is the Lie algebra of $H$.
\end{theorem}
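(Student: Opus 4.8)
The plan is to adapt the classical Atiyah--Bott--Berline--Vergne localization argument to the twisted setting, using two facts already at hand: that $H_T(M,H_T)$ carries a module structure over $(\widehat{S\g^*})^G=\widehat{S\g^*}$ (Lemma \ref{module-structure}), and that by Proposition \ref{completion-of-cohomology} it is the $a$-adic completion of $H(\Omega_T(M),H_T)$. Since the $a$-adic completion of the Noetherian ring $S\g^*$ is flat, completion carries exact sequences to exact sequences and sends a finitely generated $S\g^*$-module supported on $\bigcup_H\frak h$ to one whose support lies in the corresponding union of formal subschemes; moreover, for compact $M$ the cohomologies in sight are finitely generated over $S\g^*$. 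Hence it suffices to prove the support statement for the kernel and cokernel of the restriction map $i^*$ on the \emph{uncompleted} module $H(\Omega_T(M),H_T)$ over the genuine polynomial ring $S\g^*$, whose closed points are the elements $\xi\in\g$.

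Fix $\xi\in\g$ with $\xi\notin\bigcup_{H\neq T}\frak h$ and show that $i^*$ becomes an isomorphism after localizing at the maximal ideal $\frak m_\xi$. Because $\xi$ lies in the isotropy algebra $\frak h_p$ exactly when the fundamental vector field $\xi_M$ vanishes at $p$, this choice guarantees that $\xi_M$ vanishes precisely along $M^T$. I would then write $M=U\cup W$, where $U$ is a $T$-invariant tubular neighborhood of $M^T$ retracting $T$-equivariantly onto $M^T$ and $W=M\setminus M^T$, so that $U\cap W=U\setminus M^T$ is fixed-point-free. The twisted equivariant Mayer--Vietoris sequence of \cite{HuUribe06}, together with homotopy invariance (which gives $H_T(U,H_T)\cong H_T(M^T,i^*H_T)$), places $i^*$ in a long exact sequence whose remaining terms are the cohomologies of $W$ and of $U\cap W$. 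As localization at $\frak m_\xi$ is exact, the problem reduces to showing that the twisted equivariant cohomology of a space $X$ on which $\xi_X$ is nowhere zero vanishes after localizing at $\frak m_\xi$.

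This vanishing is the heart of the matter. Using an invariant Riemannian metric I would form $\theta=\langle\xi_X,\cdot\rangle/|\xi_X|^2\in\Omega^1(X)^T$, so that $\iota_{\xi_X}\theta=1$, and consider $\beta:=d_{T,H_T}\theta=d\theta+d'\theta-H_T\wedge\theta$. Its $d'$-part is, up to sign, $\mu:=\sum_a x^a(\iota_{\xi_{a,X}}\theta)$, and since $\mu=1+\sum_a(x^a-\xi^a)(\iota_{\xi_{a,X}}\theta)$ it becomes invertible as a power series in the $(x^a-\xi^a)$ after $\frak m_\xi$-adic completion; the remaining term $d\theta-H_T\wedge\theta$ raises form degree and is therefore nilpotent, so $\beta=\pm\mu\bigl(1+\mu^{-1}(d\theta-H_T\wedge\theta)\bigr)$ is invertible by a finite geometric series. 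Multiplication by $\theta\beta^{-1}$ then gives a contracting homotopy for $d_{T,H_T}$, so the completed complex is acyclic, and by faithful flatness of completion over the local ring the localization $H(\Omega_T(X),H_T)_{\frak m_\xi}$ vanishes as well. I expect the main obstacle to be precisely the control of the twist term $H_T\wedge\theta$: in the untwisted theory $\beta$ is a single equivariant two-form, whereas here the correction is spread over several form degrees, and one must verify that it stays nilpotent and does not interfere with the power-series inversion of $\mu$, in the same spirit as the chain-homotopy operator $Q=KF(I+RF+(RF)^2+\cdots)$ of \eqref{chain-homotopy}. Feeding this vanishing back through the Mayer--Vietoris sequence shows $i^*_{\frak m_\xi}$ is an isomorphism; as $\xi$ ranges over $\g\setminus\bigcup_H\frak h$ the supports of $\ker i^*$ and $\coker i^*$ are forced into $\bigcup_H\frak h$, and Proposition \ref{completion-of-cohomology} transfers the conclusion to $H_T(M,H_T)$.
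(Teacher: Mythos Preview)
The paper does not give its own proof of this theorem: it is quoted verbatim from Hu--Uribe \cite{HuUribe06} and used as a black box, so there is nothing to compare your argument against inside the present paper. Your outline is the standard Atiyah--Bott/Berline--Vergne localization argument transported to the twisted Cartan complex, and that is indeed how the cited reference proceeds.

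One technical point deserves tightening. From the Leibniz-type identity
\[
d_{T,H_T}(\theta\wedge\eta)=(d_T\theta)\wedge\eta-\theta\wedge d_{T,H_T}\eta,
\]
(the $H_T\wedge\theta$ term cancels against $\theta\wedge H_T$ since both are odd), the element that must be inverted to build the contracting homotopy is $d_T\theta$, not $d_{T,H_T}\theta$. In other words, the twist does \emph{not} enter the ``$\beta$'' at all; the homotopy $K=\theta\,(d_T\theta)^{-1}\wedge$ works exactly as in the untwisted case, and the geometric-series inversion uses only the nilpotence of $d\theta$ and the invertibility of $\mu$ near $\xi$. Your anticipated obstacle---controlling the extra $H_T\wedge\theta$ contribution---therefore evaporates once the Leibniz rule above is written down correctly. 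With that correction the rest of your plan (reduction to the uncompleted complex via Proposition~\ref{completion-of-cohomology}, Mayer--Vietoris, and the tubular neighborhood of $M^T$) goes through.
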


\begin{remark}Let $\t^*$ be the dual space of the Lie algebra $t$ of $T$. Then $S\t^*=\C[x_1,x_2,\cdots,x_n]$ is a polynomial ring in $n$ variables.
The support of an $S\t^*$-module $A$ is defined to be $\text{supp}(A)=\bigcap_{\{f \in S\t^*\mid f\cdot A=0\}}V_f$, where
$V_f=\{x\in t\mid f(x)=0\}$. The support of $H_{T}^*(M,H_T)$ discussed in Theorem \ref{t--localizationI} is the support of
it as an $S\t^*$-module.
\end{remark}

                   \section{ GENERALIZED COMPLEX GEOMETRY}\label{generalized-complex-geometry}
  Let $V$ be an $n$ dimensional vector space. There is a natural bi-linear pairing
of type $(n, n)$ on $V\oplus V^*$ which is defined by
                            \[    \langle X +\alpha , Y +  \beta \rangle = \dfrac{1}{2}(\beta (X) +
                            \alpha(Y)).\]
A \textbf{generalized complex structure} on a vector space $V$ is an
orthogonal linear map $\mathcal{J}: V\oplus V^* \rightarrow V\oplus
V^*$ such that $\mathcal{J}^2=-1$. Let $L \subset V_{\C}\oplus
V^*_{\C}$ be the $\sqrt{-1}$ eigenspace of the generalized complex
structure $\mathcal{J}$. Then $L$ is maximal isotropic and $L \cap
\overline{L} = \{0\}$. Conversely, given a maximal isotropic
$L\subset V_{\C}\oplus V^*_{\C}$  so that  $L \cap \overline{L} =
\{0\}$, there exists an unique generalized complex structure
$\mathcal{J}$ whose $\sqrt{-1}$ eigenspace is exactly $L$.

 Let $\pi \colon V_\C \oplus V^*_\C  \to V_\C$ be the
natural projection. The {\bf type} of $\mathcal{J}$ is the
codimension of $\pi(L)$ in $V_\C$, where $L$ is the $\sqrt{-1}$
eigenspace of $\mathcal{J}$.


Let $\sigma$ be the linear map on $\wedge V^*$ which acts on
decomposables by
\begin{equation}\label{anti-auto1} \sigma (v_1\wedge v_2 \wedge
\cdots \wedge v_q)=v_q \wedge v_{q-1}\wedge \cdots \wedge
v_1,\end{equation} then we have the following bilinear pairing,
called the Mukai pairing \cite{Gua07}, defined on $\wedge V^*$:
\[ (\xi_1,\xi_2)=(\sigma(\xi_1)\wedge \xi_2)_{\text{top}}, \] where
$(\cdot,\cdot)_{\text{top}}$ indicates taking the top degree
component of the form.

Let $B \in \wedge^2 V^*$ be a two-form. For any $\xi \in \wedge
V^*$, throughout this paper we will denote by $e^B \xi$ the wedge
product $e^B\wedge \xi$. The following result was shown in
\cite[Prop. 1.12]{Gua07}.

\begin{proposition}\label{invariance-mukai-pairing}
\[ (e^B \xi_1, e^B\xi_2)=(\xi_1,\xi_2),\,\,\,\text{ for any
$\xi_1,\xi_2 \in \wedge V^*$},\] where $(\cdot,\cdot)$ denotes the
Mukai pairing.
\end{proposition}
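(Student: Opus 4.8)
The plan is to derive the invariance from two structural properties of the reversal operator $\sigma$ defined in \eqref{anti-auto1}. First I would observe that reversing the order of $q$ factors of a decomposable can be achieved by $\binom{q}{2}$ transpositions, so that $\sigma$ acts on $\wedge^q V^*$ simply as the scalar $(-1)^{q(q-1)/2}$. Second, I would check that $\sigma$ is a graded anti-automorphism of the exterior algebra, i.e. $\sigma(\alpha\wedge\beta)=\sigma(\beta)\wedge\sigma(\alpha)$ for all homogeneous $\alpha,\beta\in\wedge V^*$ (and hence for all, by bilinearity). Writing $p=\deg\alpha$ and $q=\deg\beta$, this reduces to the sign identity $\tfrac{(p+q)(p+q-1)}{2}=\tfrac{p(p-1)}{2}+\tfrac{q(q-1)}{2}+pq$; here the extra $pq$ precisely accounts for the Koszul sign incurred when commuting $\alpha$ past $\beta$.

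The key computation is then to evaluate $\sigma$ on the exponential $e^B$. Since $B\in\wedge^2 V^*$ we have $B^k\in\wedge^{2k}V^*$, and the scalar description gives $\sigma(B^k)=(-1)^{k(2k-1)}B^k=(-1)^k B^k$, because $2k^2-k\equiv k\pmod 2$. Summing the exponential series therefore yields $\sigma(e^B)=e^{-B}$. Combining this with the anti-automorphism property applied to $e^B\xi_1=e^B\wedge\xi_1$, I obtain $\sigma(e^B\xi_1)=\sigma(\xi_1)\wedge\sigma(e^B)=\sigma(\xi_1)\wedge e^{-B}$.

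With these in hand the proposition is immediate. By the definition of the Mukai pairing,
\[ (e^B\xi_1,e^B\xi_2)=\big(\sigma(e^B\xi_1)\wedge e^B\xi_2\big)_{\text{top}}=\big(\sigma(\xi_1)\wedge e^{-B}\wedge e^{B}\wedge\xi_2\big)_{\text{top}}. \]
Since forms of even degree commute in the exterior algebra, $e^{-B}\wedge e^{B}=e^{-B+B}=1$, so the middle factor drops out, leaving $\big(\sigma(\xi_1)\wedge\xi_2\big)_{\text{top}}=(\xi_1,\xi_2)$, as desired.

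The argument is essentially formal once the sign accounting is set up correctly, so the only real obstacle is the bookkeeping: one must verify carefully that $\sigma$ is a genuine anti-automorphism (the Koszul sign must match the reversal sign) and that $\sigma(e^B)=e^{-B}$ rather than $e^{B}$. Neither step presents a conceptual difficulty, the remainder being just the multiplicativity $e^{-B}\wedge e^{B}=1$, which holds because $B$ is of even degree.
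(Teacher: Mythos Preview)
Your argument is correct. The paper does not supply its own proof of this proposition; it simply cites \cite[Prop.~1.12]{Gua07}. Your computation is in fact the standard one found there: the two ingredients are precisely that $\sigma$ is an anti-automorphism of $\wedge V^*$ and that $\sigma(e^B)=e^{-B}$, from which the cancellation $e^{-B}\wedge e^B=1$ gives the result immediately. The sign bookkeeping you carry out (the identity $\tfrac{(p+q)(p+q-1)}{2}=\tfrac{p(p-1)}{2}+\tfrac{q(q-1)}{2}+pq$ and the parity $k(2k-1)\equiv k\pmod 2$) is accurate, so there is nothing to add.
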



The Clifford algebra of $V_{\C}\oplus V_{\C}^*$ acts on the space of
forms $\wedge V_{\C}^*$ via
\[(X+\xi)\cdot \varphi=\iota_X\varphi+\xi\wedge\varphi.\]

Since $\mathcal{J}$ is skew symmetric with respect to the natural
pairing on $V \oplus V^* $,  $\mathcal{J}\in so(V \oplus V^*) \cong
\wedge^2(V \oplus V^*)\subset CL (V \oplus V^*)$. Therefore there is
a Clifford action of $\mathcal{J}$  on the space of  forms $\wedge
V^*$  ( and so on its complexification $\wedge V^*_{\C}$) which
determines an alternative grading : $ \wedge
V_{\C}^*=\bigoplus_{\begin{subarray} k
\end{subarray}}U^k,$ where $U^k$ is the $-k\sqrt{-1}$ eigenspace of
the Clifford action of $\J$.

Note also that a classical result of Chevalley \cite{Che97} asserts
that
\[\{\varphi \in \wedge V_{\C}^*\mid (X+\xi)\cdot \varphi=0, X+\xi \in L\}\]
is a one dimensional vector space in the space of exterior forms
$\wedge V_{\C}^*$. Any form in this one dimensional subspace of
$\wedge V_{\C}^*$ is said to be a pure spinor form associated to the
generalized complex structure $\mathcal{J}$. On the other hand,
given a form $\varphi \neq 0 \in \wedge V_{\C}^*$, $L_{\varphi}:=\{
X+\xi \in V_{\C}\oplus V_{\C}^*\mid (X+\xi)\cdot \varphi=0\}$ is
always an isotropic space. If in addition,  $L_{\varphi}$ is maximal
isotropic, and if $(\varphi, \overline{\varphi})\neq 0$, then
$L_{\varphi}\cap \overline{L}_{\varphi}=\{0\}$ and there exists an
unique generalized complex structure $\mathcal{J}$ whose $\sqrt{-1}$
eigenspace is exactly $L_{\varphi}$.


Let $M$ be a manifold of dimension $n$. There is a natural pairing
of type $(n,n)$ which is defined on $TM\oplus T^*M$ by
\[ \langle X+\alpha, Y+\beta \rangle
=\dfrac{1}{2}\left(\beta(Y)+\alpha(X)\right),\] and which extends
naturally to $T_{\C}M\oplus T_{\C}^*M$.

For a closed three form $H$, the $H$-\textbf{twisted Courant
bracket} of $T_{\C}M\oplus T^*_{\C}M$ is defined by the identity
\[
[X+\xi,Y+\eta]=[X,Y]+L_X\eta-L_Y\xi-\dfrac{1}{2}d\left(\eta(X)-\xi(Y)\right)+\iota_Y\iota_XH.\]

A \textbf{generalized almost complex structure} on a manifold $M$ is
an orthogonal bundle map $\mathcal{J}:TM\oplus T^*M \rightarrow
TM\oplus T^*M$ such that $\mathcal{J}^2=-1$. Moreover, $\mathcal{J}$
is an $H$-\textbf{twisted generalized complex structure} if the
sections of the $\sqrt{-1}$ eigenbundle of $\mathcal{J}$ is closed
under the $H$-twisted Courant bracket. The {\bf type} of
$\mathcal{J}$ at $m \in M$ is the type of the restricted generalized
complex structure on $T_m M$.

The Clifford algebra of $C^{\infty}(TM\oplus T^*M)$ with the natural
pairing acts on differential forms by
\[ (X+\xi)\cdot \varphi=\iota_X\varphi+\xi\wedge \varphi.\]
If $L\subset T_{\C}M\oplus T^*_{\C}M$ is the $\sqrt{-1}$ eigenbundle
of a generalized complex structure $\J$, then the differential forms
annihilated by the Clifford action of the sections of $L$ span a line
bundle sitting inside $\wedge T^*_{\C}M$, which is called the
\textbf{canonical line bundle} of the generalized complex manifold
$(M,\J)$.

Since $\mathcal{J}$ can be identified with a smooth section of the
Clifford bundle $CL(TM\oplus T^*M)$, there is a  Clifford action of
$\mathcal{J}$ on the space of differential forms. Let $U^k$ be the
$-k\sqrt{-1}$ eigenbundle of $\mathcal{J}$. \cite{Gua07} shows that
there is a grading of the differential forms:
   \begin{equation}\label{alternative-grading}
       \Omega^*(M) =\Gamma( U^{-n})\oplus\cdots\oplus \Gamma(U^0) \oplus \cdots
  \oplus \Gamma(U^n) .\end{equation}

It has been shown (See e.g. \cite{Gua07} and \cite{KL04}) that the
integrability of an $H$-twisted generalized complex structure
$\mathcal{J}$ implies that
\[ d_H=d-H\wedge : \Gamma(U^k) \rightarrow \Gamma(U^{k-1})\oplus \Gamma(U^{k+1}),\]
 which gives rise to operators $\partial$ and $\bar{\partial}$
 via the projections
\[\partial: \Gamma(U^k)\rightarrow \Gamma(U^{k-1}),
 \,\,\,\bar{\partial}:\Gamma(U^k)\rightarrow \Gamma(U^{k+1}).\]


In this context, a twisted generalized complex manifold is said to
satisfy the $\bar{\partial}\partial$-lemma if and only if
\[ \text{ker}\partial \cap \text{im} \bar{\partial} =\text{im}\partial
\cap \text{ker} \bar{\partial} =\text{im}\bar{\partial}\partial.\]
Let $\Omega_{\overline{\partial}}(M)=\Omega(M)\cap \text{ker}
\overline{\partial}$. Since $d_H$ anti-commutes with
$\overline{\partial}$, $(\Omega_{\overline{\partial}}, d_H)$ is a
differential complex with the differential $d_H$. The following
result is a simple consequence of the
$\bar{\partial}\partial$-lemma.

\begin{proposition}(c.f., \cite[Thm. 4.1]{Ca06})\label{consequence-ddbar-lemma} If the generalized complex manifold
$(M, \mathcal{J})$ satisfies the $\bar{\partial}\partial$-lemma,
then the inclusion of the complex of $\overline{\partial}$-closed
differential forms $\Omega_{\overline{\partial}}(M)$ into the
complex of differential forms $\Omega(M)$ induces an isomorphism in
cohomology:
\[i: (\Omega_{\overline{\partial}}(M), d_H)\hookrightarrow (\Omega(M), d_H), \,\,\,i^*: H(\Omega_{\overline{\partial}}(M)) \cong H(M,H).\]

\end{proposition}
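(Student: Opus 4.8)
The plan is to treat this as a purely homological consequence of the $\overline{\partial}\partial$-lemma, in the spirit of the Deligne--Griffiths--Morgan--Sullivan formality argument (which is what \cite[Thm. 4.1]{Ca06} abstracts). First I would record the algebraic structure. Since $d_H=\partial+\overline{\partial}$ with $\partial\colon\Gamma(U^k)\to\Gamma(U^{k-1})$ and $\overline{\partial}\colon\Gamma(U^k)\to\Gamma(U^{k+1})$, the identity $d_H^2=0$ splits along the $U^\bullet$-grading into $\partial^2=0$, $\overline{\partial}^2=0$, and $\partial\overline{\partial}+\overline{\partial}\partial=0$. In particular $d_H$ anticommutes with $\overline{\partial}$, so $d_H$ preserves $\Omega_{\overline{\partial}}(M)=\ker\overline{\partial}$, and on $\ker\overline{\partial}$ it restricts to $\partial$. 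Thus the assertion is exactly that the inclusion induces $H(\Omega_{\overline{\partial}}(M))=H(\Omega_{\overline{\partial}}(M),\partial)\cong H(M,H)$.

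I would then reduce the isomorphism to two standard consequences of the lemma: (1) every $d_H$-closed form is $d_H$-cohomologous to a form in $\ker\partial\cap\ker\overline{\partial}$; and (2) a form $\gamma\in\ker\partial\cap\ker\overline{\partial}$ that is $d_H$-exact in $\Omega(M)$ lies in $\im(\partial\overline{\partial})$. Granting these, surjectivity of $i^*$ is immediate from (1): a pure representative $\gamma\in\ker\partial\cap\ker\overline{\partial}$ lies in $\Omega_{\overline{\partial}}(M)$ and satisfies $d_H\gamma=\partial\gamma=0$, so it defines a class of $H(\Omega_{\overline{\partial}}(M))$ mapping to the given $[\omega]$. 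For injectivity, if $\gamma\in\Omega_{\overline{\partial}}(M)$ is $\partial$-closed and $d_H$-exact in $\Omega(M)$, then by (2) $\gamma=\partial\overline{\partial}\eta$; setting $\eta'=\overline{\partial}\eta$ gives $\overline{\partial}\eta'=0$ and $d_H\eta'=\partial\eta'=\partial\overline{\partial}\eta=\gamma$, so $[\gamma]=0$ already in $H(\Omega_{\overline{\partial}}(M))$.

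Claim (2) is the short one and uses the lemma twice. Writing $\gamma=d_H\beta=\partial\beta+\overline{\partial}\beta$, the hypothesis $\partial\gamma=0$ forces $\partial\overline{\partial}\beta=0$, so $\overline{\partial}\beta\in\ker\partial\cap\im\overline{\partial}=\im(\partial\overline{\partial})$; substituting $\overline{\partial}\beta=\partial\overline{\partial}t$ yields $\gamma=\partial(\beta+\overline{\partial}t)\in\im\partial$. Since also $\gamma\in\ker\overline{\partial}$, the second equality of the lemma gives $\gamma\in\im\partial\cap\ker\overline{\partial}=\im(\partial\overline{\partial})$, as wanted.

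The main work, and the expected obstacle, is Claim (1), which I would prove by a zig-zag induction on the length of the $U^\bullet$-support of a $d_H$-closed $\omega=\sum_{k=a}^{b}\omega_k$. The top component satisfies $\overline{\partial}\omega_b=0$, and the degree-$(b-1)$ part of $d_H\omega=0$ reads $\partial\omega_b+\overline{\partial}\omega_{b-2}=0$, so $\partial\omega_b\in\ker\partial\cap\im\overline{\partial}=\im(\partial\overline{\partial})$; writing $\partial\omega_b=\partial\overline{\partial}\sigma$ with $\sigma\in\Gamma(U^{b-1})$, the corrected top $\widetilde\omega_b=\omega_b-\overline{\partial}\sigma$ becomes both $\partial$- and $\overline{\partial}$-closed. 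Replacing $\omega$ by the cohomologous $\omega-d_H\sigma$ changes only the components in degrees $b$ and $b-2$ and makes the top component equal to $\widetilde\omega_b$; subtracting off this now-pure top piece leaves a $d_H$-closed form supported in $[a,b-1]$, to which the induction applies, while the length-zero base case is automatically pure. The delicate points are the bookkeeping of which graded components the correction $d_H\sigma$ alters and the verification that the support length genuinely drops at each step so the induction terminates; everything else is repeated application of the two equalities furnished by the $\overline{\partial}\partial$-lemma.
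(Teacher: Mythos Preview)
Your argument is correct and is exactly the standard DGMS-style zigzag that Cavalcanti's cited result \cite[Thm.~4.1]{Ca06} carries out; the paper itself does not supply a proof but defers to that reference, so your approach coincides with the intended one. One minor remark on the induction bookkeeping you flagged: when $b-2<a$ the relation $\partial\omega_b+\overline{\partial}\omega_{b-2}=0$ already forces $\partial\omega_b=0$, so you may take $\sigma=0$ and the support length genuinely drops in every case.
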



 An $H$-\textbf{twisted generalized Calabi-Yau} structure $\J$ is a generalized
complex structure such that its canonical line bundle admits a
nowhere vanishing $d_H$-closed section $\rho$ . On the other hand,
given a nowhere vanishing $d_H$-closed differential form $\rho$ such
that $(\rho,\overline{\rho})\neq 0$, there exists a generalized
Calabi-Yau structure $\J$ such that $\rho$ is the pure spinor
associated to $\J$. By the abuse of the notation, we will also call
such a differential form $\rho$ a generalized Calabi-Yau structure.

Let $B$ be a closed two-form on a manifold M, and consider the
orthogonal bundle map defined by

\[ e^B=\left(\begin{matrix} 1 & 0\\ B&1 \end{matrix}\right): TM\oplus T^*M\rightarrow TM \oplus T^*M,\]
where $B$ is regarded as a skew-symmetric map from $TM$ to $T^*M$.
This map preserves the canonical pairing $<\cdot,\cdot>$ and the
$H$-twisted Courant bracket. As a consequence, if $\mathcal{J}$ is
an $H$-twisted generalized complex structure on M, then
$\mathcal{J}_B := e^B\mathcal{J} e^{-B}$ is another $H$-twisted
generalized complex structure on M, called the B-transform of
$\mathcal{J}$.


The effect of $B$-transforms on the canonical line bundle of a
generalized complex structure has been studied in \cite{Ca06}.

\begin{lemma}(\cite{Ca06} )\label{Btransformoperator} Let
$B$ be a closed two form and let $\mathcal{J}_B$ be the
$B$-transform of the generalized Calabi-Yau structure $\mathcal{J}$
with a nowhere vanishing closed pure spinor $\rho$. Then $\J_B$ is a
generalized Calabi-Yau structure with a nowhere vanishing closed
pure spinor $e^{-B}\rho$. \end{lemma}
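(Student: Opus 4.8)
The plan is to unpack the statement into three separate claims about $e^{-B}\rho$ --- that it is a pure spinor for $\J_B$, that it is nowhere vanishing with nonzero Mukai pairing, and that it is $d_H$-closed --- and to verify each in turn. Since $e^B$ preserves the $H$-twisted Courant bracket, $\J_B$ is again an $H$-twisted generalized complex structure for the \emph{same} three form $H$, so the relevant differential is the same $d_H=d-H\wedge$. The organizing principle is that the orthogonal bundle map $e^B$ on $TM\oplus T^*M$ lifts to a fiberwise invertible operator on forms and intertwines the Clifford actions adapted to $\J$ and to $\J_B$.

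First I would record the \emph{spinorial equivariance} of the $B$-transform. Working with the Clifford action $(X+\xi)\cdot\varphi=\iota_X\varphi+\xi\wedge\varphi$, a direct computation inside the Clifford bundle shows that $e^B$, viewed as an invertible even element of the Clifford bundle covering the orthogonal map $e^B$, acts on forms by $\varphi\mapsto e^{-B}\varphi$, and that for every section $w$ of $TM\oplus T^*M$ and every form $\varphi$,
\[(e^B w)\cdot(e^{-B}\varphi)=e^{-B}\wedge(w\cdot\varphi).\]
The sign in $e^{-B}$ is forced by the normalization $\langle X+\xi,Y+\eta\rangle=\frac{1}{2}(\xi(Y)+\eta(X))$ of the pairing and is exactly the point requiring care. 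Now the $\sqrt{-1}$ eigenbundle of $\J_B=e^B\J e^{-B}$ is $L_B=e^B(L)$, where $L$ is the $\sqrt{-1}$ eigenbundle of $\J$, since $\J_B(e^B v)=e^B\J v=\sqrt{-1}\,e^B v$ for $v\in L$. As $\rho$ is annihilated by the Clifford action of $L$, the displayed identity with $w\in L$ and $\varphi=\rho$ gives $(e^B w)\cdot(e^{-B}\rho)=e^{-B}\wedge(w\cdot\rho)=0$. Letting $e^B w$ range over $L_B$, this proves that $e^{-B}\rho$ is annihilated by $L_B$, i.e. it is a pure spinor associated to $\J_B$.

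Next, $e^{-B}\rho$ is nowhere vanishing because $\rho$ is and because wedging with $e^{-B}$ is a fiberwise invertible operator on $\wedge T^*_{\C}M$, with inverse wedging by $e^{B}$. To see that $e^{-B}\rho$ genuinely defines a generalized Calabi-Yau structure (equivalently that $L_{e^{-B}\rho}\cap\overline{L}_{e^{-B}\rho}=\{0\}$), I would invoke Proposition \ref{invariance-mukai-pairing}: since $B$ is real we have $\overline{e^{-B}\rho}=e^{-B}\overline{\rho}$, and applying the $B$-invariance of the Mukai pairing to $-B$ gives $(e^{-B}\rho,\overline{e^{-B}\rho})=(\rho,\overline{\rho})\neq 0$.

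Finally, closedness, where the hypothesis $dB=0$ enters decisively. Because $B$ is closed, each power $B^k$ is closed, hence $d\big(e^{-B}\big)=0$; using $d_H\rho=0$, i.e. $d\rho=H\wedge\rho$, together with the fact that the even form $e^{-B}$ commutes with the three form $H$, we compute
\[d_H\big(e^{-B}\rho\big)=e^{-B}\wedge d\rho-H\wedge e^{-B}\rho=e^{-B}\wedge H\wedge\rho-H\wedge e^{-B}\rho=0.\]
Thus $e^{-B}\rho$ is a nowhere vanishing $d_H$-closed pure spinor for $\J_B$, which is precisely the assertion that $\J_B$ is a generalized Calabi-Yau structure with pure spinor $e^{-B}\rho$. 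The main obstacle is the second step: obtaining the spinorial equivariance with the correct sign, which rests on a careful Clifford-algebra identification of the lift of $e^B$ to the spin group under this paper's pairing convention; once that is in place the remaining steps are short formal manipulations.
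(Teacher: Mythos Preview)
The paper does not supply its own proof of this lemma; it is stated with a citation to \cite{Ca06} and used as a black box. So there is nothing to compare against, only correctness to assess.

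Your argument is correct. The key spinorial equivariance identity $(e^B w)\cdot(e^{-B}\varphi)=e^{-B}\wedge(w\cdot\varphi)$ follows, with the paper's convention $e^B(X+\xi)=X+\iota_X B+\xi$, from the computation $\iota_X(e^{-B}\varphi)=(-\iota_X B)\wedge e^{-B}\varphi+e^{-B}\wedge\iota_X\varphi$, which cancels the extra $\iota_X B$ term exactly as you need; your sign is right. The nowhere-vanishing and Mukai-pairing steps are fine (Proposition \ref{invariance-mukai-pairing} applied to $-B$), and the $d_H$-closedness step is a clean application of $d(e^{-B})=0$ together with commutativity of the even form $e^{-B}$ with $H$. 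Your reading of ``closed'' as $d_H$-closed is the correct one in this paper's $H$-twisted setting, since $e^B$ with $dB=0$ preserves the $H$-twisted Courant bracket for the \emph{same} $H$; the cited source may well have stated only the untwisted case, but your version is what the paper actually needs and uses.
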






\begin{example} (\cite{H02}, \cite{Gua07})\label{symplectic-g-calabi-Yau}
\begin{itemize}
\item Let $(V, \omega)$ be a $2n$ dimensional symplectic vector
space. Then the map $\mathcal{J}:V\oplus V^*\rightarrow V \oplus
V^*$ defined by
\begin{equation} \label{symplectic}
\mathcal{J}=\left(\begin{matrix} 0 & -\omega^{-1} \\\omega
&0\\\end{matrix} \right)\end{equation} is a generalized complex
structure on $V$.

\item Now let $(M,\omega)$ be a  $2n$ dimensional symplectic manifold.
 Then (\ref{symplectic}) defines
a generalized complex structure $\J_{\omega}$ with the $\sqrt{-1}$
eigenbundle $L=\{X-\sqrt{-1}\iota_X\omega \,\vert\, X \in
T_{\C}M\}$. It is easy to check that the canonical line bundle
admits a nowhere vanishing closed section $e^{i\omega}$. And so
$\J_{\omega}$ is a generalized Calabi-Yau structure. On the other
hand, the following lemma shows that any nowhere vanishing closed
section of the canonical line bundle of $\J_{\omega}$ must be of the
form $ae^{i\omega}$ for some non-zero constant $a$.
\end{itemize} \end{example}

\begin{lemma}\label{symplectic-g-calabi-yau} Let $(M,\omega)$ be a $2n$ dimensional connected symplectic manifold
and $\J_{\omega}$ the generalized complex structure defined as in
(\ref{symplectic}). Suppose $\rho$ is a nowhere vanishing closed
pure spinor associated to the generalized Calabi-Yau structure
$\J_{\omega}$. Then we have $\rho=a e^{i\omega}$ for some non-zero
constant $a$. \end{lemma}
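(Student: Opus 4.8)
The plan is to exploit the fact that $e^{i\omega}$ is itself a nowhere vanishing section of the canonical line bundle, so that $\rho$ must be a function multiple of it, and then to let the closedness of $\rho$ force that function to be constant. First I would note that, by definition, $\rho$ is a nowhere vanishing section of the canonical line bundle of $\J_{\omega}$, and by Example \ref{symplectic-g-calabi-Yau} so is $e^{i\omega}$. By Chevalley's theorem quoted in Section \ref{generalized-complex-geometry}, this canonical line bundle is a (complex) line bundle, so its nowhere vanishing section $e^{i\omega}$ trivializes it. Hence there is a unique nowhere vanishing smooth function $f\colon M\to \C$ with
\[ \rho = f\, e^{i\omega}. \]

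Next I would compute $d\rho$. Since $\omega$ is closed, $d(e^{i\omega}) = i\, d\omega\wedge e^{i\omega}=0$, so the Leibniz rule gives $d\rho = df\wedge e^{i\omega}$. The hypothesis that $\rho$ is closed then yields $df\wedge e^{i\omega}=0$. To extract $df$ from this, I would use that $\omega$ is an even form, so $i\omega$ and $-i\omega$ commute under the wedge product and $e^{i\omega}\wedge e^{-i\omega}=e^{0}=1$; wedging the identity $df\wedge e^{i\omega}=0$ on the right with $e^{-i\omega}$ then gives $df=0$. (Equivalently, one may simply observe that the degree-one component of $df\wedge e^{i\omega}$ is exactly $df$, since the degree-zero part of $e^{i\omega}$ is $1$.) Therefore $f$ is locally constant, and since $M$ is connected $f\equiv a$ for a single constant $a$, with $a\neq 0$ because $\rho$ is nowhere vanishing. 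This gives $\rho = a\, e^{i\omega}$, as claimed.

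The computation itself is elementary; the only step requiring care is the reduction to $\rho = f\, e^{i\omega}$, which rests on the one-dimensionality of the space of pure spinors at each point together with the fact that $e^{i\omega}$ never vanishes and so furnishes a global frame for the canonical line bundle. Once this identification is in place, the closedness of $\rho$ and of $\omega$ do all of the remaining work.
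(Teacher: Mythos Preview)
Your proof is correct and actually more elementary than the paper's. Both arguments begin identically by writing $\rho = f\,e^{i\omega}$ and deducing $df\wedge e^{i\omega}=0$ from closedness. The divergence is in how one extracts $df=0$ from this: the paper reads off the degree $2n-1$ component, obtaining $(i\omega)^{n-1}\wedge df=0$, and then invokes the hard Lefschetz isomorphism $L_\omega^{n-1}\colon\Omega^1(M)\to\Omega^{2n-1}(M)$ to conclude $df=0$. You instead either read off the degree-one component directly (which is just $df$, since $e^{i\omega}$ has constant term $1$) or, equivalently, multiply through by the inverse $e^{-i\omega}$ in the exterior algebra. Your route avoids the Lefschetz map entirely and works for the same reason that any unit in a commutative ring is not a zero-divisor; the paper's route is a slight detour through a deeper symplectic fact that is not actually needed here.
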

\begin{proof} By assumption we have $\rho= f e^{i \omega}$ for some nowhere vanishing function
$f \in C^{\infty}(M)$. Since $d\rho=0$, we get $df \wedge
e^{i\omega}=0$. In particular, we have
\[ (i\omega)^{n-1} \wedge df =0.\]
It is well known that the Lefschetz map \[
L_{\omega}^{n-1}:\Omega^1(M) \rightarrow \Omega^{2n-1}(M),\,\,\,
\eta \mapsto \omega^{n-1}\wedge \eta\] is an isomorphism, c.f.,
\cite[Corollary 2.7]{yan;hodge-structure-symplectic}. It follows
that $df=0$ and so $f=a$ for some non-zero constant $a$. This
finishes the proof. \end{proof}

It is important for our purpose to notice that given a $2n$
dimensional $H$-twisted generalized Calabi-Yau manifold $M$, the
generalized Calabi-Yau structure $\rho$ on $M$ gives rise to a
volume form
\begin{equation}\label{volume-form} \frac{(-1)^n}{(2i)^n}(\rho,\overline{\rho}),\end{equation} where
$(\cdot,\cdot)$ denotes the Mukai pairing. Note that if $M$ is a
symplectic manifold with a symplectic form $\omega$ and the
generalized Calabi-Yau structure is given by $e^{i\omega}$, then up
to a normalization factor the volume form we defined above coincides
with the symplectic volume $\frac{1}{n!}\omega^n$.

A manifold $M$ is said to be an $H$-twisted generalized K\"ahler
manifold if it has two commuting $H$-twisted generalized complex
structures $\J_1$, $\J_2$ such that $\langle -\J_1\J_2\xi,\xi\rangle
>0$ for any $\xi \neq 0 \in C^{\infty}(T_{\C}M \oplus  T^*_{\C}M)$,
where $\langle\cdot,\cdot \rangle$ is the canonical pairing on
$T_{\C}M \oplus  T^*_{\C}M$. The following remarkable result is due
to Gualtieri.

\begin{theorem}\label{Gua'slemma}  (\cite{Gua04})  Assume that $(M,\J_1,\J_2)$ is a
  compact $H$-twisted generalized K\"ahler manifold. Then it
   satisfies the $\bar{\partial}\partial$-lemma
with respect to both $\J_1$ and $\J_2$.
\end{theorem}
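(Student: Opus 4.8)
The plan is to adapt the classical Hodge-theoretic proof of the $\overline{\partial}\partial$-lemma on compact K\"ahler manifolds to the generalized setting, the point being that the generalized K\"ahler condition supplies exactly the positivity and compatibility needed to run the Hodge theory. The generalized K\"ahler data give a positive definite generalized metric $G=-\J_1\J_2$ on $T_{\C}M\oplus T^*_{\C}M$, and since $\J_1$ and $\J_2$ commute they can be simultaneously diagonalized. My first step is therefore to refine the grading (\ref{alternative-grading}) coming from $\J_1$ into a bigrading
\[ \Omega^*(M)\otimes\C=\bigoplus_{p,q}\Gamma(U^{p,q}), \]
where $U^{p,q}$ is the intersection of the $-p\sqrt{-1}$-eigenbundle of $\J_1$ with the $-q\sqrt{-1}$-eigenbundle of $\J_2$. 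This is the generalized analogue of the decomposition into $(p,q)$-forms, and the metric $G$ turns each $\Gamma(U^{p,q})$ into a Hilbert space with a well-defined formal adjoint operation.

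Next I would decompose the twisted differential $d_H$ with respect to this bigrading. One checks that $d_H=\delta_++\delta_-+\bar\delta_++\bar\delta_-$, where the four summands move the bidegree diagonally (by $(1,1),(1,-1),(-1,1),(-1,-1)$), each squares to zero, and they satisfy the expected anticommutation relations. Grouping the two components that lower the first index and the two that raise it recovers the operators $\partial$ and $\bar\partial$ attached to $\J_1$ defined via the projections following (\ref{alternative-grading}); regrouping instead according to the second index recovers the operators $\partial,\bar\partial$ attached to $\J_2$. Thus all four of the relevant operators are expressed in terms of the single quadruple $\delta_\pm,\bar\delta_\pm$, and the integrability built into each $\J_i$ guarantees $d_H^2=0$.

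The technical heart is to establish the generalized K\"ahler identities. Using $G$ and the generalized Hodge star I would compute the formal adjoints $\delta_\pm^*,\bar\delta_\pm^*$ and show that the mixed anticommutators vanish, yielding the equality of Laplacians
\[ \Delta_{\partial_{\J_1}}=\Delta_{\bar\partial_{\J_1}}=\Delta_{\partial_{\J_2}}=\Delta_{\bar\partial_{\J_2}}=\tfrac12\,\Delta_{d_H}, \]
in exact parallel with $\Delta_d=2\Delta_\partial=2\Delta_{\bar\partial}$ in the ordinary K\"ahler case. This is where the commutativity of $\J_1,\J_2$ and the positivity of $G$ are used decisively: the cross terms in the Laplacians cancel precisely because the four operators interlock as the components of two compatible complex structures, and positivity of $G$ guarantees that $\Delta_{d_H}$ is a genuine (elliptic) Laplacian rather than a formal object.

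Finally, since $M$ is compact and $\Delta_{d_H}$ is elliptic, Hodge theory gives a unique harmonic representative in each $d_H$-cohomology class. Because all the Laplacians above coincide, the space of harmonic forms is simultaneously the joint kernel of $\partial,\bar\partial,\partial^*,\bar\partial^*$; a harmonic form is therefore annihilated by each of these operators at once. The standard diagram-chase from the K\"ahler proof then shows $\ker\partial\cap\im\bar\partial=\im\partial\cap\ker\bar\partial=\im\bar\partial\partial$ for $\J_1$, and the symmetric argument applies verbatim to $\J_2$. I expect the main obstacle to be the third step: proving the generalized K\"ahler identities for the twisted operators is considerably more delicate than in the classical case, because the closed three-form $H$ enters the differential and the forms carry a Clifford-module structure, so the adjoint computations and the verification that the cross terms cancel must be carried out with genuine care rather than cited from the Hermitian linear algebra of ordinary K\"ahler geometry.
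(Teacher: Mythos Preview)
The paper does not actually prove this statement: it is quoted as a result of Gualtieri \cite{Gua04} and used as a black box, with no argument supplied in the text. So there is no ``paper's own proof'' to compare against here.

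That said, your outline is essentially the argument Gualtieri gives in the cited reference. The simultaneous diagonalization of the commuting structures $\J_1,\J_2$ yields the bigrading $\Gamma(U^{p,q})$, the twisted differential $d_H$ breaks into four diagonal pieces $\delta_{\pm},\bar\delta_{\pm}$, the generalized K\"ahler identities force the various Laplacians to coincide up to a factor, and ellipticity plus compactness then runs the standard $\bar\partial\partial$-lemma diagram chase. Your identification of the generalized K\"ahler identities as the delicate step is accurate: in Gualtieri's treatment this is handled via the Born--Infeld metric on forms induced by the generalized metric $G=-\J_1\J_2$, which makes the adjoint computation tractable and shows that the relevant Laplacian is genuinely elliptic. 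If you were to write this out in full you would want to be explicit about which inner product on $\Omega(M)$ you are using (it is not the naive one coming from a Riemannian metric alone, but the one adapted to the spinor picture), since that choice is what makes the adjoints of $\delta_{\pm}$ line up correctly with $\bar\delta_{\mp}$.
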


\section{The twisted equivariant cohomology of Hamiltonian  actions on twisted generalized complex manifolds }
\label{t-eq-theory-t-complex}

First we recall the definition of Hamiltonian actions on
$H$-twisted generalized complex manifolds given in \cite{LT05}.

\begin{definition}\footnote{Indeed, Condition (b) was not imposed
in \cite[Definition A.2.]{LT05}. However, in order to make the quotient construction work,
Tolman and the author made it clear in \cite[Prop. A.7, A.10]{LT05} that $H+\alpha$ must be equivariantly closed in the usual
Cartan model.}
\cite{LT05}) \label{deftmm}
Let a compact Lie group $G$ with Lie algebra $\g$ act on a manifold
$M$, preserving an $H$-twisted generalized complex structure
$\mathcal{J}$, where $H \in \Omega^3(M)^G$ is closed. Let $L \subset
T_\C M \oplus T^*_\C M$ denote the $\sqrt{-1}$ eigenbundle of $\J$.
The action of $G$ is said to be Hamiltonian if there exists a smooth
equivariant function $\mu:M \rightarrow \frak{g}^*$,  called the
{\bf generalized moment map}, and a one form $\alpha \in
\Omega^1(M,\g^*)$, called the {\bf moment one form},  so that
\begin{itemize}
\item [a)]
$\xi_M - \newi (d\mu^\xi+  \newi \alpha^\xi)$ lies in
$C^{\infty}(L)$ for all $\xi \in \g$, where $\xi_M$ denotes the
induced vector field.
\item [b)] $H+\alpha$ is an equivariantly closed three form in the
usual Cartan Model.
\end{itemize}
\end{definition}


Therefore given a Hamiltonian action on an $H$-twisted
generalized complex manifold, there is an equivariantly closed three
form $H_G:=H+\alpha$ which is part of
the data defining the Hamiltonian action.


Next we recall the definition of the generalized equivariant cohomology
as introduced in \cite{Lin06} and compare it with the twisted equivariant cohomology $H_G(M,H_G)$.
First observe that since $H$ is an invariant three form, the operators $d_H=d-H\wedge$,
$\overline{\partial}$, and $\partial$ are $G$-equivariant and thus
have natural extensions to the space of the usual equivariant
differential forms, i.e., the Cartan model $\Omega_G$. For brevity,
we will also denote these extensions by $d_H$, $\overline{\partial}$
and $\partial$. To encode the information given by the moment one
form, \cite{Lin06} introduced two equivariant
differentials $D_G=d_H+\mathcal{A}$ and
$\overline{\partial}_G=\overline{\partial}+\mathcal{A}$ in the
Cartan model, where $\mathscr{A}$ is defined by
\begin{equation} \label{horizontal-differential}
(\mathcal{A}\gamma)(\xi)=-\iota_{\xi_M}\gamma(\xi)+\sqrt{-1}(d\mu^{\xi}+\sqrt{-1}\alpha^{\xi})\wedge
\gamma(\xi),\,\,\, \xi \in \frak{g}, \,\,\,\gamma \in
\Omega_G.\end{equation} It is easy to check that $D_G^2=0$ and
$\overline{\partial}_G^2=0$. In particular, we have the following definition
for the generalized equivariant cohomology.
\begin{definition}
 Let $\Omega_G=(S\g^* \otimes \Omega(M))^G$ be $Z_2$ graded. Then
$D_G=d_H+\mathscr{A}$ is a differential of degree $1$. And the $Z_2$
graded \textbf{generalized equivariant cohomology} is defined to be
\[ H^{\text{even/odd}}(\Omega_G,
D_G)=\dfrac{\text{ker}\left(\Omega_G^{\text{even/odd}} \xrightarrow{
D_G}
\Omega_G^{\text{odd/even}}\right)}{\text{im}\left(\Omega_G^{\text{odd/even}}
\xrightarrow{ D_G} \Omega_G^{\text{even/odd}}\right)}.\]
\end{definition}

Note that the differential operator $D_G$ on $\Omega_G$ extends naturally to a differential operator
on $\widehat{\Omega}_G$, which we will also denote by $D_G$. Define
\[H(\widehat{\Omega}_G,
D_G)=\dfrac{\text{ker}\left(\widehat{\Omega}_G \xrightarrow{
D_G}
\widehat{\Omega}_G\right)}{\text{im}\left(\widehat{\Omega}_G
\xrightarrow{ D_G} \widehat{\Omega}_G\right)}.
\]
By the discussion in Section \ref{Twisted equivariant cohomology},
it is clear that the differential operator $D_G$ on $\widehat{\Omega}_G$ is nothing
but the the usual equivariant differential $d_G$ twisted by the equivariantly closed
three form $H_G-\newi d_G \mu$. (Note that $d\mu=d_G\mu$ is an exact equivariant three form.) The
following result is a simple consequence of Lemma
\ref{basic-facts-twisted-equivariant-cohomology}.

\begin{proposition} \label{equivalence }Consider the Hamiltonian action of a compact
connected Lie group $G$ on an $H$-twisted generalized complex
manifold with generalized moment map $\mu:M\rightarrow \frak{g}^*$.
Then the map
\[H_G(M,H_G)\rightarrow H(\widehat{\Omega}_G,
D_G), \,\,\, [\gamma]\mapsto [\exp(-\newi \mu)\gamma]\] defines an
isomorphism between the $H_G$-twisted equivariant cohomology
$H_G(M,H_G)$ and the generalized equivariant cohomology.
\end{proposition}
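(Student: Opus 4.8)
The plan is to recognize the claimed isomorphism as a direct instance of Lemma \ref{basic-facts-twisted-equivariant-cohomology}, once the differential $D_G$ has been rewritten as a twisted equivariant de Rham differential on the completed Cartan model. First I would unwind the definition $D_G = d_H + \mathcal{A}$ on $\widehat{\Omega}_G$. In (\ref{horizontal-differential}) the term $-\iota_{\xi_M}\gamma(\xi)$ is precisely the horizontal differential $d'$, while the remaining summand is wedging by $\newi(d\mu^\xi + \newi\alpha^\xi) = \newi\, d\mu^\xi - \alpha^\xi$. Since $d + d' = d_G$ and $d_H = d - H\wedge$, collecting terms gives
\[ D_G = d_G - (H+\alpha)\wedge + \newi\, d\mu\wedge = d_G - \left(H_G - \newi\, d_G\mu\right)\wedge, \]
where I use $H_G = H + \alpha$ and the fact that $d_G\mu = d\mu$ because $\mu$ is a function-valued equivariant form whose horizontal differential vanishes. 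Thus $D_G$ is exactly the $(H_G - \newi\, d_G\mu)$-twisted equivariant differential $d_{G,H_G - \newi d_G\mu}$, and by definition $H(\widehat{\Omega}_G, D_G) = H_G(M, H_G - \newi\, d_G\mu)$. This identification is already recorded in the paragraph preceding the statement.

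Next I would note that, viewed in the Cartan model as $\xi \mapsto \mu^\xi$, the moment map $\mu$ is an equivariant two-form, so that $\lambda := -\newi\,\mu$ is an equivariant two-form and $H_G - \newi\, d_G\mu = H_G + d_G\lambda$ differs from $H_G$ by the exact equivariant three-form $d_G\lambda$. Here it is worth stressing that $\exp(\lambda) = \exp(-\newi\,\mu)$ is a genuine formal power series in the generators of $S\g^*$ (since $\mu^\xi$ is linear in $\xi$), so it lies in $\widehat{\Omega}_G$ but not in $\Omega_G$; this is precisely why both cohomologies must be built from the completed Cartan model and why the map is phrased on $\widehat{\Omega}_G$.

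Finally I would apply Lemma \ref{basic-facts-twisted-equivariant-cohomology} with this $\lambda$. It yields an isomorphism $H_G(M, H_G) \to H_G(M, H_G + d_G\lambda)$ sending $[\gamma]$ to $[\exp(\lambda)\gamma] = [\exp(-\newi\,\mu)\gamma]$, and composing with the definitional identity $H_G(M, H_G + d_G\lambda) = H(\widehat{\Omega}_G, D_G)$ from the first step produces exactly the asserted map and isomorphism. The argument is essentially formal; the only points demanding care are the bookkeeping in the first step — that the extra first-order term of $\mathcal{A}$ recombines with $d_H$ and $d'$ into $d_G$ minus wedging by the single equivariantly closed form $H_G - \newi\, d_G\mu$ — and the verification that $\lambda = -\newi\,\mu$ is an admissible exact perturbation living in $\widehat{\Omega}_G$, so that Lemma \ref{basic-facts-twisted-equivariant-cohomology} genuinely applies. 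I do not anticipate any deeper obstacle.
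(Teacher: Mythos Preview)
Your proposal is correct and follows exactly the paper's approach: the paper states just before the proposition that $D_G$ is the equivariant differential twisted by $H_G-\newi\,d_G\mu$, and then declares the result ``a simple consequence of Lemma~\ref{basic-facts-twisted-equivariant-cohomology}''. You have merely unpacked these two sentences in detail, including the helpful remark that $\exp(-\newi\,\mu)$ only lives in the completed model $\widehat{\Omega}_G$.
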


Recall that the following result is proved in \cite{Lin06}, which is
a direct generalization of an immediate consequence of
the $d_G\delta$-lemma \cite{LS03} in symplectic geometry.

\begin{theorem}\label{ddbarlemma} If we assume that the $H$-twisted generalized complex manifold $M$ satisfies
the $\bar{\partial}\partial$-lemma, and assume the action of a
compact connected Lie group $G$ on $M$ is Hamiltonian, then as $(S\frak{g}^*)^G$-modules, $H(\Omega_G, D_G)\cong (S\g^*)^G\otimes
H(M, H)$. \end{theorem}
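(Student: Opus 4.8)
The plan is to realize $(\Omega_G,D_G)$ as a filtered complex and to run the associated spectral sequence, proving that it degenerates at the first page. Since $D_G=d_H+\mathcal{A}$, and each of the two terms defining $\mathcal{A}$ in \eqref{horizontal-differential} carries an extra factor linear in $\xi$ (the contraction $\iota_{\xi_M}$ and the wedge by $\sqrt{-1}(d\mu^\xi+\sqrt{-1}\alpha^\xi)$), the operator $d_H=1\otimes d_H$ preserves the polynomial degree in $S\g^*$ while $\mathcal{A}$ raises it by one. Hence the decreasing filtration $F^p\Omega_G=(S^{\geq p}\g^*\otimes\Omega(M))^G$ is $D_G$-stable, and on the associated graded the induced differential is exactly $d_H$. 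Therefore $E_1=(S\g^*\otimes H(M,H))^G$. Because $G$ is compact and connected it acts trivially on $H(M,H)$: the infinitesimal action of $\xi\in\g$ is given on cohomology by $L_{\xi_M}=\{d_H,\iota_{\xi_M}\}+(\iota_{\xi_M}H)\wedge$, and condition (b) of Definition \ref{deftmm} forces $\iota_{\xi_M}H=d\alpha^\xi$, so $(\iota_{\xi_M}H)\wedge$ is $d_H$-exact and $L_{\xi_M}$ acts as zero. Consequently $E_1\cong (S\g^*)^G\otimes H(M,H)$, which is already the conjectured answer.

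It remains to show the spectral sequence collapses at $E_1$, i.e. that all differentials $d_r$, $r\geq 1$, vanish; this is the equivariant $\overline{\partial}\partial$-lemma and is the heart of the matter. The differential $d_1$ sends the class of a $d_H$-closed $\gamma$ to the class of $\mathcal{A}\gamma$ in $H(M,H)$. Using Proposition \ref{consequence-ddbar-lemma} I would represent every class by $\gamma$ with $\overline{\partial}\gamma=0$, so that $\partial\gamma=\overline{\partial}\gamma=0$. Since $\mathcal{A}$ is Clifford multiplication by the section $-\bigl(\xi_M-\sqrt{-1}(d\mu^\xi+\sqrt{-1}\alpha^\xi)\bigr)$ of the isotropic bundle $L$ (condition (a) of Definition \ref{deftmm}), the Clifford relation and isotropy give $\mathcal{A}^2=0$; together with $\overline{\partial}_G^2=0$ and $D_G^2=0$ this yields the anticommutation relations $\{\partial,\mathcal{A}\}=\{\overline{\partial},\mathcal{A}\}=0$. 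Hence $\partial(\mathcal{A}\gamma)=-\mathcal{A}\partial\gamma=0$ and $\overline{\partial}(\mathcal{A}\gamma)=-\mathcal{A}\overline{\partial}\gamma=0$, so $\mathcal{A}\gamma$ is again $\partial$- and $\overline{\partial}$-closed. The $\overline{\partial}\partial$-lemma identifies the $d_H$-exact forms inside this doubly closed space with $\im(\overline{\partial}\partial)$ (note $\overline{\partial}\partial\beta=d_H(\partial\beta)$), so the task is to produce a $\overline{\partial}$-closed primitive $\eta$ with $\partial\eta=\mathcal{A}\gamma$; this shows $d_1=0$. One then iterates: correct $\gamma$ by the primitive just found, re-apply the $\overline{\partial}\partial$-lemma, and lift every $E_1$-class to an honest $D_G$-closed element, killing all higher $d_r$ simultaneously.

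The main obstacle is exactly this degeneration, namely establishing that $\mathcal{A}\gamma\in\im(\overline{\partial}\partial)$ at each stage. The subtlety is that $\overline{\partial}_G=\overline{\partial}+\mathcal{A}$ is not homogeneous for the polynomial grading, so $(\partial,\overline{\partial}_G)$ is not a genuine bicomplex and one cannot simply invoke the Deligne--Griffiths--Morgan--Sullivan formality principle; instead one must feed the non-equivariant $\overline{\partial}\partial$-lemma into a degree-by-degree induction and check that the corrections introduced by the moment map $\mu$ and the moment one-form $\alpha$ remain in $\im(\overline{\partial}\partial)$. This is precisely where conditions (a) and (b) of Definition \ref{deftmm} enter decisively, controlling how the $L$-Clifford operator $\mathcal{A}$ interacts with $\partial$ and $\overline{\partial}$.

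Finally I would assemble the conclusion. Convergence is not an issue: in each bidegree $(p,q)$ (polynomial degree $p$, form degree $q\leq\dim M$) the differentials $d_r$ vanish once $r>\dim M+1$, since they would push $q$ outside $[0,\dim M]$, so the spectral sequence converges. Degeneration at $E_1$ then gives an isomorphism of graded $(S\g^*)^G$-modules $\operatorname{gr}H(\Omega_G,D_G)\cong E_1=(S\g^*)^G\otimes H(M,H)$. Since the right-hand side is a free, hence projective, $(S\g^*)^G$-module, the filtration on $H(\Omega_G,D_G)$ splits and the graded isomorphism lifts to an isomorphism of $(S\g^*)^G$-modules $H(\Omega_G,D_G)\cong(S\g^*)^G\otimes H(M,H)$, as claimed.
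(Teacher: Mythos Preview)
Your overall architecture matches the approach of \cite{Lin06} (to which the paper defers for the proof): filter $\Omega_G$, identify $E_1\cong(S\g^*)^G\otimes H(M,H)$, and argue degeneration by iteratively correcting a doubly closed representative via the $\overline\partial\partial$-lemma. The anticommutation relations $\{\partial,\mathcal{A}\}=\{\overline\partial,\mathcal{A}\}=0$ and $\mathcal{A}^2=0$ are correct and are obtained exactly as you indicate, and your argument that $G$ acts trivially on $H(M,H)$ is the same as the paper's.

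There is, however, a genuine gap at the step you yourself flag as ``the main obstacle''. You correctly reduce $d_1=0$ to showing $\mathcal{A}\gamma\in\im(\overline\partial\partial)$ for $\gamma$ chosen $\partial$- and $\overline\partial$-closed; but the $\overline\partial\partial$-lemma does \emph{not} deliver this from $\mathcal{A}\gamma\in\ker\partial\cap\ker\overline\partial$ alone. One needs $\mathcal{A}\gamma$ to lie in $\im\partial$ (or $\im\overline\partial$) \emph{a priori}, and your sentence ``the task is to produce a $\overline\partial$-closed primitive $\eta$ with $\partial\eta=\mathcal{A}\gamma$'' is precisely the statement to be proved, not a consequence of what you have established. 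The vague appeal to ``conditions (a) and (b) of Definition~\ref{deftmm}'' does not supply the missing exactness.

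The ingredient you are lacking is the content of \cite[Lemma~4.9]{Lin06}, and it uses structure you have set up but not exploited: the decomposition $\Omega(M)=\bigoplus_k\Gamma(U^k)$ into eigenbundles of the Clifford action of $\J$. Because $\mathcal{A}$ is (up to sign) Clifford multiplication by the isotropic section $\xi_M-\sqrt{-1}(d\mu^\xi+\sqrt{-1}\alpha^\xi)\in C^\infty(L)$, it shifts $U$-degree by one in a definite direction; this is the mechanism by which \cite{Lin06} proves that $\mathcal{A}\varphi^i$ is $\partial$-\emph{exact} (not merely $\partial$-closed) for each $U$-homogeneous piece $\varphi^i$ of $\gamma$. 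Only then does the $\overline\partial\partial$-lemma apply to yield $\mathcal{A}\varphi^i=\overline\partial\partial\gamma^{i-1}$, after which the recursion you describe goes through. The paper sketches exactly this iteration in the paragraphs following Theorem~\ref{ddbarlemma}, working with the filtration~\eqref{filtration} by $U$-degree rather than your polynomial-degree filtration, and explicitly invokes \cite[Lemma~4.9]{Lin06} for the $\partial$-exactness step. That lemma is the heart of the equivariant $\overline\partial\partial$-lemma, and it cannot be extracted from $D_G^2=\overline\partial_G^2=0$ alone.
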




 Using an argument similar to the one we give to prove Proposition \ref{completion-of-cohomology},
 it is easy to see that there is a natural $(\widehat{S\g^*})^G$-module isomorphism from
 $H(\widehat{\Omega}_G,D_G)$ to $(\widehat{S\g^*})^G \otimes H(M,H)$
 by Theorem \ref{ddbarlemma}. Combining this fact with Proposition \ref{equivalence }, we have the following result.

\begin{theorem} (\textbf{Equivariant Formality})\label{Equivariant Formality}
Assume the $H$-twisted generalized complex manifold $M$ satisfies
the $\bar{\partial}\partial$-lemma, and assume the action of a
compact connected Lie group $G$ on $M$ is Hamiltonian. Let $\alpha$
be the moment one form of the Hamiltonian action and let
$H_G=H+\alpha$. Then as $(\widehat{S\frak{g}^*})^G$-modules we have that
\begin{equation} \label{canonical-iso} H_G(M,H_G)\cong
(\widehat{S\g^*})^G\otimes H(M, H),\end{equation} where
 the isomorphism depends only on the generalized
moment map.
\end{theorem}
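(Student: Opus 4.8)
The plan is to prove \eqref{canonical-iso} by composing three isomorphisms, so that the only real work lies in the completion bookkeeping. First, Proposition~\ref{equivalence } already supplies a $(\widehat{S\g^*})^G$-module isomorphism
\[ H_G(M,H_G)\;\cong\; H(\widehat{\Omega}_G,D_G)\]
via the explicit twist $[\gamma]\mapsto[\exp(-\newi \mu)\gamma]$. Second, I would show, exactly along the lines of the proof of Proposition~\ref{completion-of-cohomology}, that the cohomology of the completed Cartan complex is the $a$-adic completion of the cohomology of the uncompleted one, i.e.
\[ H(\widehat{\Omega}_G,D_G)\;\cong\;\widehat{H(\Omega_G,D_G)}.\]
Third, Theorem~\ref{ddbarlemma} evaluates $H(\Omega_G,D_G)\cong(S\g^*)^G\otimes H(M,H)$ as $(S\g^*)^G$-modules, and completing this identification gives $(\widehat{S\g^*})^G\otimes H(M,H)$. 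Concatenating the three displays yields the desired isomorphism; the second and third steps can be bundled into the single statement quoted just before the theorem.

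The third step is where the completed coefficient ring is produced. Since $M$ is compact, the twisted cohomology $H(M,H)$ is finite dimensional, so by Theorem~\ref{ddbarlemma} the module $H(\Omega_G,D_G)\cong(S\g^*)^G\otimes H(M,H)$ is a finitely generated free module over $(S\g^*)^G$. The latter ring is Noetherian, being the ring of invariants of a reductive group acting on the polynomial ring $S\g^*$, so completion commutes with the finite direct sum and $\widehat{(S\g^*)^G\otimes H(M,H)}\cong(\widehat{S\g^*})^G\otimes H(M,H)$; this is a direct consequence of the exactness of $a$-adic completion recorded in \cite[Prop. 10.2]{AM99}.

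The heart of the matter, and the step I expect to be the main obstacle, is the interchange of $a$-adic completion with the passage to cohomology in the second step. Here one cannot simply complete the complex term by term, because $\Omega_G$ is infinite dimensional and hence \emph{not} finitely generated over $(S\g^*)^G$; the argument must be run at the level of cohomology modules. Concretely, I would filter $\Omega_G$ by the submodules $a^k\Omega_G$, so that $\widehat{\Omega}_G=\varprojlim_k \Omega_G/a^k\Omega_G$, and apply \cite[Prop. 10.2]{AM99} to the inverse systems of cocycles, coboundaries, and cohomology attached to the complexes $(\Omega_G/a^k\Omega_G,D_G)$. The point that must be checked is that the inverse system $\{H(\Omega_G/a^k\Omega_G,D_G)\}_k$ has stable images, i.e. satisfies the Mittag-Leffler condition: this follows from the Artin-Rees lemma once one knows, via Theorem~\ref{ddbarlemma} and the finite dimensionality of $H(M,H)$, that $H(\Omega_G,D_G)$ is finitely generated over the Noetherian ring $(S\g^*)^G$. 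Granting this, $\varprojlim^1$ vanishes and $H(\widehat{\Omega}_G,D_G)\cong\varprojlim_k H(\Omega_G/a^k\Omega_G,D_G)\cong\widehat{H(\Omega_G,D_G)}$.

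Finally, I would verify the assertion that the resulting isomorphism depends only on the generalized moment map. The completion isomorphisms of the second and third steps are canonical, and the identification of Theorem~\ref{ddbarlemma} is built from the data of the Hamiltonian action; the only map that explicitly records moment-map information is the twist $[\gamma]\mapsto[\exp(-\newi \mu)\gamma]$ from Proposition~\ref{equivalence }. Tracing the composite therefore shows that its genuine dependence is through $\mu$ alone, as claimed.
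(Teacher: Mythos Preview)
Your proposal is correct and follows essentially the same three-step architecture the paper uses: Proposition~\ref{equivalence } for the first isomorphism, then Theorem~\ref{ddbarlemma} combined with a completion argument \`a la Proposition~\ref{completion-of-cohomology} for the second and third. The paper compresses your second and third steps into a single sentence (``Using an argument similar to the one we give to prove Proposition~\ref{completion-of-cohomology}\ldots''), whereas you spell out the Mittag-Leffler and Artin--Rees mechanics; but the strategy is the same.

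One small caveat: you invoke compactness of $M$ to ensure $H(M,H)$ is finite dimensional and hence that $H(\Omega_G,D_G)$ is finitely generated over $(S\g^*)^G$, which you need for Artin--Rees. The theorem as stated does not assume $M$ compact, and the paper does not flag this hypothesis either; so either compactness is tacitly assumed throughout (it is certainly present in the generalized K\"ahler applications via Theorem~\ref{Gua'slemma}), or one must argue more carefully that the completion step goes through without it.
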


It is interesting to give a more concrete description of the
isomorphism (\ref{canonical-iso}). Note that via the isomorphism
(\ref{canonical-iso}) one gets a canonical map
\[ s: H(M,H)\rightarrow H_G(M,H_G)\] such that the
inverse of the canonical isomorphism (\ref{canonical-iso}) is given
by  $ 1\otimes s$. Let us show how in principle one can compute the
map $s$.

 Let $\varphi$ be a $d_H$-closed differential form representing a cohomology class in
$H(M,H)$. Since the generalized complex manifold $M$ satisfies the
$\overline{\partial}\partial$-lemma, by Proposition
\ref{consequence-ddbar-lemma} we may assume that $\varphi$ is both
$\overline{\partial}$ and $\partial$ closed.  We claim that we may
assume that $\varphi$ is $G$-invariant as well. Resorting to the
usual averaging trick, to establish the claim one needs only to
check that the induced action of $G$ on $H(M,H)$ is trivial. Since
$G$ is connected, it suffices to show that all operators $L_{\xi_M}$
act trivially on $H(M,H)$, where $\xi_M$ denotes the vector field on
$M$ induced by $\xi \in \frak{g}$. However, since $\iota_{\xi_M}
H=d\alpha^{\xi}$, we have
\[\begin{split} L_{\xi_M}&=d\iota_{\xi_M}+\iota_{\xi_M}d =(d-H\wedge)(\iota_{\xi_M}+\alpha^{\xi}\wedge)+(\iota_{\xi_M}+\alpha^{\xi}\wedge)(d-H\wedge)
\\&=d_H(\iota_{\xi_M}+\alpha^{\xi}\wedge)+(\iota_{\xi_M}+\alpha^{\xi}\wedge)d_H \end{split}.\]
 That is to say that $L_{\xi_M}$ is chain homotopic to zero in
 $H(M, H)$ and our claim is established. In view of the grading (\ref{alternative-grading}), write $\varphi=\sum_i \varphi^i$, where $\varphi^i \in \Gamma(U^{i})$. Then each homogenous component
$\varphi^i$ is $G$-invariant and is both $\overline{\partial}$ and
$\partial$ closed.

Observe the grading (\ref{alternative-grading}) on the
space of differential forms induces an alternative filtration on the
Cartan model
\begin{equation}\label{filtration} F^k\Omega_G=\bigoplus_{j \leq
k}(S\g^* \otimes \Gamma(U^j))^G.\end{equation}

 Now fix an $i$ for the moment. It follows from \cite[Lemma 4.9]{Lin06} that
$\mathcal{A}\varphi^i$ is $\partial$-exact. Since
$\overline{\partial}$ anti-commutes with $\mathcal{A}$,
$\mathcal{A}\varphi^i \in F^{i-1}\Omega_G $ is both $\partial$-exact
and $\overline{\partial}$-closed. It then follows from the
$\overline{\partial}\partial$-lemma that there exists
$\gamma^{i-1}\in F^{i-1}\Omega_G$ such that
$\mathcal{A}\varphi^i=\overline{\partial}\partial \gamma^{i-1}$. (
We refer to \cite[Lemma 4.9]{Lin06} for a detailed proof of this
fact.) Also $\mathcal{A}\partial
\gamma^{i-1}=-\partial\mathcal{A}\gamma^{i-1}$ is both $\partial$
exact and $\overline{\partial}$-closed. Another application of the
$\overline{\partial}\partial$-lemma implies that there exists
$\gamma^{i-3}\in F^{i-3}\Omega_G$ such that
$\mathcal{A}\partial\gamma^{i-1}=\overline{\partial}\partial\gamma^{i-3}$.
Since the filtration (\ref{filtration}) is finite, applying this
argument recursively we get a finite chain $\gamma^k \in
F^{i+1-2k}\Omega_G$ such that
\[ \varphi^i_g:= \varphi^i+\sum_k \partial \gamma^k\] is
$\overline{\partial}_G=(\overline{\partial}+\mathcal{A})$-closed.
Note that by construction $\varphi_g^i$ is also $\partial$-closed.
So $\varphi^i_g$ is $D_G=(\overline{\partial}_G+\partial)$-closed.
Thus by Proposition \ref{equivalence } $e^{\sqrt{-1}\mu}(\varphi^i_g
)$ is $d_{G,H_G}$-closed in $\widehat{\Omega}_G$, the completion of
$\Omega_G$. Set $\varphi_g:=\sum_i \varphi^i_g$. Then
$e^{\sqrt{-1}\mu}\varphi_g$ is a $d_{G,H_G}$-closed element in $\widehat{\Omega}_G$, representing a cohomology class in $H_G(M,
H_G)$. A careful
reading of the proof of Theorem \ref{ddbarlemma} given in
\cite{Lin06} shows that $s[\varphi]=[\varphi_g]$.

 Recall that an equivariant differential form
$\eta \in \Omega_G(M)$ can be regarded as a polynomial mapping
\[ \frak{g}\rightarrow \Omega(M).\]
 Thus there is a natural map
\[p: \Omega_G(M) \rightarrow \Omega(M),\,\,\, \eta \mapsto
\eta(0),\] where $\eta (0)$ is the value of the polynomial map $\eta
:\frak{g} \rightarrow \Omega(M)$ evaluated at the zero element in
$\frak{g}$. It is easy to see that $p$ is a chain map between the
differential complexes $(\widehat{\Omega}_G(M), d_{G,H_G})$ and $(\Omega(M),
d_H)$ and so induces a map
\[ \widetilde{p}:H_G(M, H_G)\rightarrow H(M,H);\]
furthermore, by the above description of the map $s$, one checks
easily that $\widetilde{p}s=\text{id}$. In particular, we have
proved the following result.

\begin{corollary}\label{section} The canonical map $s$ is a section of
$\widetilde{p}$. Thus every cohomology class in $H(M,H)$ has a
canonical equivariant extension in $H_G(M, H_G)$.
\end{corollary}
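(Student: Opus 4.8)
The plan is to verify directly that $\widetilde p\circ s=\mathrm{id}_{H(M,H)}$; since $\widetilde p$ is induced by the chain map $p$ established above, this is the only thing requiring proof, and it is exactly what exhibits $s$ as a section of $\widetilde p$. First I would fix a class in $H(M,H)$ and, using Proposition \ref{consequence-ddbar-lemma} together with the averaging argument given above, represent it by a $G$-invariant form $\varphi=\sum_i\varphi^i$ (with $\varphi^i\in\Gamma(U^i)$) that is simultaneously $\partial$- and $\overline\partial$-closed. By the explicit description of $s$ recalled above, $s[\varphi]$ is represented in $\widehat\Omega_G$ by $e^{\sqrt{-1}\mu}\varphi_g$, where $\varphi_g=\sum_i\varphi^i_g$ and $\varphi^i_g=\varphi^i+\sum_k\partial\gamma^k$ with $\gamma^k\in F^{i+1-2k}\Omega_G$. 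Hence $\widetilde p\,(s[\varphi])=\big[\,p(e^{\sqrt{-1}\mu}\varphi_g)\,\big]$, and the whole matter reduces to computing the differential form $p(e^{\sqrt{-1}\mu}\varphi_g)=(e^{\sqrt{-1}\mu}\varphi_g)(0)$.

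The heart of the argument is a bookkeeping of polynomial degrees in $S\g^*$. The key observations are: (i) the evaluation map $p$ is substitution $\xi=0$, hence an algebra homomorphism that annihilates every term of positive $S\g^*$-degree; (ii) the moment map $\mu$, viewed as the equivariant form $\xi\mapsto\mu^\xi$, is homogeneous of $S\g^*$-degree $1$, so that $p(e^{\sqrt{-1}\mu})=e^{\sqrt{-1}\mu(0)}=1$; and (iii) the operator $\mathcal{A}$ of (\ref{horizontal-differential}) raises $S\g^*$-degree by exactly one. From (iii) I would run the recursion defining the $\gamma^k$: starting from $\varphi^i$, which has $S\g^*$-degree $0$, the form $\mathcal{A}\varphi^i$ has degree $1$, so $\gamma^{i-1}$ (chosen via the $\overline\partial\partial$-lemma, which preserves $S\g^*$-degree) has degree $1$; inductively each $\gamma^k$, and therefore each correction $\partial\gamma^k$, has strictly positive $S\g^*$-degree. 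Consequently $p$ kills all correction terms and $p(\varphi_g)=\sum_i\varphi^i=\varphi$.

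Combining (i)--(iii) with the multiplicativity of $p$ and the fact that $e^{\sqrt{-1}\mu}$ is a form-degree-zero (function-valued) equivariant form, I obtain
\[ p(e^{\sqrt{-1}\mu}\varphi_g)=p(e^{\sqrt{-1}\mu})\wedge p(\varphi_g)=1\wedge\varphi=\varphi,\]
so that $\widetilde p\,(s[\varphi])=[\varphi]$ and $\widetilde p\circ s=\mathrm{id}$. This says precisely that $s$ is a section of $\widetilde p$; in particular $s$ is injective, so its image furnishes, for each class of $H(M,H)$, a distinguished lift to $H_G(M,H_G)$, the asserted canonical equivariant extension. The only genuinely delicate point is observation (iii) and the induction it feeds: one must be sure that $\mathcal{A}$ and the chain $\gamma^k$ produced by repeated use of the $\overline\partial\partial$-lemma never acquire a nonzero $S\g^*$-degree-$0$ term, since it is exactly the vanishing at $\xi=0$ of all these terms that makes the equivariant extension restrict back to the original class.
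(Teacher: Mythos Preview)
Your proof is correct and is exactly the ``easy check'' the paper alludes to without spelling out: the paper simply asserts that, from the explicit description of $s$, one verifies $\widetilde p\,s=\mathrm{id}$, and your degree-bookkeeping argument (that $\mathcal A$ raises $S\g^*$-degree by one, that the $\overline\partial\partial$-lemma can be applied degree-by-degree so each $\gamma^k$ has strictly positive $S\g^*$-degree, and that $e^{\sqrt{-1}\mu}$ evaluates to $1$ at $\xi=0$) is precisely what makes this check go through.
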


Recall from \cite{LT05} that the action of a compact connected Lie
group $G$ on an $H$-twisted generalized K\"ahler manifold
$(M,\J_1,\J_2)$ is said to be Hamiltonian if the action preserves
the generalized K\"ahler pair $(\J_1,\J_2)$ and if there exist a
generalized moment map and a moment one-form for the twisted
generalized complex structure $\J_1$. By Theorem \ref{Gua'slemma} an
$H$-twisted generalized K\"ahler manifold always satisfies the
$\overline{\partial}\partial$-lemma with respect to both $\J_1$ and
$\J_2$. The following result is a direct consequence of Theorem
\ref{Equivariant Formality}.

\begin{theorem}\label{formality-g-kahler} Suppose that a compact connected
Lie group $G$ acts on an $H$-twisted generalized K\"ahler manifold
$(M,\J_1,\J_2)$ in a Hamiltonian fashion. Let $\alpha$ be the moment
one form and $H_G=H+\alpha$ the equivariantly closed three form.
Then we have \[ H_G(M,H_G)\cong (\widehat{S\g^*})^G\otimes H(M, H),\] where the isomorphism
is an isomorphism of $(\widehat{S\frak{g}^*})^G$-modules and depends  only on
the generalized moment map.
\end{theorem}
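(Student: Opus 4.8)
The plan is to obtain Theorem \ref{formality-g-kahler} as an immediate specialization of the general Equivariant Formality Theorem (Theorem \ref{Equivariant Formality}). That result produces the desired $(\widehat{S\g^*})^G$-module isomorphism from two inputs: that the underlying $H$-twisted generalized complex manifold satisfies the $\bar{\partial}\partial$-lemma, and that the $G$-action is Hamiltonian. So the entire task reduces to verifying that both inputs are automatically present once $(M,\J_1,\J_2)$ is generalized K\"ahler and the action is Hamiltonian in the generalized K\"ahler sense.

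First I would unwind the definition of a Hamiltonian action on a generalized K\"ahler manifold recalled from \cite{LT05} just above the statement. By that definition the action of $G$ preserves the pair $(\J_1,\J_2)$ and, crucially, there exist a generalized moment map $\mu\colon M\to\g^*$ and a moment one-form $\alpha\in\Omega^1(M,\g^*)$ for the single $H$-twisted generalized complex structure $\J_1$. This is precisely the package of data demanded by Definition \ref{deftmm}, so with respect to $\J_1$ the $G$-action is Hamiltonian in exactly the sense required by Theorem \ref{Equivariant Formality}, and $H_G=H+\alpha$ is the associated equivariantly closed three form. Next, to supply the $\bar{\partial}\partial$-lemma, I would invoke Gualtieri's theorem (Theorem \ref{Gua'slemma}): a compact $H$-twisted generalized K\"ahler manifold satisfies the $\bar{\partial}\partial$-lemma with respect to both $\J_1$ and $\J_2$. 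In particular it holds for $\J_1$, which is the structure whose Clifford grading \eqref{alternative-grading} and whose operators $\partial,\bar{\partial}$ enter the statement and proof of Theorem \ref{Equivariant Formality}.

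With both hypotheses verified with respect to $\J_1$, Theorem \ref{Equivariant Formality} applies verbatim (taking $\J=\J_1$) and yields the $(\widehat{S\g^*})^G$-module isomorphism $H_G(M,H_G)\cong(\widehat{S\g^*})^G\otimes H(M,H)$, with the isomorphism depending only on the generalized moment map, as claimed. The only point requiring any care—and it is the mildest of obstacles rather than a genuine difficulty—is the bookkeeping that everything entering Theorem \ref{Equivariant Formality} (the grading \eqref{alternative-grading}, the operator $\mathcal{A}$ of \eqref{horizontal-differential}, and the $\bar{\partial}\partial$-lemma itself) must be read off the distinguished structure $\J_1$ for which the moment data exists, and not $\J_2$. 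Since Gualtieri's result delivers the $\bar{\partial}\partial$-lemma for $\J_1$ as well, this alignment is automatic, and the theorem follows directly.
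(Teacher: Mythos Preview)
Your proposal is correct and matches the paper's own argument exactly: the paper states the theorem as ``a direct consequence of Theorem \ref{Equivariant Formality}'' after noting, via Theorem \ref{Gua'slemma}, that a generalized K\"ahler manifold satisfies the $\bar{\partial}\partial$-lemma with respect to $\J_1$. Your additional bookkeeping remark that all data (the grading, $\mathcal{A}$, and the $\bar{\partial}\partial$-lemma) must be read off $\J_1$ is a useful clarification but does not depart from the paper's approach.
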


Next we extend the Kirwan injectivity theorem \cite{Kir86} to the
case of Hamiltonian actions on $H$-twisted generalized  K\"ahler
manifolds. We need the following intermediate result.

\begin{lemma}\label{fixed-points-set} Suppose that a compact connected torus $T$ acts
on a generalized complex manifold $(M,\J)$ in a Hamiltonian fashion.
And suppose that $W$ is a connected component of the fixed points
set $M^T$. Then $W$ is a generalized complex manifold itself and the
induced trivial $T$-action on $W$ is Hamiltonian.
\end{lemma}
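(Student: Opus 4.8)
The plan is to realize $W$ as a generalized complex submanifold by restricting $\J$ to the weight-zero part of the extended bundle along $W$, and then to produce the moment data on $W$ simply by pulling back $\mu$ and $\alpha$ under the inclusion $i\colon W\hookrightarrow M$. First I would record that, $T$ being compact and connected, $W$ is a closed embedded submanifold of $M$ on which $T$ acts trivially. Along $W$ the extended $T$-action splits $(TM\oplus T^*M)|_W$ into weight spaces, and because $W=M^T$ the weight-zero summand is exactly $TW\oplus T^*W$ (an invariant vector, resp.\ covector, at a point of $W$ is automatically tangent to $W$, resp.\ annihilates the normal directions). Since the natural pairing is $T$-invariant it pairs the weight-$w$ summand with the weight-$(-w)$ summand, so the decomposition into the weight-zero part $TW\oplus T^*W$ and its nonzero-weight complement is orthogonal. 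As $T$ preserves $\J$, the bundle map $\J$ commutes with the extended action, hence preserves the weight splitting; restricting it to $TW\oplus T^*W$ yields an orthogonal map $\J_W$ with $\J_W^2=-1$, a generalized almost complex structure on $W$ whose $\newi$-eigenbundle $L_W$ is the weight-zero part of $L|_W$.

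The main step is integrability of $\J_W$ with respect to the closed three-form $H_W:=i^*H$. I would test involutivity of $L_W$ on invariant sections: every section of $L_W$ is the restriction to $W$ of a $T$-invariant section of $L$ defined near $W$ (extend over a tubular neighborhood and average over $T$, using that $L$ is $T$-invariant and that averaging fixes weight-zero values along $W$). For two such sections $s_1,s_2\in C^\infty(L)^T$, each restricting to $TW\oplus T^*W$ along $W$, the vector parts are $i$-related to their restrictions $\bar s_1,\bar s_2$, and a term-by-term inspection of the $H$-twisted Courant bracket shows $[s_1,s_2]_H|_W=[\bar s_1,\bar s_2]_{H_W}$; in particular the curvature term restricts as $\iota_Y\iota_X H|_W=\iota_{\bar Y}\iota_{\bar X}\, i^*H$ by naturality of contraction along $i$. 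Since $\J$ is integrable, $[s_1,s_2]_H\in C^\infty(L)$, and being $T$-invariant its restriction is weight-zero, hence lands in $L_W$. This compatibility of the two Courant brackets under restriction is the technical heart of the argument and the step I expect to be the main obstacle; it rests entirely on the sections being tangential, i.e.\ weight-zero, along $W$, which is where $W=M^T$ is used.

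It is worth noting that Part (1) above uses only the $T$-invariance of $\J$ and $H$, not the Hamiltonian hypothesis; the moment data enters only in showing the induced action is Hamiltonian. To that end I set $\mu_W:=i^*\mu$ and $\alpha_W:=i^*\alpha$. Because $T$ is abelian, equivariance of $\mu$ and of $\alpha$ makes $\mu^\xi$ and $\alpha^\xi$ invariant, so their restrictions are weight-zero and $(d\mu^\xi+\newi\,\alpha^\xi)|_W=d\mu_W^\xi+\newi\,\alpha_W^\xi$ lies in $T^*_\C W$. Restricting condition (a) of Definition \ref{deftmm} on $M$ and using $\xi_M|_W=0$ gives $-\newi(d\mu_W^\xi+\newi\,\alpha_W^\xi)\in C^\infty(L_W)$; since the induced vector field $\xi_W$ vanishes identically this is precisely condition (a) on $W$.

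For condition (b), expanding $d_G(H+\alpha)=0$ on $M$ into its components yields $dH=0$, $d\alpha^\xi=\iota_{\xi_M}H$, and $\iota_{\xi_M}\alpha^\xi=0$. Pulling back along $i$ and using $\xi_M|_W=0$ gives $dH_W=i^*dH=0$ and $d\alpha_W^\xi=i^*(\iota_{\xi_M}H)=0$. As $\xi_W=0$, the equivariant differential on $W$ reduces to $d$, so $H_W+\alpha_W=i^*(H+\alpha)$ is equivariantly closed in the Cartan model of the trivial action. Hence $(\mu_W,\alpha_W)$ exhibit the trivial $T$-action on $(W,\J_W)$ as Hamiltonian, which completes the proof.
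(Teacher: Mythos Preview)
Your proof is correct, and the Hamiltonian part (restricting $\mu$ and $\alpha$, using $\xi_M|_W=0$, checking conditions (a) and (b)) is essentially identical to the paper's, with the bonus that you spell out condition~(b), which the paper leaves implicit.

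The genuine difference is in how the generalized complex structure on $W$ is obtained. The paper does not argue this directly: it invokes \cite[Lemma~5.4]{Lin06}, where $L_W$ is defined by the submanifold-reduction recipe
\[
L_{W,x}=\{\,X+(\xi\mid_{T_{\C}W})\ :\ X+\xi\in L\cap(T_{\C,x}W\oplus T^*_{\C,x}M)\,\},
\]
and integrability is proved there. You instead exploit that $W=M^T$ to take the weight-zero splitting of $(TM\oplus T^*M)|_W$, identify the weight-zero part with $TW\oplus T^*W$, and restrict $\J$; your $L_W$ is then the weight-zero part of $L|_W$. These two descriptions coincide precisely because $L$ is $T$-invariant: an element $X+\xi\in L_x$ with $X\in T_{\C,x}W$ has its weight-zero projection $X+\xi_0$ again in $L_x$, and the nonzero-weight pieces $\xi_w$ lie in $\mathrm{Ann}(T_xW)$, hence vanish under restriction to $T_{\C,x}W$. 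Your integrability argument (extend to $T$-invariant sections of $L$, observe that the $H$-twisted Courant bracket of invariant sections is invariant since $T$ preserves $H$, and that brackets of tangential sections restrict correctly) is sound and gives a self-contained proof where the paper outsources to a reference. The trade-off is that the reduction recipe from \cite{Lin06} works for more general invariant submanifolds, while your weight-space argument is tailored to the fixed locus---which is exactly what is needed here.
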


\begin{proof} Let $L$ be the $\sqrt{-1}$ eigenbundle of
$\J: T_{\C}M\oplus T^*_{\C}M\rightarrow T_{\C}M\oplus T^*_{\C}M$. It
is proved in \cite[Lemma 5.4]{Lin06} that $X$ carries a generalized
complex structure whose $\sqrt{-1}$ eigenbudle is
\[L_{W,x}=\{X+(\xi \mid_{T_{\C}W})\,\,\vert \,\, X+\xi \in
L\cap\left(T_{\C,x}W\oplus T_{\C,x}^*M\right)\}.\] Let
$\mu:M\rightarrow \frak{t}^*$ and $\alpha\in \Omega^1(M,\frak{t}^*)$
be the generalized moment map and the moment one form of the $T$
action on $M$ respectively. Let $\mu_W$  and $\alpha_W$ be the
restrictions of $\mu$ and $\alpha$ to $W$.  Then it is easy to check
that $\sqrt{-1}(d\mu^{\xi}_W+\sqrt{-1}\alpha^{\xi}_W) \in
C^{\infty}(L_W)$ for any $\xi \in \frak{t}$. This proves that the
trivial $T$ action on $W$ is Hamiltonian.
\end{proof}

\begin{theorem}({\bf Kirwan injectivity}) Suppose that a compact connected
Lie group $G$ acts on an $H$-twisted generalized K\"ahler manifold
$(M,\J_1,\J_2)$ in a Hamiltonian fashion. Then the canonical map
\[ i^*: H_{T}^*(M,H_T) \rightarrow H_{T}^*(M^T,i^*H_T),\]
induced by the inclusion $i:M^T\rightarrow M$ is injective, where
$M^T$ is the fixed points set of the torus $T$ action.
\end{theorem}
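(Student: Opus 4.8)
The plan is to deduce injectivity from the equivariant formality theorem together with the localization theorem, by the torsion-freeness argument that establishes Kirwan injectivity for Hamiltonian torus actions in the symplectic setting. Since Theorem \ref{t--localizationI} is a statement about tori, I would first pass to a maximal torus $T\subset G$: the restriction of the Hamiltonian $G$-action to $T$ is again Hamiltonian, with generalized moment map $\mu$ composed with the projection $\g^*\to\t^*$ and with moment one form $\alpha_T$ the $\t^*$-component of $\alpha$, so that the restricted three form $H_T=H+\alpha_T$ remains equivariantly closed for $T$. It therefore suffices to prove that $i^*\colon H_T^*(M,H_T)\to H_T^*(M^T,i^*H_T)$ is injective for this Hamiltonian $T$-action.

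First I would pin down the module structure of the source. By Theorem \ref{Gua'slemma} the generalized K\"ahler manifold $(M,\J_1,\J_2)$ satisfies the $\overline{\partial}\partial$-lemma, so the equivariant formality theorem (Theorem \ref{formality-g-kahler}) supplies an isomorphism $H_T(M,H_T)\cong(\widehat{S\t^*})^T\otimes H(M,H)$ of $(\widehat{S\t^*})^T$-modules. Because $T$ is abelian it acts trivially on $\t^*$, so $(\widehat{S\t^*})^T=\widehat{S\t^*}$ and $H_T(M,H_T)$ is a \emph{free} $\widehat{S\t^*}$-module.

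The next step, and the crux of the argument, is to transport this freeness to the polynomial ring $S\t^*=\C[x_1,\dots,x_n]$, over which the support appearing in Theorem \ref{t--localizationI} is computed. The completed ring $\widehat{S\t^*}=\C[[x_1,\dots,x_n]]$ is an integral domain containing $S\t^*$, hence torsion-free as an $S\t^*$-module; a free $\widehat{S\t^*}$-module is a direct sum of copies of $\widehat{S\t^*}$, so $H_T(M,H_T)$ is torsion-free as an $S\t^*$-module.

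Finally I would run the support argument. Set $K=\ker i^*\subset H_T(M,H_T)$; as a submodule of a torsion-free $S\t^*$-module it is torsion-free, so if $K\neq 0$ the only polynomial annihilating all of $K$ is $0$, whence $\supp K=V_0=\t$. On the other hand Theorem \ref{t--localizationI} gives $\supp K\subset\bigcup_H\frak{h}$, where $H$ ranges over the stabilizers $\neq T$. Each such $\frak{h}$ is a proper subspace (a proper closed subgroup of the connected torus $T$ has $\frak{h}\subsetneq\t$) and only finitely many occur by compactness of $M$, so this is a finite union of proper linear subspaces and cannot equal $\t$. Comparing the two descriptions of $\supp K$ forces $K=0$, which is the desired injectivity. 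I expect the main obstacle to be precisely this third step: reconciling equivariant formality, which is a freeness statement over the completed ring $\widehat{S\t^*}$, with the localization theorem, whose support statement lives over the polynomial ring $S\t^*$; the bridge is the torsion-freeness of $\C[[x_1,\dots,x_n]]$ over $\C[x_1,\dots,x_n]$.
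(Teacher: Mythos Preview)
Your argument is correct and follows the same core strategy as the paper: equivariant formality (Theorem \ref{formality-g-kahler}) makes $H_T(M,H_T)$ torsion-free over $S\t^*$, while localization (Theorem \ref{t--localizationI}) forces $\ker i^*$ to be torsion, hence zero.

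There are two minor differences worth noting. First, the paper spends its opening lines showing that each component of $M^T$ is again generalized K\"ahler with a Hamiltonian (trivial) $T$-action, so that equivariant formality applies to the \emph{target} $H_T(M^T,i^*H_T)$ as well; it then argues that torsion elements in the source map to zero in the torsion-free target. You bypass this entirely by observing that torsion-freeness of the source alone already kills the kernel, which is cleaner and makes the analysis of $M^T$ unnecessary. Second, the paper asserts that a free $\widehat{S\t^*}$-module is a free $S\t^*$-module; you are more careful and only claim torsion-freeness over $S\t^*$, which is all that is needed and is immediate from the fact that $\widehat{S\t^*}$ is a domain containing $S\t^*$. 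Your preliminary reduction from $G$ to a maximal torus $T$ is not in the paper's proof (which tacitly treats the acting group as a torus from the outset), but it is a sensible reading of the statement and costs nothing.
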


\begin{proof} It is proved in \cite[Prop. 5.5]{Lin06} any connected component of the fixed
points set is a generalized K\"ahler manifold itself. Combining this
fact with Lemma \ref{fixed-points-set} and Theorem
\ref{formality-g-kahler}, one sees immediately that
$H_{T}(M^T,i^*H_T)$ is a free $\widehat{S\frak{t}^*}$-module. Since $S\t^*$ is a subring of $\widehat{S\frak{t}^*}$,
$H_{T}(M^T,i^*H_T)$ must be a free $S\t^*$-module as well. So if $[\alpha]$
is a torsion element in $H_T(M,H_T)$, it gets mapped to zero in
$H_{T}^*(M^T,i^*H_T)$. On the other hand, Theorem
\ref{t--localizationI} implies that the kernel of the map $i^*$ is a
torsion module. So the kernel of $i^*$ is the module of torsion
elements in $H_T(M,H_T)$. However, by Theorem
\ref{formality-g-kahler} $H_T(M,H_T)$ is a free $\widehat{S\frak{t}^*}$-module. So it must be a free $S\frak{t}^*$-module
and it does not have any non-trivial torsion elements. We conclude that the
map $i^*$ is injective.
\end{proof}


\section{Some reflections on the cohomology of generalized complex
quotients}\label{generalized-complex-quotients}

Let a compact Lie group $G$ act on a twisted generalized complex
manifold $(M,\J)$ with generalized moment map $\mu$. Let $\cO_a$ be
the co-adjoint orbit through $a \in \g^*$. If $G$ acts freely on
$\mu^{-1}(\cO_a)$, then $\cO_a$ consists of regular values and $M_a
= \mu^{-1}(\cO_a)/G$ is a manifold, which is called the {\bf
generalized complex quotient}. The following  two results were
proved in \cite{LT05}.

\begin{lemma}\label{twist three form}
Let a compact Lie group $G$ act freely on a manifold $M$. Let $H$ be
an invariant closed three form and let $\alpha$ be an equivariant
mapping from $\g$ to $\Omega^1(M)$.  Fix a connection $\theta \in
\Omega(M,\g^*)$. Then if $H + \alpha \in \Omega^3_G(M)$ is
equivariantly closed, there exists a natural form $\Gamma \in
\Omega^2(M)^G$ so that $\iota_{\xi_M} \Gamma = \alpha^\xi$. Thus $H
+ \alpha + d_G \Gamma \in \Omega^3(M)^G \subset \Omega_G^3(M)$ is
closed and basic and so descends to a closed form $\widetilde{H} \in
\Omega^3(M/G)$ so that $[\widetilde{H}]$ is the image of $[H  +
\alpha]$ under the Kirwan map.
\end{lemma}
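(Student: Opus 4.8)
The plan is to unpack the hypothesis that $H+\alpha$ is equivariantly closed, build $\Gamma$ by hand out of the connection, and then recognize $H+\alpha+d_G\Gamma$ as a basic representative that descends to the quotient.

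First I would expand $d_G(H+\alpha)=0$ in the Cartan model. Since $(d_G\gamma)(\xi)=d(\gamma(\xi))-\iota_{\xi_M}\gamma(\xi)$ and $H+\alpha$ has total degree three, collecting terms of equal exterior degree on $M$ yields three identities: $dH=0$ (the given hypothesis), the degree-two identity
\[ \iota_{\xi_M}H=d\alpha^\xi,\qquad \xi\in\g, \]
and the degree-zero identity $\iota_{\xi_M}\alpha^\xi=0$, equivalently, by polarization,
\[ \iota_{\xi_M}\alpha^\eta+\iota_{\eta_M}\alpha^\xi=0,\qquad \xi,\eta\in\g. \]
This last, purely algebraic consequence of equivariant closedness is precisely the antisymmetry needed for a two-form with the prescribed contractions to exist; it is the key input one must not overlook.

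Second, using $G$-invariant connection elements $\theta^1,\dots,\theta^k$ (available since $G$ is compact) with $\iota_{\xi_{i,M}}\theta^j=\delta^i_j$, I would set
\[ \Gamma=\sum_j \theta^j\wedge\alpha^{\xi_j}+\tfrac12\sum_{i,j}\big(\iota_{\xi_{i,M}}\alpha^{\xi_j}\big)\,\theta^i\wedge\theta^j\ \in\ \Omega^2(M)^G. \]
The first summand alone gives $\iota_{\xi_{i,M}}\big(\sum_j\theta^j\wedge\alpha^{\xi_j}\big)=\alpha^{\xi_i}-\sum_j(\iota_{\xi_{i,M}}\alpha^{\xi_j})\theta^j$, so $\alpha^{\xi_i}$ appears but with an unwanted vertical tail coming from the failure of $\alpha^{\xi_i}$ to be horizontal. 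The correction term is rigged so that, by the antisymmetry of $\iota_{\xi_{i,M}}\alpha^{\xi_j}$ established in the first step, its contraction along $\xi_{l,M}$ equals exactly that tail, leaving $\iota_{\xi_M}\Gamma=\alpha^\xi$. Invariance of $\Gamma$ follows from the $\Ad$-invariance of the natural pairing $\g\otimes\g^*\to\R$ together with equivariance of $\alpha$ and invariance of the connection. This verification that $\iota_{\xi_M}\Gamma=\alpha^\xi$ holds on the nose is the main obstacle: it is the only place the quadratic identity $\iota_{\xi_M}\alpha^\xi=0$ is genuinely used, and the point where any naive choice of $\Gamma$ breaks down.

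Third, with such a $\Gamma$ in hand I would compute $(H+\alpha+d_G\Gamma)(\xi)=H+\alpha^\xi+d\Gamma-\iota_{\xi_M}\Gamma=H+d\Gamma$, so the equivariant form collapses to the ordinary invariant three-form $H+d\Gamma\in\Omega^3(M)^G$. It is closed since $dH=0$, and it is horizontal because, by the magic formula and invariance of $\Gamma$,
\[ \iota_{\xi_M}(H+d\Gamma)=\iota_{\xi_M}H+L_{\xi_M}\Gamma-d\iota_{\xi_M}\Gamma=d\alpha^\xi-d\alpha^\xi=0, \]
using $\iota_{\xi_M}H=d\alpha^\xi$. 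Hence $H+d\Gamma$ is basic and descends to a closed $\widetilde H\in\Omega^3(M/G)$ with $\pi^*\widetilde H=H+d\Gamma$. Finally, since $H+\alpha$ and $H+d\Gamma$ differ by the equivariantly exact form $d_G\Gamma$, they represent the same class in $H_G^3(M)$; and a basic, polynomial-degree-zero, horizontal form is fixed by the Cartan map, so under the isomorphism $H_G(M)\cong H(M/G)$ of Theorem \ref{Cartan--Map} — which is the Kirwan map in this free setting — the class $[H+\alpha]$ is carried to $[\widetilde H]$, as claimed.
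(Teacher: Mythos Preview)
Your argument is correct. The paper itself does not supply a proof of this lemma; it merely records the statement and cites \cite{LT05}, where the result was originally established. So there is no proof in the present paper to compare against.

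That said, your construction is exactly the expected one: decompose the equivariant closedness of $H+\alpha$ into the three scalar identities $dH=0$, $\iota_{\xi_M}H=d\alpha^\xi$, and $\iota_{\xi_M}\alpha^\eta+\iota_{\eta_M}\alpha^\xi=0$; use the connection to build a two-form whose vertical contractions reproduce $\alpha^\xi$ (the quadratic correction in the $\theta^i\wedge\theta^j$ is precisely what the antisymmetry identity allows); and then observe that $H+\alpha+d_G\Gamma=H+d\Gamma$ is basic via Cartan's formula. Your verification that $\iota_{\xi_M}\Gamma=\alpha^\xi$ is clean, and the final horizontality check $\iota_{\xi_M}(H+d\Gamma)=d\alpha^\xi-d\alpha^\xi=0$ is the right computation. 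The identification of $[\widetilde H]$ with the image of $[H+\alpha]$ under the Kirwan map via Theorem~\ref{Cartan--Map} is also correctly argued, since a basic form of polynomial degree zero is fixed by the Cartan map. Nothing is missing.
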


\begin{proposition} \label{Twisted Complex Reduction}
Assume there is a Hamiltonian action of a compact Lie group $G$ on
an $H$-twisted generalized complex manifold $(M,\mathcal{J})$ with
generalized moment map $\mu: M \to \g^*$ and moment one-form $\alpha
\in \Omega^1(M,\g^*)$. Let $\cO_a$ be a co-adjoint orbit through $a
\in \g^*$ so that $G$ acts freely on $\mu^{-1}(\cO_a)$. Given a
connection on $\mu^{-1}(\cO_a)$, the generalized complex quotient
$M_a$ inherits an $\widetilde{H}$-twisted generalized complex
structure $\widetilde{\J}$, where $\widetilde{H}$ is defined as in
the Lemma \ref{twist three form}. Up to $B$-transform,
$\widetilde{J}$ is independent of the choice of connection. Finally,
for all $m \in \mu^{-1}(\cO_a)$,
$$\type(\widetilde{\mathcal{J}})_{[m]} = \type(\mathcal{J})_m.$$
\end{proposition}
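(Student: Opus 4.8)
The plan is to realize $\widetilde{\J}$ through its $\sqrt{-1}$-eigenbundle, obtained by reducing the eigenbundle $L$ of $\J$ along $Z := \mu^{-1}(\cO_a)$ and descending to $M_a = Z/G$. As a preliminary normalization I would first absorb the moment one-form: taking $\Gamma \in \Omega^2(M)^G$ as in Lemma \ref{twist three form} (so that $\iota_{\xi_M}\Gamma = \alpha^\xi$ and $H+\alpha+d_G\Gamma$ is closed, basic, and descends to $\widetilde H$), I replace $\J$ by its $B$-transform $e^{-\Gamma}\J e^{\Gamma}$. Using $e^{-\Gamma}(X+\eta)=X+\eta-\iota_X\Gamma$ and $\iota_{\xi_M}\Gamma=\alpha^\xi$, the defining section transforms as $e^{-\Gamma}\bigl(\xi_M+\alpha^\xi-\sqrt{-1}\,d\mu^\xi\bigr)=\xi_M-\sqrt{-1}\,d\mu^\xi$, so the new structure is Hamiltonian with \emph{vanishing} moment one-form and a twisting that is now the basic closed three-form descending to $\widetilde H$. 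Since $B$-transforms leave the projection $\pi$ (hence the type) unchanged and are exactly the ambiguity allowed in the statement, it suffices to establish everything for this normalized structure and then read off the $B$-transform dependence at the end.

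Next I construct the reduced Courant algebroid and eigenbundle. Over $Z$ let $K=\langle\xi_M\rangle\subset TZ$ be the rank $\dim G$ bundle of orbit directions and let $N^*Z=\mathrm{Ann}(TZ)=\langle d\mu^\xi\rangle$ be its conormal; then $\mathcal{K}:=K\oplus N^*Z\subset (TM\oplus T^*M)|_Z$ is isotropic, and a short computation gives $\mathcal{K}^\perp=TZ\oplus\mathrm{Ann}(K)$, so that $\mathcal{K}^\perp/\mathcal{K}$ is $G$-equivariant and descends to $TM_a\oplus T^*M_a$ carrying the twist $\widetilde H$. I then set
\[\widetilde{L}_{[m]}=\frac{\bigl(L_m\cap(\mathcal{K}^\perp)_\C\bigr)+\mathcal{K}_\C}{\mathcal{K}_\C},\]
the image of $L\cap\mathcal{K}^\perp$ in the reduced algebroid. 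Isotropy of $\widetilde{L}$ is inherited from that of $L$. The essential point is a clean rank count: the Hamiltonian condition $\xi_M-\sqrt{-1}\,d\mu^\xi\in C^\infty(L)$, together with $L\cap\overline{L}=0$ and freeness of the action, is precisely what forces $\dim_\C(L\cap\mathcal{K}_\C)=\dim G$ and $\dim_\C(L\cap\mathcal{K}^\perp_\C)=\dim M-\dim G$, so that $\widetilde{L}$ has constant complex rank $\dim_\C M_a$, is maximal isotropic, and varies smoothly. Finally $\widetilde{L}\cap\overline{\widetilde{L}}=0$ follows by lifting an element of the intersection back to $\mathcal{K}^\perp$ and invoking $L\cap\overline{L}=0$ modulo $\mathcal{K}$, with the real moment forms $d\mu^\xi$ supplying the needed transversality.

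The main obstacle is integrability: I must show the sections of $\widetilde{L}$ are closed under the $\widetilde H$-twisted Courant bracket. I would prove this by lifting local sections of $\widetilde{L}$ to $G$-invariant sections of $L\cap\mathcal{K}^\perp$ over $Z$ (possible by averaging), and checking that the $H$-twisted Courant bracket of two such lifts again lands in $L\cap\mathcal{K}^\perp$ and projects to the reduced bracket. Closure inside $L$ is just integrability of $\J$; that the bracket stays in $\mathcal{K}^\perp$ and is insensitive to the $\mathcal{K}$-ambiguity uses that $\xi_M$ and $d\mu^\xi|_{TZ}$ generate infinitesimal symmetries of the Courant structure, together with the identity $\iota_{\xi_M}H=d\alpha^\xi$ and the equivariant closedness of $H+\alpha$ from Definition \ref{deftmm}. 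Tracking how the twisting behaves under the $B$-field $\Gamma$ and under descent to $Z/G$ is exactly what yields the $\widetilde H$-twist predicted by Lemma \ref{twist three form}.

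Finally I address connection independence and the type. Two connections on $Z$ produce forms $\Gamma,\Gamma'$ with $\iota_{\xi_M}(\Gamma-\Gamma')=0$, so $\Gamma-\Gamma'$ is basic and descends to a two-form $B$ on $M_a$; the two reduced structures are then related by $e^{B}$, giving independence up to $B$-transform (and, via Lemma \ref{Btransformoperator}, the corresponding controlled change of pure spinor and twist). For the type I compute $\codim_{T_\C M_a}\pi_a(\widetilde{L})$, where $\pi_a$ is the tangent projection of the reduced algebroid. One checks that $\pi_a(\widetilde{L})=(\pi(L)\cap T_\C Z)/K_\C$; the Hamiltonian condition places $\xi_M$ in $\pi(L)$ while making $d\mu$ transverse to $\pi(L)$ (in the local normal form the action is symplectic on the symplectic factor of $\J$ and trivial on the complex factor), so that $\pi(L)+T_\C Z=T_\C M$. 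The transversal dimension count then gives $\codim_{T_\C Z}(\pi(L)\cap T_\C Z)=\codim_{T_\C M}\pi(L)$, and dividing by $K_\C\subset\pi(L)\cap T_\C Z$ preserves the codimension, so $\type(\widetilde{\J})_{[m]}=\type(\J)_m$. The symplectic case $\pi(L)=T_\C M$ (type $0$, reduced structure again type $0$) serves as the consistency check.
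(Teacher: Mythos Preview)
The paper does \emph{not} prove this proposition: it is quoted verbatim from \cite{LT05} (see the sentence ``The following two results were proved in \cite{LT05}'' immediately preceding Lemma \ref{twist three form} and Proposition \ref{Twisted Complex Reduction}). There is therefore no in-paper argument to compare your proposal against. What the paper does supply is an indirect indication of the strategy: in the proof of the Calabi--Yau reduction proposition in Section \ref{DH-theorem-for-g-Calabi-Yau} the author writes ``Using the argument given in the proof of \cite[Prop.~3.8, A.7]{LT05}, without the loss of generality, we can assume that the coadjoint orbit $\cO_a=0$, and assume that in an open neighborhood of the level set $\mu^{-1}(0)$, the moment one form is zero and the twisting three form $H$ is basic.'' This is exactly the normalization step you carry out in your first paragraph, so your approach is aligned with the intended one.

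Your sketch is essentially the standard Courant-reduction argument (as in \cite{LT05} and \cite{BCG05}): kill $\alpha$ by the $B$-transform $e^{-\Gamma}$, build $\widetilde{L}$ as $(L\cap\mathcal{K}^\perp_\C+\mathcal{K}_\C)/\mathcal{K}_\C$, do the rank count using $\xi_M-\sqrt{-1}\,d\mu^\xi\in L$, and read off integrability and type. Two small points to tighten. First, Lemma \ref{twist three form} produces $\Gamma$ only on a manifold with \emph{free} $G$-action, so $\Gamma$ lives on (a tubular neighborhood of) $Z=\mu^{-1}(\cO_a)$, not on all of $M$; your sentence ``I replace $\J$ by its $B$-transform'' should be localized accordingly, since otherwise $e^{-\Gamma}\J e^{\Gamma}$ is undefined globally. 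Second, in the type computation your identity $\pi_a(\widetilde{L})=(\pi(L)\cap T_\C Z)/K_\C$ needs a short justification: given $X\in\pi(L)\cap T_\C Z$ one must produce $\eta\in\mathrm{Ann}(K)_\C$ with $X+\eta\in L$, and this uses that the $d\mu^\xi$ span $N^*Z$ so one may correct an arbitrary lift $X+\eta_0\in L$ by elements of $L\cap\mathcal{K}_\C=\langle\xi_M-\sqrt{-1}\,d\mu^\xi\rangle$ to force $\eta(\xi_M)=0$ (this is where freeness, hence regularity of $a$, enters). With those two clarifications your argument is the expected one.
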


It is an interesting question if we can read off the cohomology of
the generalized complex quotient $M_a$ from the equivariant
cohomology of the Hamiltonian generalized complex manifold $M$. The
Kirwan map is the key to understanding this question.

Assume that $G$ acts freely on $\mu^{-1}(\cO_a)$. Let
$i:\mu^{-1}(\cO_a)\rightarrow M$ be the inclusion map and $\pi:
\mu^{-1}(\cO_a)\rightarrow M_a$ the quotient map. Then there is a
Kirwan map $ k: H_G(M)\rightarrow H(M_a)$ such that the following
diagram commutes:
\begin{diagram}
 H_G(M) &\rTo^{i^*}      & H_G(\mu^{-1}(\cO_a)) \\
 &\rdTo^{k}         &\dTo^{\cong}  &            \\
  & & H(M_a).
\end{diagram}
Here the vertical map is given by the inverse of the map \[\pi^*:
H(M_a) \rightarrow H_G(\mu^{-1}(\cO_a)),\,\,\,[\eta]\mapsto
[\pi^*\eta],\] which is an isomorphism due to Theorem
\ref{Cartan--Map}.

If the generalized complex structure $\J$ on $M$ is induced by a
symplectic structure, then it follows from the famous Kirwan
surjectivity theorem  \cite{Kir86} that the Kirwan map is
surjective. Let us give a very short explanation why the Kirwan
surjectivity theorem holds in the symplectic case. For simplicity,
let us only consider $S^1$ action here. In this case, the fixed
points set of the action are exactly the critical points set of the
moment map and the moment map is a Morse-Bott function. Besides, the
norm square of the moment map induces a Morse stratification on the
manifold $M$ from which one derives the Kirwan surjectivity theorem.
However, for Hamiltonian $S^1$ action on general complex manifolds,
in general the generalized moment map is not a Morse-Bott function.
Indeed, let $\mu: M\rightarrow \R$ be the generalized moment map and
$\alpha$ the moment one form. Then the condition $ -X
+\sqrt{-1}(d\mu+\sqrt{-1}\alpha) \in C^{\infty}(L)$ is equivalent to
that
\[Jd\mu=-X -\alpha,\] where $ X$ is the vector field generated by the
$S^1$ action, and $L$ is the $\sqrt{-1}$ eigenbundle of the
generalized complex structure. Let $\text{Crit}(\mu)$ be the set of
critical points of the moment map $\mu$ and $M^{S^1}$ the fixed
points set of the $S^1$ action. It is easy to see that
\[ \text{Crit}(\mu) \subset M^{S^1}.\]
 However, in general $\text{Crit}(\mu) \neq M^{S^1}$. For instance, in
\cite[Section 5]{Hu05} it was shown that there is a generalized
K\"ahler structure $(\J_1,\J_2)$ on $\C^2\setminus \{0\}$ and a
Hamiltonian $S^1$ action on $(\C^2\setminus \{0\},\J_1,\J_2)$ such that
for any regular value $a$ of the generalized moment map $\mu$, the
level set $\mu^{-1}(a)$ contains a fixed points submanifold.  It is
unclear to the author at the moment whether the Kirwan surjectivity
still holds or not for Hamiltonian actions on twisted generalized
complex manifolds in general.

Nevertheless, the image of the Kirwan map forms a subgroup of the
cohomology group of the generalized complex quotient which may
contain many interesting cohomology classes. For instance, it is an
interesting question when the generalized complex quotient has a
non-trivial twisting. Observe that by Lemma \ref{twist three form}
and Proposition \ref{Twisted Complex Reduction} the cohomology class
of the twisting three form $\widetilde{H}$ on $M_a$ is  exactly the
image of the equivariant cohomology class $[H+\alpha]$ under the
Kirwan map $k:H_G(M)\rightarrow H(M_a)$. This innocuous observation
leads to the following result.

\begin{proposition} Assume that $(M, \J)$ is an $H$-twisted generalized complex
manifold such that $H^{\text{odd}}(M)=0$. And assume there is a
Hamiltonian action of a compact connected Lie group $G$ on $M$ with
generalized moment map $\mu$ and moment one form $\alpha$. Let
$\cO_a$ be a co-adjoint orbit through $a \in \g^*$ so that $G$ acts
freely on $\mu^{-1}(\cO_a)$. Then the twisted three form
$\widetilde{H}$ on $M_a=\mu^{-1}(\cO_a)/G$, as defined in Lemma
\ref{twist three form}, is cohomologically trivial.
\end{proposition}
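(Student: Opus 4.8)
The plan is to exploit the observation recorded just before the statement, namely that $[\widetilde H]=k([H+\alpha])$, where $k\colon H_G(M)\to H(M_a)$ is the Kirwan map and $[H+\alpha]\in H_G^3(M)$ is the class of the equivariantly closed three form in the \emph{untwisted} Cartan model. Since $k$ is a homomorphism, it suffices to prove the stronger statement that $[H+\alpha]=0$ already in $H_G^3(M)$, and then apply $k$. In this way the generalized complex structure, the Hamiltonicity, and the freeness of the action on $\mu^{-1}(\cO_a)$ enter only in producing the class $[H+\alpha]$ and in defining $M_a$ and $\widetilde H$ via Lemma \ref{twist three form}; the vanishing itself is purely a statement about the degree-three equivariant de Rham cohomology of $M$.

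First I would use the vanishing of $H^3(M)$. As $H\in\Omega^3(M)^G$ is closed and $H^3(M)=0$, we have $H=d\beta$, and averaging $\beta$ over the compact group $G$ (using the invariance of $H$) we may take $\beta\in\Omega^2(M)^G\subset\Omega_G^{0,2}$. Replacing $H+\alpha$ by the cohomologous equivariant form $\eta:=H+\alpha-d_G\beta$ and reading off bidegrees, the form-degree-three component of $\eta$ is $H-d\beta=0$, so $\eta$ lies entirely in $\Omega_G^{1,2}=(S^1\g^*\otimes\Omega^1)^G$; that is, $\eta$ is a $G$-invariant $\g^*$-valued one form that is still equivariantly closed. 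Next I would use the vanishing of $H^1(M)$. Splitting $d_G\eta=d\eta+d'\eta$ into its components in $\Omega_G^{1,3}$ and $\Omega_G^{2,2}$, equivariant closedness forces $d\eta=0$; hence for every $\xi\in\g$ the ordinary one form $\eta(\xi)$ is closed, and since $H^1(M)=0$ it is exact. Because $\xi\mapsto\eta(\xi)$ is linear and $G$-equivariant with values in the exact one forms, I would choose a linear section of $d\colon C^\infty(M)\to\Omega^1_{\mathrm{exact}}(M)$ and average over $G$ to obtain $f\in(S^1\g^*\otimes\Omega^0)^G=\Omega_G^{1,1}$ with $df(\xi)=\eta(\xi)$. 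As $f$ has form degree zero, $d'f=0$, so $d_Gf=df=\eta$; thus $\eta$ is equivariantly exact, whence $[H+\alpha]=[\eta]=0$ in $H_G^3(M)$ and $[\widetilde H]=k(0)=0$.

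The one genuinely careful step is the coherent choice of primitives: one must ensure the lift $\xi\mapsto f(\xi)$ can be taken simultaneously linear in $\xi$ and $G$-invariant, which is exactly what a linear section of $d$ followed by averaging over the compact group $G$ provides (if $M$ is disconnected one lifts componentwise, the remaining ambiguity being only locally constant functions). Conceptually this Cartan-model computation merely unwinds, in total degree three, the Serre spectral sequence of $M\hookrightarrow M_G\to BG$: its $E_2$ page $H^p(BG)\otimes H^q(M)$ vanishes in every bidegree with $p+q=3$, since $H^{\mathrm{odd}}(BG;\R)=0$ for compact connected $G$ and $H^1(M)=H^3(M)=0$, so in fact $H_G^3(M)=0$ outright. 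I would nonetheless present the explicit argument above, since it is self-contained and makes the role of the moment one form $\alpha$ transparent.
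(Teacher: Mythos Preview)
Your argument is correct and follows essentially the same route as the paper: both proofs reduce to showing $[H+\alpha]=0$ in $H_G^3(M)$ and then apply the Kirwan map, and both obtain this vanishing from the spectral sequence of the Cartan complex, using that $H^{\mathrm{odd}}(M)=0$ forces the odd-degree equivariant cohomology to vanish. The only difference is cosmetic: the paper cites the general degeneration result \cite[Thm.~6.5.3]{GS99} to conclude $H_G^{\mathrm{odd}}(M)\cong (S\g^*)^G\otimes H^{\mathrm{odd}}(M)=0$ in one line, whereas you unwind the filtration by hand in total degree three (and then note the spectral sequence interpretation yourself at the end).
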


\begin{proof} It is  shown in \cite[Thm. 6.5.3]{GS99} that if $H^{\text{odd}}(M)=0$ then the
spectral sequence of the Cartan complex of the $G$-manifold $M$
degenerates at the $E_1$ stage. In particular, we have that
$H^{\text{odd}}_G(M) \cong \bigoplus_{p+q=\text{odd}}
E_1^{p,q}=H^{\text{odd}}(M)\otimes (S\frak{g}^*)^G=0$. This shows
clearly that $H_G(M)$ contains no non-trivial odd dimensional
equivariant cohomology classes. We conclude that $[H+\alpha]=0$ and
so $[\widetilde{H}]=k([H+\alpha])=0$.

\end{proof}

\begin{remark} Physicists are interested in producing explicit examples of gauged
sigma models. When there is no flux, i.e., the target manifold does
not have a cohomologically non-trivial twisting, a very useful tool
producing such examples is to use the so called gauged linear model
which corresponds to the K\"ahler quotient of a K\"ahler vector
space. Naturally, physicists had hoped that one can produce explicit
examples of generalized K\"ahler quotients of generalized K\"ahler
vector spaces such that the quotient spaces have a cohomologically
non-trivial twisting. The above result asserts that this is not
possible since $H^{\text{odd}}(V)=0$ for any vector space $V$.
\end{remark}

We finish this section by showing that in the framework of the
twisted equivariant cohomology there is also a well-defined Kirwan
map. This strongly suggests that the results we obtained in Section
\ref{t-eq-theory-t-complex} do help us understand the cohomology of
the generalized complex quotient. To describe the Kirwan map
in this setting, we will need Proposition \ref{twisted-version-extended-Cartan-theorem}
in the Appendix \ref{generalized-eq-diff-forms}.




\begin{proposition}\label{twisted-Kirwan-map} Assume a compact connected Lie group
$G$ acts on an $H$-twisted generalized complex manifold $M$ with
generalized moment map $\mu:M \rightarrow \frak{g}^*$ and moment one
form $\alpha \in \Omega^1(M,\frak{g}^*)$. Assume that $G$ acts
freely on the level set $\mu^{-1}(0)$. Then there is a Kirwan map $
k: H_G(M,H+\alpha) \rightarrow H(M_0,\widetilde{H})$ such that the
following diagram commutes:
\begin{diagram}
 H_G(M, H+\alpha) &\rTo^{i^*}      & H_G(\mu^{-1}(0),i^*(H+\alpha) ) \\
 &\rdTo^{k}         &\dTo^{\cong}  &            \\
  & & H(M_0,\widetilde{H}),
\end{diagram}
where $i^*: H_G(M, H+\alpha) \rightarrow
H_G(\mu^{-1}(0),i^*(H+\alpha))$ is induced by the inclusion map
$i:\mu^{-1}(0)\rightarrow M$, $M_0=\mu^{-1}(0)/G$ is the generalized
complex quotient taken at the zero level of the generalized moment
map, and $\widetilde{H}$ is the twisting three form on $M_0$
obtained as in Lemma \ref{twist three form} and Proposition
\ref{Twisted Complex Reduction}.

\end{proposition}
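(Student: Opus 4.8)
The plan is to realize the Kirwan map as the composite of restriction to the zero level set $N := \mu^{-1}(0)$ with a twisted Cartan isomorphism for the resulting free action. First I would note that the inclusion $i\colon N \hookrightarrow M$ is $G$-equivariant, so pullback of equivariant differential forms defines a map $i^*\colon \widehat{\Omega}_G(M) \to \widehat{\Omega}_G(N)$ which commutes with $d_G$ and with wedging by the equivariantly closed three form $H+\alpha$; since $i^*(H+\alpha)$ is again equivariantly closed, $i^*$ is a chain map between $(\widehat{\Omega}_G(M), d_{G,H+\alpha})$ and $(\widehat{\Omega}_G(N), d_{G,i^*(H+\alpha)})$. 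As $i^*$ preserves the $a$-adic filtration it is continuous for the completions, and passing to cohomology yields the horizontal arrow
\[ i^*\colon H_G(M,H+\alpha) \longrightarrow H_G(\mu^{-1}(0), i^*(H+\alpha)). \]

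Next, since $G$ acts freely on $N$, I would invoke the twisted, completed Cartan isomorphism established in the appendix, namely Proposition \ref{twisted-version-extended-Cartan-theorem}, applied to the free $G$-manifold $N$ together with the equivariantly closed three form $i^*(H+\alpha)$. This furnishes the vertical isomorphism
\[ H_G(\mu^{-1}(0), i^*(H+\alpha)) \xrightarrow{\ \cong\ } H(M_0, \widetilde H), \]
and I would then \emph{define} the Kirwan map to be the composite $k := (\text{this isomorphism}) \circ i^*$, so that the triangle in the statement commutes tautologically.

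The one substantive point to check is that the twisting three form produced on $M_0$ by the twisted Cartan map is precisely the form $\widetilde H$ of Lemma \ref{twist three form}. Both arise from the same basic-form descent: applying Lemma \ref{twist three form} to the free $G$-manifold $N$ with invariant closed three form $i^*H$ and equivariant one-form $i^*\alpha$ produces, after fixing a connection, a form $\Gamma \in \Omega^2(N)^G$ with $\iota_{\xi_N}\Gamma = (i^*\alpha)^\xi$ such that $i^*(H+\alpha) + d_G\Gamma$ is closed and basic, hence descends to $\widetilde H \in \Omega^3(M_0)$; this is exactly the representative through which the twisted Cartan map factors, and for $a = 0$ it coincides with the twisting supplied by Proposition \ref{Twisted Complex Reduction}. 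Finally, by Lemma \ref{basic-facts-twisted-equivariant-cohomology} a different choice of connection alters $\widetilde H$ only within its cohomology class, so the target $H(M_0, \widetilde H)$ is well defined up to isomorphism. The genuine work is entirely absorbed into the appendix's twisted Cartan theorem; within the present proposition the main obstacle is simply the verification that the descended three form agrees with $\widetilde H$ and that $i^*$ descends correctly to the $a$-adically completed complexes.
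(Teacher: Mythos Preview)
Your approach is correct and essentially identical to the paper's: restrict to $\mu^{-1}(0)$, pass to a basic representative of the twisting, and apply the twisted Cartan isomorphism of Proposition~\ref{twisted-version-extended-Cartan-theorem}. The only quibble is one of ordering: Proposition~\ref{twisted-version-extended-Cartan-theorem} as stated requires the twisting three form to be \emph{basic}, not merely equivariantly closed, so the passage to the basic representative $i^*(H+\alpha)+d_G\Gamma=\pi^*\widetilde H$ (which you place in your third paragraph as a compatibility check) is logically a prerequisite for invoking that proposition; the paper accordingly first uses Lemma~\ref{basic-facts-twisted-equivariant-cohomology} to identify $H_G(\mu^{-1}(0),i^*(H+\alpha))\cong H_G(\mu^{-1}(0),\pi^*\widetilde H)$ and only then applies the Cartan map.
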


\begin{proof} By Lemma \ref{twist three form} and Proposition \ref{Twisted Complex
Reduction}, we can find a closed three form $\eta \in
\Omega^3(\mu^{-1}(0))$ such that $[\eta]=[i^*(H+\alpha)]$ and such
that $\eta =\pi^*\widetilde{H}$. Since $\eta$ and $i^*(H+\alpha)$
represents the same cohomology class, we have
\[H_G(\mu^{-1}(0),\eta ) \cong H_G(\mu^{-1}(0),i^*(H+\alpha) ).\]
Corollary \ref{twisted-Kirwan-map} now follows easily from
Proposition \ref{twisted-version-extended-Cartan-theorem}.
\end{proof}

\section{The Duistermaat-Heckman theorem for generalized Calabi-Yau
manifolds}\label{DH-theorem-for-g-Calabi-Yau}

Consider the Hamiltonian action of a compact connected Lie group $G$
on an $H$-twisted generalized Calabi-Yau manifold $M$. Our first
observation is that if the action is free at a level set of the
generalized moment map, then the generalized complex quotient taken
at the level set inherits a generalized Calabi-Yau structure. The
same result, under the assumption that both the twisting three form
$H$ and the moment one form vanish, has already been proved by Nitta
\cite{NY06}.  We will make use of the following fact established in
the proof of \cite[Thm. A]{NY06}.

\begin{lemma} \label{restriction-of-g-calabi-yau}Suppose that there
is a Hamiltonian action of a compact connected Lie group $G$ on an
$H$-twisted generalized Calabi-Yau manifold $M$ with generalized
moment map $\mu:M \rightarrow \frak{g}^*$ and zero moment one form.
Then the restriction of the generalized Calabi-Yau structure $\rho$
to $\mu^{-1}(0)$ is nowhere vanishing.
\end{lemma}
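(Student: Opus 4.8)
The statement is pointwise in nature, so the plan is to fix a point $m$ in the level set $Z=\mu^{-1}(0)$ and show that the pullback $i^{*}\rho$ does not vanish at $m$, where $i\colon Z\hookrightarrow M$ is the inclusion. (I work where $0$ is a regular value of $\mu$; this holds automatically once $G$ acts freely on $Z$, because a real relation $\sum_{j}c_{j}\,d\mu^{\xi_{j}}=0$ would put the nonzero real vector $\sum_{j}c_{j}\,\xi_{j,M}$ into $L$, hence into $L\cap\overline{L}=\{0\}$, a contradiction.) First I would record the infinitesimal form of the Hamiltonian condition. Because $\alpha=0$ and $\rho$ is a pure spinor for $\J$, the $\sqrt{-1}$-eigenbundle $L$ is exactly the Clifford annihilator of $\rho$, so the defining condition $\xi_{M}-\sqrt{-1}\,d\mu^{\xi}\in C^{\infty}(L)$ becomes
\[ \iota_{X_{j}}\rho=\sqrt{-1}\,\nu^{j}\wedge\rho,\qquad j=1,\dots,k, \]
with $X_{j}=\xi_{j,M}$ and $\nu^{j}=d\mu^{\xi_{j}}$ for a basis $\xi_{1},\dots,\xi_{k}$ of $\g$. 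By equivariance of $\mu$ each $\nu^{i}(X_{j})$ is a linear function of $\mu$ and hence vanishes on $Z$, while the $\nu^{j}$ are linearly independent there and so span the conormal bundle of $Z$.

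Next I would reduce the problem to the nonvanishing of a single exterior product. Since the $\nu^{j}$ span the conormal space at $m$, the pullback $i^{*}\rho$ vanishes at $m$ exactly when $\rho$ lies in the ideal generated by $\nu^{1},\dots,\nu^{k}$; extending the $\nu^{j}$ to a coframe shows that this in turn is equivalent to $\nu^{1}\wedge\cdots\wedge\nu^{k}\wedge\rho=0$. Hence it suffices to prove that $\nu^{1}\wedge\cdots\wedge\nu^{k}\wedge\rho\neq0$ at $m$. The crucial step is to identify this top wedge with a Clifford word built from the conjugated generators $\bar{a}_{j}=X_{j}+\sqrt{-1}\,\nu^{j}\in\overline{L}$. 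Using $\iota_{X_{j}}\rho=\sqrt{-1}\,\nu^{j}\wedge\rho$ together with $\nu^{i}(X_{j})=0$ on $Z$, a short induction on the number of factors gives
\[ \bar{a}_{1}\cdots\bar{a}_{k}\cdot\rho=(2\sqrt{-1})^{k}\,\nu^{1}\wedge\cdots\wedge\nu^{k}\wedge\rho. \]

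Finally I would invoke purity to see that the left-hand side is nonzero. The generators $\bar{a}_{1},\dots,\bar{a}_{k}$ lie in $\overline{L}$ and are linearly independent because their vector parts $X_{j}$ are (freeness); and since $L\cap\overline{L}=\{0\}$, no nontrivial combination of them lies in $L$, so they remain independent modulo $L$. In the Fock-space model of the irreducible spinor module, where $L$ acts by annihilation operators and $\overline{L}$ by creation operators on the vacuum line $\langle\rho\rangle$, the linear map $\wedge^{\bullet}\overline{L}\to\wedge^{\bullet}T^{*}_{\C}M$ determined by $\bar{a}_{i_{1}}\wedge\cdots\wedge\bar{a}_{i_{p}}\mapsto\bar{a}_{i_{1}}\cdots\bar{a}_{i_{p}}\cdot\rho$ is an isomorphism; hence $\bar{a}_{1}\cdots\bar{a}_{k}\cdot\rho$ corresponds to the nonzero element $\bar{a}_{1}\wedge\cdots\wedge\bar{a}_{k}\in\wedge^{k}\overline{L}$ and is itself nonzero. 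Combined with the displayed identity this yields $\nu^{1}\wedge\cdots\wedge\nu^{k}\wedge\rho\neq0$, so $i^{*}\rho\neq0$ at $m$; as $m$ was arbitrary, $\rho$ restricts to a nowhere vanishing form on $\mu^{-1}(0)$.

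I expect the main obstacle to be precisely the passage through this Clifford word: guessing the identity above, and then extracting nonvanishing from the pure-spinor hypothesis $(\rho,\overline{\rho})\neq0$, equivalently $L\cap\overline{L}=\{0\}$. That hypothesis is exactly what forbids the creation operators $\bar{a}_{j}$ from annihilating $\rho$, and without it the restriction could drop rank; the naive $k=1$ version (where $\nu\wedge\rho=0$ immediately forces the real covector $\nu$ into $L\cap\overline{L}$) must be upgraded to arbitrary rank through this Fock-space argument.
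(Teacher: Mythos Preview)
Your argument is correct. The paper does not actually prove this lemma: it records it as a fact ``established in the proof of \cite[Thm.~A]{NY06}'' and only remarks that Nitta's argument ``uses only the linear algebra of generalized complex structures and so applies to the twisted case as well.'' What you have written is precisely such a pointwise linear-algebra argument, and it supplies the details the paper defers to the reference.

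A couple of minor comments. First, the lemma as stated in the paper does not explicitly assume that $G$ acts freely on $\mu^{-1}(0)$ (equivalently, that $0$ is a regular value); you impose this and justify it via $L\cap\overline{L}=\{0\}$. That is entirely reasonable, since the lemma is only ever invoked in the paper under the freeness hypothesis, and your derivation that freeness forces the $d\mu^{\xi_j}$ to be independent is clean. Second, your induction for the identity
\[
\bar{a}_{1}\cdots\bar{a}_{k}\cdot\rho=(2\sqrt{-1})^{k}\,\nu^{1}\wedge\cdots\wedge\nu^{k}\wedge\rho
\]
goes through exactly because $\nu^{i}(X_{j})=d\mu^{\xi_i}(\xi_{j,M})$ vanishes on $Z$ by equivariance of $\mu$ evaluated at $\mu=0$; this is the only place the specific level $0$ enters, and you flag it correctly. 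The Fock-space step is the standard Chevalley picture (the paper cites \cite{Che97} for this): since $\overline{L}$ is isotropic, its elements anticommute in the Clifford algebra, and the resulting map $\wedge^{\bullet}\overline{L}\to\wedge^{\bullet}T^*_{\C}M$, $\omega\mapsto\omega\cdot\rho$, is an isomorphism because $L\oplus\overline{L}$ is a polarization; so independence of the $\bar{a}_j$ gives nonvanishing of $\bar{a}_1\cdots\bar{a}_k\cdot\rho$ as you claim.
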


\begin{remark} Although \cite{NY06} assume the
generalized Calabi-Yau structure is untwisted, the proof of the
above fact given by Nitta uses only the linear algebra of generalized
complex structures and so applies to the twisted case as well.

\end{remark}

\begin{proposition} Assume that there is a Hamiltonian action of a
compact connected Lie group $G$ on an $H$-twisted generalized
Calabi-Yau manifold $(M,\J)$ with generalized moment map
$\mu:M\rightarrow \frak{g}^*$ and moment one form $\alpha \in
\Omega^1(M,\frak{g}^*)$, preserving the generalized Calabi-Yau
structure $\rho$. And assume   $\cO_a$ is a co-adjoint orbit through
$a \in \g^*$ so that $G$ acts freely on $\mu^{-1}(\cO_a)$. Then
given a connection on $\mu^{-1}(\cO_a)$, the generalized complex
quotient $M_a$ inherits an $\widetilde{H}$-twisted generalized
Calabi-Yau structure $\widetilde{\rho}$, such that
$\pi^*\widetilde{\rho}=\rho \mid_{\mu^{-1}(\cO_a)}$, where $\pi:
\mu^{-1}(\cO_a) \rightarrow M_a$ is the quotient map,
$\widetilde{H}$ is defined as in Lemma \ref{twist three form}. Up to
$B$-transform, $\widetilde{\rho}$ is independent of the choice of
connections. Finally, for all $m \in \mu^{-1}(\cO_a)$,
$$\type(\widetilde{\mathcal{J}})_{[m]} = \type(\mathcal{J})_m,$$
where $\widetilde{J}$ is the quotient generalized complex structure
on $M_a$.
\end{proposition}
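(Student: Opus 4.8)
The plan is to deduce the twisted statement from Nitta's untwisted, zero--moment--one--form result (Lemma~\ref{restriction-of-g-calabi-yau}) by absorbing both the moment one form $\alpha$ and the twisting correction into a single $B$--transform. By Proposition~\ref{Twisted Complex Reduction} the quotient $M_a$ already carries a reduced $\widetilde{H}$--twisted generalized complex structure $\widetilde{\J}$ with the asserted type equality and connection--independence (up to $B$--transform); the only thing left is to exhibit a nowhere vanishing $d_{\widetilde{H}}$--closed pure spinor for it. So I would concentrate entirely on producing that pure spinor.

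First I would fix a connection on $\mu^{-1}(\cO_a)$ and take the invariant two form $\Gamma\in\Omega^2(M)^G$ of Lemma~\ref{twist three form}, which satisfies $\iota_{\xi_M}\Gamma=\alpha^\xi$ and makes $H+d\Gamma$ closed, basic, and equal to the descent datum for $\widetilde{H}$. I would then replace $\J$ by its $B$--transform $\J':=e^{-\Gamma}\J e^{\Gamma}$. Following the convention of Lemma~\ref{Btransformoperator}, $\J'$ is an $(H+d\Gamma)$--twisted generalized complex structure whose canonical pure spinor is $\rho':=e^{\Gamma}\rho$, and a one--line computation gives $d(e^{\Gamma}\rho)=(H+d\Gamma)\wedge e^{\Gamma}\rho$, so $\rho'$ is a nowhere vanishing $d_{H+d\Gamma}$--closed section. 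The point of this transform is that it kills the moment one form: applying $e^{-\Gamma}$ to the Hamiltonian section $\xi_M-\newi(d\mu^\xi+\newi\alpha^\xi)\in C^\infty(L)$ and using $\iota_{\xi_M}\Gamma=\alpha^\xi$ turns it into $\xi_M-\newi d\mu^\xi\in C^\infty(e^{-\Gamma}L)$. Hence $\J'$ is again Hamiltonian, with the \emph{same} generalized moment map $\mu$ but with zero moment one form.

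Now Nitta's Lemma~\ref{restriction-of-g-calabi-yau} applies verbatim to $\J'$: the restriction of $\rho'$ to the zero level set is nowhere vanishing. The crucial extra point I must check is that $\rho'$ restricts to a \emph{basic} form. Invariance is immediate since $\rho$ and $\Gamma$ are $G$--invariant. For horizontality, the zero--moment--one--form condition gives $\iota_{\xi_M}\rho'=\newi d\mu^\xi\wedge\rho'$; pulling back along $i:\mu^{-1}(0)\hookrightarrow M$ and using that $i^*\mu^\xi$ is constant on the level set yields $\iota_{\xi_M}(i^*\rho')=0$ for every $\xi\in\g$. Thus $i^*\rho'$ is basic and descends to a nowhere vanishing $\widetilde{\rho}$ on $M_0$ with $\pi^*\widetilde{\rho}=e^{\Gamma}\rho\big|_{\mu^{-1}(0)}$ (which reduces to $\rho|_{\mu^{-1}(0)}$ exactly when $\alpha=0$, recovering Nitta's normalization). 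Since $\pi^*\widetilde{H}=(H+d\Gamma)|_{\mu^{-1}(0)}$ and $\pi^*$ is injective on forms, the identity $d_{H+d\Gamma}(e^{\Gamma}\rho)=0$ descends to $d_{\widetilde{H}}\widetilde{\rho}=0$; and because $\widetilde{\rho}$ is the pure spinor of the reduction of $\J'$, which differs from $\widetilde{\J}$ only by a $B$--transform on $M_a$, it is a pure spinor for $\widetilde{\J}$. This shows $\widetilde{\J}$ is $\widetilde{H}$--twisted generalized Calabi--Yau.

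For a general coadjoint orbit I would reduce at the point $a$ for the stabilizer $G_a$, using the standard identification $\mu^{-1}(\cO_a)/G\cong\mu^{-1}(a)/G_a$; on $\mu^{-1}(a)$ the moment map is the constant $a$, so $i^*d\mu^\xi=0$ and the horizontality computation goes through unchanged. The remaining assertions---connection--independence up to $B$--transform and $\type(\widetilde{\J})_{[m]}=\type(\J)_m$---are inherited directly from Proposition~\ref{Twisted Complex Reduction}, since $B$--transforms preserve type. The main obstacle, and the real content beyond Nitta's theorem, is precisely the basicness of the restricted spinor when $\alpha\neq 0$: it is the cancellation of $\alpha$ in $\iota_{\xi_M}\rho'$ that forces the use of the transform $e^{\Gamma}$ and simultaneously produces exactly the twisting $H+d\Gamma$ descending to $\widetilde{H}$, so that the closedness and the horizontality are controlled by the same form $\Gamma$.
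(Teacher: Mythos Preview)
Your approach is essentially the same as the paper's: both absorb the moment one form and the non--basic part of $H$ into a $B$--transform by the form $\Gamma$ of Lemma~\ref{twist three form}, then show the (transformed) spinor restricts to a basic, nowhere vanishing, $d_{\widetilde H}$--closed form on the level set and descend. The paper hides the $B$--transform behind the phrase ``without loss of generality'' and a citation to \cite[Prop.~3.8, A.7]{LT05}; you spell it out, which is why you obtain $\pi^*\widetilde\rho = e^{\Gamma}\rho\big|_{\mu^{-1}(0)}$ rather than $\rho\big|_{\mu^{-1}(0)}$. That discrepancy is harmless and you flag it correctly: the statement $\pi^*\widetilde\rho=\rho|_{\mu^{-1}(\cO_a)}$ in the Proposition is only literally true after the same transform, and the ``up to $B$--transform'' clause absorbs the difference.

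There is, however, one genuine gap. You write ``because $\widetilde\rho$ is the pure spinor of the reduction of $\J'$'', but this is precisely the point that has not yet been established. Nitta's Lemma~\ref{restriction-of-g-calabi-yau} gives only that $\rho'|_{\mu^{-1}(0)}$ is nowhere vanishing; it does not say the descent is annihilated by the eigenbundle $\widetilde L$ of the reduced structure. The paper closes this by taking the explicit description of $\widetilde L$ from \cite[Prop.~3.8]{LT05} and checking directly that every section of $\widetilde L$ kills $\widetilde\rho$; maximality of $\widetilde L$ as an isotropic then forces $\widetilde L = L_{\widetilde\rho}$. You need to insert this verification. Also note that your parenthetical ``which differs from $\widetilde\J$ only by a $B$--transform on $M_a$'' is slightly off: for the fixed connection you chose, the reduction of $\J'$ \emph{is} $\widetilde\J$ (the construction in Proposition~\ref{Twisted Complex Reduction} already passes through $\J'$), so no further $B$--transform on the quotient is needed at this step.
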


\begin{proof} It suffices to show that the restriction of $\rho$ to
$\mu^{-1}(\cO_a)$ descends to a generalized Calabi-Yau structure
$\widetilde{\rho}$ on $M_a$ such that $\pi^*\widetilde{\rho}=\rho
\mid_{\mu^{-1}(\cO_a)}$. Using the argument given in the proof of
\cite[Prop. 3.8, A.7]{LT05}, without the loss of generality, we can
assume that the coadjoint orbit $\cO_a=0$, and assume that in an
open neighborhood of the level set $ \mu^{-1}(0)$,  the moment one
form is zero and the twisting three form $H$ is basic so that it
descends to the twisting three form $\widetilde{H}$ on the quotient
$M_0:=\mu^{-1}(0)/G$. By the definition of the generalized moment
map, we have that for any $\xi \in \frak{g},
\xi_M-\sqrt{-1}d\mu^{\xi} \in C^{\infty}(L)$, where $L$ is the
$\sqrt{-1}$ eigenbundle of the generalized complex structure $\J$.
By the definition of a generalized Calabi-Yau structure, we have
\[(\xi_M -\sqrt{-1}d\mu^{\xi})\cdot
\rho=\iota_{\xi_M}\rho-\sqrt{-1}d\mu^{\xi}\wedge \rho=0.\] It
follows that $\iota_{\xi_M}\rho
\mid_{\mu^{-1}(0)}=\left(\sqrt{-1}d\mu^{\xi}\wedge \rho\right)
\mid_{\mu^{-1}(0)}=0$ for any $\xi \in \frak{g} $. Therefore $\rho
\mid_{f^{-1}(0)}$ is a basic form and so descends to a form
$\widetilde{\rho}$ such that $\pi^*\widetilde{\rho}=\rho
\mid_{\mu^{-1}(0)}$. Since $\rho$ is $d_H$-closed,
$\widetilde{\rho}$ is $d_{\widetilde{H}}$-closed; moreover, since by
Lemma \ref{restriction-of-g-calabi-yau} $\rho \vert_{\mu^{-1}(0)}$
is nowhere vanishing, $\widetilde{\rho}$ is nowhere vanishing as
well. Thus
\[L_{\widetilde{\rho}}(x)=\{A \in T_{\C,x}M_0\oplus T^*_{\C,x}M_0
\,\vert\, A \cdot \widetilde{\rho} =0\} \] is an isotropic subspace
of $T_{\C,x}M_0\oplus T_{\C,x}^*M_0$, $x\in M_0$.  Finally, let
$\widetilde{L}$ be the $\sqrt{-1}$ eigenbundle of the quotient
generalized complex structure on $M_0$. Using the description of
$\widetilde{L}$ in \cite[Prop. 3.8]{LT05} it is straightforward
to check that $\widetilde{\rho}$ is annihilated by the Clifford
action of the sections of $\widetilde{L}$. Hence we have
$\widetilde{L}_x\subset L_{\widetilde{\rho}}(x)$,  $x\in M_0$.
However, $L_x$ is a maximal isotropic subspace of $T_{\C,x}M_0\oplus
T^*_{\C,x}M_0$. So we must have
$\widetilde{L}_x=L_{\widetilde{\rho}}(x)$, $x \in M$. In other
words, $\widetilde{\rho}$ is the pure spinor associated to the
quotient generalized complex structure $\widetilde{J}$ on $M_0$.
This finishes the proof that $\widetilde{\rho}$ is a generalized
Calabi-Yau structure.
\end{proof}

\begin{example}\label{Hamiltonian-symplectic} Let $G$ act on a symplectic manifold $(M, \omega)$ with moment map
$\Phi \colon M \to \g^*$, that is, $\Phi$ is equivariant and $
\iota_{\xi_M} \omega = - d\Phi^\xi$ for all $\xi \in \g$. Then the
action of $G$ also preserves the generalized complex structure
$\mathcal{J}_\omega$, where $\J_{\omega}$ is defined as in Example
\ref{symplectic-g-calabi-Yau},  and it is straightforward to check
that the action on the generalized complex manifold
$(M,\J_{\omega})$ is Hamiltonian with generalized moment map $\Phi$
and trivial moment one form. By Example
\ref{symplectic-g-calabi-Yau} $\omega$ defines a generalized
Calabi-Yau structure $e^{i\omega}$. Suppose that $G$ acts freely on
the level set $\mu^{-1}(0)$. Then the restriction of $e^{i\omega}$
to the level set $\mu^{-1}(0)$ descends to the quotient generalized
Calabi-Yau structure $e^{i\omega_0}$ on $M_0=\mu^{-1}(0)/G$, where
$\omega_0$ is the reduced symplectic form on $M_0$.

\end{example}

 On the generalized complex quotient $M_a$, the quotient
generalized Calabi-Yau structure $\widetilde{\rho}_a$ depends on the
choice of a connection, c.f., \cite[Prop. A.7]{LT05}. However,
a different choice of the connection results in a new quotient
generalized Calabi-Yau structure which is given by
$e^{-B}\widetilde{\rho}_a$ for some closed two form $B \in
\Omega^2(M_a)$. It follows immediately from Lemma
\ref{invariance-mukai-pairing} that, on the quotient $M_a$,
$(\widetilde{\rho}_a,\overline{\widetilde{\rho}}_a)$ is
well-defined, independent of the choice of connections, where
$(\cdot,\cdot)$ denotes the Mukai pairing on the quotient space
$M_a$.

From now on we consider the Hamiltonian action of a compact connected torus $T$ on
an $H$-twisted generalized complex manifold $M$ with a proper generalized moment map
$\mu$. Assume that $a_0\in \frak{t}^*$ such that
the action of $T$ on the level set $\mu^{-1}(a_0)$ is free. Since by
Lemma \ref{free-orbit} below the set on which $T$ acts freely is
open and since the generalized moment map is proper, there is an
open subset $U\subset \frak{t}^*$ such that for any $a \in U$ the
action of $T$ on $\mu^{-1}(a)$ is free.  We define the {\bf
Duistermatt-Heckman function} $f$ on $U \subset \frak{t}^*$ by the
following formula.
\begin{equation} \label{DHfunction} f(a)=\dfrac{(-1)^{n+\frac{k(k+1)}{2}}(2\pi)^k}{(2i)^{n-k}} \int_{M_a}
(\widetilde{\rho}_a,\overline{\widetilde{\rho}}_a),\end{equation}
where $\widetilde{\rho}_a$ is the quotient generalized Calabi-Yau
structure on $M_a$, and $M_a$ is given the orientation induced by
$(\widetilde{\rho}_a,\overline{\widetilde{\rho}}_a)$.

\begin{lemma}\label{free-orbit}(\cite{GS84})  Assume that a torus $T$ acts on a connected manifold $M$
 effectively. Then the set $M'$ on which $T$ acts freely is equal to
 the complement of a locally finite union of closed
 manifolds of codimension $\geq 2$. In particular, $M'$ is open,
 connected, dense, and $M\setminus M'$ has measure zero.
 \end{lemma}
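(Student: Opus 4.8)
The plan is to prove the statement locally through the slice theorem and then globalize. Fix $p \in M$, let $H = T_p$ be its (closed) isotropy group, and equip $M$ with a $T$-invariant Riemannian metric. The tube theorem identifies a $T$-invariant neighbourhood of the orbit $T\cdot p$ equivariantly with the associated bundle $T\times_H V$, where the normal slice $V$ carries the orthogonal linear slice representation of $H$. Because $T$, and hence $H$, is abelian, one checks directly that the isotropy group of $[t,v]$ is $H_v=\{s\in H : sv = v\}$ and that, for any closed $K\subseteq H$, one has $(T\times_H V)^K = T\times_H V^K$. Decomposing $V$ into irreducible real $H$-subrepresentations and writing $\chi$ for the associated weights, the group $H_v$ is the intersection of the kernels of those $\chi$ whose corresponding component of $v$ is nonzero; since there are only finitely many weights, only finitely many subgroups $K\subseteq H$ occur as isotropy groups on the tube, and the non-free locus inside the tube is the finite union of the closed submanifolds $(T\times_H V)^K$ over the nontrivial $K$ so arising.

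Next I would establish the codimension bound, which is the crux of the lemma. Let $K\neq\{e\}$ be an isotropy group and $x\in M^K$; the normal space to $M^K$ at $x$ is the subrepresentation $W = T_xM/(T_xM)^K$ on which $K$ acts with no nonzero fixed vector, and $\codim M^K = \dim_\R W$. This dimension is positive: were it zero, $K$ would act trivially on a neighbourhood, and since $T$ acts by isometries of the connected manifold $M$ this would force $K$ to act trivially on all of $M$, contradicting effectiveness. It also cannot equal $1$: a one-dimensional orthogonal representation with no fixed vector is a reflection, so some element of $K$ would act on $T_xM$ as the identity on $(T_xM)^K$ and as $-1$ on $W$, hence with determinant $-1$; but $M$ is oriented by the volume form \eqref{volume-form} and $T$ is connected, so every element of $T$ preserves orientation and no reflection can occur. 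Therefore $\codim M^K\geq 2$.

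Globally, $M\setminus M'=\bigcup_{K\neq\{e\}}M^K$, the union running over the subgroups occurring as isotropy groups; each $M^K$ is a closed submanifold (fixed-point set of a compact group action) of codimension $\geq 2$, and the local analysis shows the family is locally finite. Consequently $M'$ is open, and $M\setminus M'$, a locally finite union of submanifolds of positive codimension, has measure zero, so $M'$ is dense. For connectedness I would join two points of $M'$ by a path in the connected manifold $M$ and then, keeping the endpoints fixed, perturb it into general position: since each $M^K$ has codimension $\geq 2$ and the family is locally finite, a generic path is disjoint from $\bigcup_K M^K$ (a one-dimensional image meets a submanifold of codimension $\geq 2$ transversally only in the empty set), so the perturbed path lies in $M'$; hence $M'$ is path connected. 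The main obstacle is exactly the codimension-$\geq 2$ bound: ruling out codimension-$1$ strata requires the absence of reflection isotropy, i.e.\ that the action preserve an orientation (here supplied by \eqref{volume-form}), and this same bound is what the connectedness of $M'$ ultimately depends on.
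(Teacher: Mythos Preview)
The paper does not supply its own proof of this lemma; it is simply quoted from \cite{GS84}, so there is nothing in the paper to compare your argument against.  Your proof is the standard one via the slice theorem and is correct for the situation in which the lemma is actually used.

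One point deserves comment.  You rightly observe that the codimension-$\geq 2$ bound is the crux, and that ruling out codimension-$1$ strata needs the isotropy representation to land in $SL(T_xM)$; you secure this by invoking the orientation coming from the volume form \eqref{volume-form}.  That is exactly what is needed, but note that this hypothesis is \emph{not} part of the lemma as the paper states it.  In the source \cite{GS84} the ambient manifolds are symplectic, hence oriented, so the hypothesis is implicit there; the paper has simply dropped it when transcribing the statement.  Without some orientation hypothesis the codimension bound is false: for the standard rotation action of $S^1$ on $\mathbb{RP}^2$ the image of the equator is a circle of points with isotropy $\mathbb{Z}/2\mathbb{Z}$, a codimension-$1$ stratum.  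So your instinct to import the Calabi--Yau orientation is not a cheat but a genuine repair of the statement, and your proof is sound for every application the paper makes of the lemma.

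A minor refinement: the fixed set $M^K$ of a compact group acting isometrically is a disjoint union of closed totally geodesic submanifolds whose components may have different dimensions, so strictly speaking one should phrase the decomposition as a locally finite union of the \emph{components} of the $M^K$; this does not affect the rest of your argument.
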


\begin{theorem}\label{push-forward} Suppose that a $k$ dimensional torus $T$ acts on
a $2n$ dimensional connected generalized Calabi-Yau manifold $M$
freely such that the action is Hamiltonian with a proper generalized
moment map $\mu:M\rightarrow \t^*$ and a moment one form $\alpha \in
\Omega^1(M,\t^*)$, and such that the action preserves the
generalized Calabi-Yau structure $\rho$ on $M$.  Then the density
function of the push-forward measure $\mu_*(dm)$ coincides with the
Duistermaat-Heckman function $f$ defined by (\ref{DHfunction}),
where $dm$ is the measure on $M$ defined by the volume form
(\ref{volume-form}) on $M$.
\end{theorem}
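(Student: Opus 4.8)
The plan is to prove the equality of measures $\mu_*(dm)=f\,da$ directly, by fibre integration over the generalized moment map, reducing the global statement to a pointwise identity for the Mukai pairing on each reduced space. Concretely, for every $\phi\in C^\infty_c(\t^*)$ I will show that $\int_M(\phi\circ\mu)\,dm=\int_{\t^*}\phi(a)f(a)\,da$, which is exactly the assertion that $f$ is the density of the push-forward measure. As a preliminary I would check that $\mu$ is a proper submersion, so that the reduced spaces $M_a$ and the fibration picture are available for every $a$ in the open connected image $\mu(M)$. Properness is assumed; for the submersion property, if $d\mu^\xi_m=0$ for some $\xi\neq 0$, then by Definition \ref{deftmm}(a) the section $\xi_M+\alpha^\xi$ lies in $L_m$, but it is real, so it lies in $L_m\cap\overline{L}_m=\{0\}$, forcing $\xi_M(m)=0$; since the action is free this is impossible. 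Hence $\text{Crit}(\mu)=\emptyset$, and by Ehresmann's theorem $\mu\colon M\to\mu(M)$ is a locally trivial fibre bundle whose fibre $\mu^{-1}(a)$ is a compact $T$-bundle $\pi\colon\mu^{-1}(a)\to M_a$ over the compact reduced space $M_a$.

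Next I would set up the local product structure near a level set and prove the key decomposition lemma. Fix a connection $\theta\in\Omega^1(M,\t)$ with connection one-forms $\theta^1,\dots,\theta^k$ satisfying $\iota_{\xi_{i,M}}\theta^j=\delta^j_i$, and linear coordinates $a_1,\dots,a_k$ on $\t^*$, so that the functions $\mu_j=\mu^{\xi_j}$ have everywhere independent differentials $d\mu_1,\dots,d\mu_k$ and $TM$ splits as the sum of the orbit directions $\langle\xi_{j,M}\rangle$, the moment directions, and the horizontal lift of $TM_a$. Since $dm=\frac{(-1)^n}{(2i)^n}(\rho,\overline{\rho})$ is intrinsic and the Mukai volume is a $B$-transform invariant (Proposition \ref{invariance-mukai-pairing}), I may compute in the gauge constructed in the reduction result proved above, in which near $\mu^{-1}(a)$ the moment one form vanishes, $H$ is basic, and $\rho|_{\mu^{-1}(a)}=\pi^*\widetilde{\rho}_a$; there the relation of Definition \ref{deftmm}(a) reads $\iota_{\xi_{j,M}}\rho=\sqrt{-1}\,d\mu_j\wedge\rho$, with the conjugate $\iota_{\xi_{j,M}}\overline{\rho}=-\sqrt{-1}\,d\mu_j\wedge\overline{\rho}$. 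The lemma I am after is the local factorization
\[ (\rho,\overline{\rho})=(-1)^{k(k+1)/2}(2i)^k\,\pi^*(\widetilde{\rho}_a,\overline{\widetilde{\rho}}_a)\wedge\theta^1\wedge\cdots\wedge\theta^k\wedge d\mu_1\wedge\cdots\wedge d\mu_k, \]
equivalently a contraction identity obtained by feeding $(\rho,\overline{\rho})$ the $k$ moment lifts and the $k$ orbit fields $\xi_{j,M}$.

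Granting the lemma, the integration is routine. Substituting the factorization into $\int_M(\phi\circ\mu)\,dm$ and applying Fubini along the bundle $M\to\mu(M)$ and the $T$-bundle $\mu^{-1}(a)\to M_a$, the factor $\theta^1\wedge\cdots\wedge\theta^k$ integrates over each orbit to $\vol(T)=(2\pi)^k$, the factor $d\mu_1\wedge\cdots\wedge d\mu_k=\mu^*(da)$ becomes Lebesgue measure on $\t^*$, and $\pi^*(\widetilde{\rho}_a,\overline{\widetilde{\rho}}_a)$ integrates over $M_a$ to $\int_{M_a}(\widetilde{\rho}_a,\overline{\widetilde{\rho}}_a)$. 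Collecting the prefactor $\frac{(-1)^n}{(2i)^n}$ of $dm$ with the constants $(-1)^{k(k+1)/2}(2i)^k(2\pi)^k$ produced by the lemma yields exactly $\frac{(-1)^{n+k(k+1)/2}(2\pi)^k}{(2i)^{n-k}}\int_{M_a}(\widetilde{\rho}_a,\overline{\widetilde{\rho}}_a)=f(a)$, so $\int_M(\phi\circ\mu)\,dm=\int_{\t^*}\phi f\,da$, which is the claim.

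The main obstacle is the decomposition lemma of the second step. Unlike the top power of a symplectic form, the Mukai volume $(\rho,\overline{\rho})=[\sigma(\rho)\wedge\overline{\rho}]_{\mathrm{top}}$ involves the reversal $\sigma$ and a top-degree projection, so peeling off the $2k$ transverse directions is delicate. I expect to carry it out using the Clifford-module behaviour of the Mukai pairing from \cite{Gua07}, namely the adjunction relating $(v\cdot\phi,\psi)$ and $(\phi,v\cdot\psi)$ for $v\in T_\C M\oplus T^*_\C M$, together with the substitutions $\iota_{\xi_{j,M}}\rho=\sqrt{-1}\,d\mu_j\wedge\rho$ and its conjugate and the identities $\iota_{\xi_{j,M}}d\mu_i=\xi_{j,M}(\mu_i)=0$, $\iota_{\xi_{j,M}}\theta^i=\delta^i_j$. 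The care lies entirely in bookkeeping: tracking the accumulated Koszul sign $(-1)^{k(k+1)/2}$ and the factor $(2i)^k$ (the $i^k$ coming from the $\sqrt{-1}$ in the moment relation and the $2^k$ from the normalization of the natural pairing), reconciling the orientation of $M$ induced by $(\rho,\overline{\rho})$ with that of $M_a$ induced by $(\widetilde{\rho}_a,\overline{\widetilde{\rho}}_a)$, and verifying via Proposition \ref{invariance-mukai-pairing} that the outcome is independent of the connection and the gauge, so that $f(a)$ is well defined.
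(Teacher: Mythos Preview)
Your strategy coincides with the paper's: both arguments reduce the statement to a pointwise factorization of the Mukai volume $(\rho,\overline{\rho})$ as a product of the reduced Mukai volume $(\widetilde{\rho}_a,\overline{\widetilde{\rho}}_a)$ with the transverse factor $\theta^1\wedge\cdots\wedge\theta^k\wedge d\mu_1\wedge\cdots\wedge d\mu_k$, then integrate via Fubini. Your submersion argument and your handling of the constants $(-1)^{k(k+1)/2}(2i)^k(2\pi)^k$ match the paper's (the paper writes its factor as $(-2i)^k$ with the $\theta_j\wedge dx_j$ interleaved, which reshuffles to your sign).

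The one substantive divergence is in how the decomposition lemma itself is obtained. The paper does not use the Clifford adjunction for the Mukai pairing; instead it proceeds by induction on $k$. For $k=1$ it writes, in the local coordinates $(\theta_1,x_1,y_1,\dots,y_{2d})$ and with $\alpha=0$, the spinor $\rho$ explicitly as $dx_1\wedge\alpha_0+i\,\theta_1\wedge dx_1\wedge\alpha_1+\alpha_1$ with $\alpha_i$ horizontal in both $\partial_{\theta_1}$ and $\partial_{x_1}$ (this follows by solving $\iota_{\partial_{\theta_1}}\rho=\sqrt{-1}\,dx_1\wedge\rho$ degree by degree), identifies $\alpha_1|_{x_1=a_1}$ with $\pi_1^*\widetilde{\rho}_{a_1}$, and then checks by direct top-degree computation that $(\rho,\overline{\rho})_M=-2i\,\theta_1\wedge dx_1\wedge(\widetilde{\alpha}_1,\overline{\widetilde{\alpha}}_1)_{M_{a_1}}$. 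The inductive step applies the same argument to the residual $T^{k-1}$-action on $M_{a_1}$. This sidesteps precisely the bookkeeping you flag as the main obstacle: no adjunction identities for $\sigma$ and no tracking of Koszul signs through the Clifford action are needed, because in the explicit form of $\rho$ only the term $i\,\theta_1\wedge dx_1\wedge\alpha_1$ pairs nontrivially with its conjugate in top degree. Your Clifford-adjunction route is viable in principle and would be more conceptual, but since you have not carried it out, the paper's inductive computation gives you a concrete and short way to close that step.
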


\begin{proof}  Since the torus action is free, after applying a $B$-transform,
we may well assume that the moment one form $\alpha$ is trivial.
Choose an integer lattice in $\frak{t}=\text{Lie}(T)$, and identify
$T$ with $S^1\times \underbrace{ S^1\cdots \times S^1}_{k-1}=S^1
\times T^{k-1}$. Following the proof of \cite[Thm. 5.8]{GGK97}, we
consider an open neighborhood of a free orbit in $M$. On such a
neighborhood there exists a coordinate system
\[\theta_1,
\theta_2,\cdots,\theta_k,x_1,\cdots,x_k,y_1,\cdots,y_{2d},\] where
$2d=2n-2k, \theta_i \in \R/{2\pi \Z}$, and $x_i$ are coordinates on
$\frak{t}^*$, such that the $T$ action is generated by the vector
field
$\frac{\partial}{\partial\theta_1},\frac{\partial}{\partial\theta_2},
\cdots,\frac{\partial}{\partial\theta_k}$ and the moment map $\mu$
is given by $(x_1,x_2,\cdots,x_k)$. We refer to the proof of
\cite[Thm. 5.8]{ GGK97} for a detailed explanation on the existence
of such a coordinate system. Then we claim that for any
$a=(a_1,a_2,\cdots,a_k)$ in an open subset of $\frak{t}^*$ such that
$x_1,x_2,\cdots,x_k$ are well defined, we have
\begin{equation}\label{claim} \left(\rho(x), \overline{\rho}(x)\right)_M
=(-2i)^k(\theta_1\wedge dx_1 \wedge \theta_2\wedge dx_2 \wedge\cdots
\wedge\theta_k\wedge dx_k)\wedge\left(\widetilde{\rho}_a(\pi(x)),
\overline{\widetilde{\rho}}_a(\pi(x))\right)_{M_a},\end{equation}
where $\pi:\mu^{-1}(a)\rightarrow M_a$ is the quotient map, $x$ is a
point in the level set $\mu^{-1}(a)$, and $(\cdot,\cdot)_M$ and
$(\cdot,\cdot)_{M_a}$ denote the Mukai pairing on $M$ and $M_a$
respectively. We will establish our claim by induction on $k$. If
$k=1$, then since the action is Hamiltonian,
$(\frac{\partial}{\partial \theta_1}-\sqrt{-1}dx_1)\cdot \rho=0$,
i.e., $\iota_{\frac{\partial}{\partial \theta_1}}
\rho=\sqrt{-1}dx_1\wedge \rho.$  A simple calculation shows that
under the above coordinate system $\rho$ must be of the form
\[dx_1\wedge \alpha_0+i\theta_1\wedge dx_1\wedge \alpha_1 +
\alpha_1,\] where $\alpha_i$ is a differential form such that
$\iota_{\frac{\partial}{\partial \theta_1}}\alpha_i=0$ and
$\iota_{\frac{\partial}{\partial x_1}}\alpha_i=0$, $i=0,1$. It is
easy to check directly that the restriction of $\alpha_1$ to the
level set $x_1^{-1}(a)$ descends to a quotient Calabi-Yau structure
$\widetilde{\alpha}_1$ on the quotient $x_1^{-1}(a_1)/S^1$;
moreover, we have
\begin{equation}\label{step1} \left(\rho(x),
\overline{\rho}(x)\right)_M=-2i(\theta_1\wedge
dx_1)\wedge\left(\widetilde{\alpha}_1(\pi_1(x)),
\overline{\widetilde{\alpha}}_1(\pi_1(x))\right)_{M_{a_1}},\end{equation}
where $\pi_1: x_1^{-1}(a_1)\rightarrow M_{a_1}=x_1^{-1}(a_1)/S^1$ is
the quotient map.  If $k=1$, then $M_{a_1}=M_a$ and
$\widetilde{\alpha}_1$ is actually the quotient generalized
Calabi-Yau structure $\widetilde{\rho}_a$ on it. The claim is proved
for the case $k=1$. Assume that the claim is true for $k-1$ and
consider the case $k$. Note that the action of $T^{k-1} \subset
T=S^1\times T^{k-1}$ on $M$ commutes with that of $S^1$ and so
descends to a Hamiltonian action on $M_{a_1}$ with moment map
$(x_2,\cdots,x_{k})$. Here by abuse of notation, we have used
the same $x_i$ to denote the functions on the quotient $M_{a_1}$
whose pull-back under the quotient map $\pi_1$ coincides with
$x_i\mid_{x_1^{-1}(a_1)}$, $2\leq i \leq k$.

 Observe that $M_a=\left(x_2^{-1}(a_2)\cap
\cdots x_k^{-1}(a_k)\right)/T^{k-1}$ and the restriction of
$\widetilde{\alpha}_1$  to $x_2^{-1}(a_2)\cap \cdots x_k^{-1}(a_k)$
descends to the quotient generalized Calabi-Yau structure
$\widetilde{\rho}_a$ on $M_a$. Using the induction assumption, we
get that \begin{equation} \label{step2}
\left(\widetilde{\alpha}_1(y),
\overline{\widetilde{\alpha}}_1(y)\right)_{M_{a_1}}=(-2i)^{k-1}(\theta_2\wedge
dx_2 \wedge\cdots \wedge\theta_k\wedge
dx_k)\wedge\left(\widetilde{\rho}_a(\pi_2(y)),\overline{\widetilde{\rho}}_a(\pi_2(y))\right)_{M_a},\end{equation}
where $\pi_2: x_2^{-1}(a_2)\cap \cdots x_k^{-1}(a_k) \rightarrow
M_a=\left(x_2^{-1}(a_2)\cap \cdots x_k^{-1}(a_k)\right)/T^{k-1}$ is
the quotient map, $y$ is a point in the level set $x_2^{-1}(a_2)\cap
\cdots x_k^{-1}(a_k)$. Combining (\ref{step1}) and (\ref{step2}), we
conclude our claim holds for the case $k$.

To finish the proof, we borrow the following argument from
\cite{GGK97}. Let $\lambda_j$ be an invariant partition of unit such
that each $\lambda_j$ is supported in a neighborhood with
coordinates described above.  Then we have
\[\begin{split} \mu_{\ast}(dm)&
=\dfrac{(-1)^n}{(2i)^n}\sum_j \int_M  \lambda_j\left(\rho(x),
\overline{\rho}(x)\right)_M
\\&=\dfrac{(-1)^{n-k}}{(2i)^{n-k}}\sum_j \int  \lambda_j (\theta_1\wedge dx_1
 \wedge\cdots \wedge\theta_k\wedge
dx_k)\wedge\left(\widetilde{\rho}_a(\pi(x)),\overline{\widetilde{\rho}}_a(\pi(x))\right)_{M_a}
\\&= \dfrac{(-1)^{n+\frac{k(k+1)}{2}}(2\pi)^k}{(2i)^{n-k}}\sum_j \int \lambda_j\left( \int_{M_a}
\left(\widetilde{\rho}_a(\pi(x)),\overline{\widetilde{\rho}}_a(\pi(x))\right)_{M_a}\right)d_1dx_2\cdots
dx_k \,\,\,\\&(\text{ Because of Fubini's theorem})
\\&= \dfrac{(-1)^{n+\frac{k(k+1)}{2}}(2\pi)^k}{(2i)^{n-k}}\int \sum_j \lambda_j \left( \int_{M_a}
\left(\widetilde{\rho}_a(\pi(x)),\overline{\widetilde{\rho}}_a(\pi(x))\right)_{M_a}\right)dx_1dx_2\cdots
dx_k
\\&= \int f(a) dx_1dx_2\cdots
dx_k\,\,\,.
\end{split}\]

This proves Theorem \ref{push-forward}.
\end{proof}

Before we state the Duistermaat-Heckman theorem in the generalized
Calabi-Yau setting, we first give two simple observations.

\begin{lemma}\label{anti-automorphism2} Let $\sigma$ be the
anti-automorphism defined on $C^{\infty}(\wedge T^*M)$ as in
(\ref{anti-auto1}), and let $H$ be a closed thee form on the
manifold $M$. We have

\begin{itemize} \item [a)] if a differential form $\varphi$ is $d_H=(d-H\wedge)$-closed,
then $\sigma(\varphi)$ is $d_{-H}=(d+H\wedge)$-closed;
\item [b)] if $X+\gamma \in C^{\infty}(TM\oplus T^*M)$ and if $(X+\gamma)\cdot \varphi=\iota_X \gamma +\gamma \wedge
\varphi=0$, then we have $(X-\gamma)\cdot\sigma(\varphi)=\iota_X
\sigma(\varphi)-\gamma \wedge \sigma(\varphi)=0$. \end{itemize}

\end{lemma}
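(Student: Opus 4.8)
The plan is to reduce both parts to the single observation that $\sigma$ acts on the homogeneous piece $\Omega^q(M)$ as multiplication by the scalar $\epsilon(q) := (-1)^{q(q-1)/2}$. Indeed, reversing a decomposable $q$-form $v_1\wedge\cdots\wedge v_q$ to $v_q\wedge\cdots\wedge v_1$ requires $q(q-1)/2$ transpositions, so $\sigma|_{\Omega^q}=\epsilon(q)\,\id$. The entire lemma then becomes bookkeeping of how this scalar interacts with the degree-shifting operators $d$, $H\wedge(\cdot)$, $\iota_X$, and $\gamma\wedge(\cdot)$, and the crux of both parts will be the single elementary identity
\[ \epsilon(q+2) = -\epsilon(q), \]
which holds because $\binom{q+2}{2}-\binom{q}{2}=2q+1$ is odd.

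For part (a), I would first decompose $\varphi=\sum_q\varphi_q$ into homogeneous components. Since $H$ is a three-form, the vanishing of $d_H\varphi=d\varphi-H\wedge\varphi$ reads, in each total degree $k$, as $d\varphi_{k-1}=H\wedge\varphi_{k-3}$. Using $\sigma|_{\Omega^q}=\epsilon(q)\,\id$, the degree-$k$ component of $d_{-H}(\sigma\varphi)=d(\sigma\varphi)+H\wedge(\sigma\varphi)$ is $\epsilon(k-1)\,d\varphi_{k-1}+\epsilon(k-3)\,H\wedge\varphi_{k-3}$. Substituting the closedness relation turns this into $\bigl(\epsilon(k-1)+\epsilon(k-3)\bigr)H\wedge\varphi_{k-3}$, which vanishes by the master identity since $k-1=(k-3)+2$. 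Hence $d_{-H}(\sigma\varphi)=0$.

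For part (b), I would proceed identically, reading the annihilation condition $(X+\gamma)\cdot\varphi=0$ in its correct form $\iota_X\varphi+\gamma\wedge\varphi=0$. After decomposing $\varphi=\sum_q\varphi_q$, and since $\iota_X$ lowers degree by one while $\gamma\wedge(\cdot)$ raises it by one, the hypothesis reads $\iota_X\varphi_{k+1}+\gamma\wedge\varphi_{k-1}=0$ in each degree $k$. Then the degree-$k$ component of $\iota_X(\sigma\varphi)-\gamma\wedge(\sigma\varphi)$ equals $\epsilon(k+1)\,\iota_X\varphi_{k+1}-\epsilon(k-1)\,\gamma\wedge\varphi_{k-1}$, and substituting $\iota_X\varphi_{k+1}=-\gamma\wedge\varphi_{k-1}$ collapses it to $-\bigl(\epsilon(k+1)+\epsilon(k-1)\bigr)\gamma\wedge\varphi_{k-1}$, which is again zero because $k+1=(k-1)+2$. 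Thus $(X-\gamma)\cdot\sigma(\varphi)=0$.

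There is no genuine analytic or geometric obstacle here; the content is purely combinatorial sign-bookkeeping, and the only thing one must record consistently is that $\sigma$ is scalar multiplication by $\epsilon(q)$ on $\Omega^q$ together with the fact that $d$ and $H\wedge(\cdot)$ (respectively $\iota_X$ and $\gamma\wedge(\cdot)$) pick up opposite relative signs under $\sigma$ — this is exactly what converts $d_H$ into $d_{-H}$ and $X+\gamma$ into $X-\gamma$. I would also remark that part (b) has a cleaner conceptual reading: $\sigma$ is the reversal anti-automorphism of the Clifford algebra acting on the spinor module $\wedge V^*$, and the sign flip $\gamma\mapsto-\gamma$ is precisely the effect of this anti-automorphism on the Clifford multiplication by a degree-one element $X+\gamma$.
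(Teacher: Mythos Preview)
Your proposal is correct and follows essentially the same route as the paper: decompose $\varphi$ into homogeneous components, use that $\sigma$ acts as the scalar $(-1)^{q(q-1)/2}$ on $\Omega^q$, and check the signs. The paper merely asserts that ``it is straightforward to check'' the relevant sign flip, whereas you make the governing identity $\epsilon(q+2)=-\epsilon(q)$ explicit and apply it uniformly to both parts; your closing remark on the Clifford anti-automorphism interpretation of part (b) is a nice bonus not present in the paper.
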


\begin{proof} Decompose $\varphi=\sum_{i=0}^{p}\varphi_i$  into
homogeneous components by the usual grading of differential forms,
where $p=\text{dim} M$. Then  it follows from $d_H\varphi=0$ that
$d\varphi_i=0$ for $i\leq 1$ and $d\varphi_i =H\wedge \varphi_{i-2}$
for $i \geq 2$. It is straightforward to check that
$d\sigma(\varphi_i)=0$ if $ i\leq 1$ and
$d\sigma(\varphi_i)=-H\wedge \sigma(\varphi_{i-2})$ if $ i \geq 2$.
So we have that $\sigma(\varphi)$ is $d_{-H}=(d+H\wedge)$-closed.
This proves Assertion (a) in Lemma \ref{anti-automorphism2}. A
similar argument proves Assertion (b).
\end{proof}

\begin{lemma} \label{eq-extension}Assume that there is a Hamiltonian action of a compact connected Lie group
$G$ on an $H$-twisted generalized Calabi-Yau manifold $M$ with
proper generalized moment map $\mu:M\rightarrow \frak{t}^*$ and
moment one form $\alpha \in \Omega^1(M, \frak{g}^*)$.  Then
$e^{i\mu}\rho$ is an $H_G=(H+\alpha)$-twisted equivariantly closed
form with coefficients in the ring of formal power series.
\end{lemma}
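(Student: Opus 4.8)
The plan is to recognize that the factor $e^{\newi\mu}$ is exactly the conjugation that turns the defining pure-spinor equation of the Hamiltonian action into equivariant closedness. Recall from the discussion preceding Proposition~\ref{equivalence } that, on the completed Cartan complex $\widehat{\Omega}_G$, the operator $D_G=d_H+\mathcal{A}$ of \cite{Lin06} is nothing but $d_G$ twisted by $H_G-\newi d_G\mu$. Thus, applying Lemma~\ref{basic-facts-twisted-equivariant-cohomology} to the even equivariant form $\lambda=-\newi\mu$ (legitimate since $\mu\in(S^1\g^*\otimes\Omega^0(M))^G$ has total degree $2$), multiplication by $e^{-\newi\mu}$ is a chain isomorphism from $(\widehat{\Omega}_G,d_{G,H_G})$ onto $(\widehat{\Omega}_G,D_G)$, with inverse multiplication by $e^{\newi\mu}$. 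Hence $e^{\newi\mu}\rho$ is $d_{G,H_G}$-closed if and only if $\rho$ is $D_G$-closed, and it is the latter, cleaner statement that I would establish. Since $e^{\newi\mu}=\sum_{n\ge 0}(\newi\mu)^n/n!$ is an infinite series, it lives only in the $a$-adic completion $\widehat{\Omega}_G$ with coefficients in $\widehat{S\g^*}$; this is precisely why the formal power series ring of the Appendix is indispensable and why the conclusion is phrased there.

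Next I would compute $D_G\rho=d_H\rho+\mathcal{A}\rho$, regarding $\rho\in\Omega(M)$ as a constant (degree-zero in $S\g^*$) equivariant form. The term $d_H\rho$ vanishes because $\rho$, being a generalized Calabi-Yau structure, is $d_H$-closed by definition. For the term $\mathcal{A}\rho$, observe that $(\mathcal{A}\rho)(\xi)=-\iota_{\xi_M}\rho+\newi(d\mu^\xi+\newi\alpha^\xi)\wedge\rho$. The defining condition~(a) of Definition~\ref{deftmm} states that $\xi_M-\newi(d\mu^\xi+\newi\alpha^\xi)\in C^\infty(L)$, while $\rho$ is the pure spinor annihilated by the Clifford action of every section of $L$. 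Writing that Clifford action out gives $\big(\xi_M-\newi(d\mu^\xi+\newi\alpha^\xi)\big)\cdot\rho=\iota_{\xi_M}\rho-\newi(d\mu^\xi+\newi\alpha^\xi)\wedge\rho=0$, which is exactly $-(\mathcal{A}\rho)(\xi)$. Therefore $\mathcal{A}\rho=0$, so $D_G\rho=0$ and $e^{\newi\mu}\rho$ is $d_{G,H_G}$-closed.

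One point that must be settled for the statement to make sense is that $e^{\newi\mu}\rho$ be a genuine element of $\widehat{\Omega}_G=(\widehat{S\g^*}\otimes\Omega(M))^G$, i.e.\ $G$-invariant; as $\mu$ is equivariant this reduces to $G$-invariance of $\rho$. I would obtain this from the identity $L_{\xi_M}=d_H(\iota_{\xi_M}+\alpha^\xi\wedge)+(\iota_{\xi_M}+\alpha^\xi\wedge)d_H$ recorded earlier in this section (which uses $\iota_{\xi_M}H=d\alpha^\xi$): since the relation $\mathcal{A}\rho=0$ rewrites as $(\iota_{\xi_M}+\alpha^\xi\wedge)\rho=\newi\,d\mu^\xi\wedge\rho$, applying the identity to $\rho$ and using $d_H\rho=0$ yields $L_{\xi_M}\rho=\newi\,d_H(d\mu^\xi\wedge\rho)$, which collapses to $0$ after inserting $d\rho=H\wedge\rho$; connectedness of $G$ then gives invariance. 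The only genuinely delicate aspect is the bookkeeping in the completion---checking that $e^{\pm\newi\mu}$ act as mutually inverse chain maps on $\widehat{\Omega}_G$ and that $d_{G,H_G}$ may be applied term by term to the series; everything else is routine. For a self-contained alternative one can avoid the conjugation isomorphism and expand directly: using $d_G\mu=d\mu$ and the evenness of $e^{\newi\mu}$, the value of $d_{G,H_G}(e^{\newi\mu}\rho)$ at $\xi\in\g$ equals $e^{\newi\mu^\xi}$ times $\newi\,d\mu^\xi\wedge\rho+(d\rho-\iota_{\xi_M}\rho)-(H+\alpha^\xi)\wedge\rho$, and since $d\rho=H\wedge\rho$ this bracket is $-\big(\iota_{\xi_M}\rho-\newi(d\mu^\xi+\newi\alpha^\xi)\wedge\rho\big)=0$ by the same pure-spinor identity; I regard the two routes as interchangeable.
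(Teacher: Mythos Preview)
Your proposal is correct, and your closing ``self-contained alternative'' is precisely the paper's proof: it evaluates $d_{G,H_G}(e^{\newi\mu}\rho)$ at $\xi$, factors out $e^{\newi\mu^\xi}$, and recognizes the remaining expression as the Clifford action of $-\xi_M+\newi(d\mu^\xi+\newi\alpha^\xi)\in C^\infty(L)$ on $\rho$, which vanishes since $\rho$ is a pure spinor for $L$. Your main route through the conjugation isomorphism of Proposition~\ref{equivalence } is a legitimate but mild repackaging of the same identity: rather than computing $d_{G,H_G}(e^{\newi\mu}\rho)$ directly, you use that $e^{-\newi\mu}$ intertwines $d_{G,H_G}$ with $D_G$ and then verify $D_G\rho=d_H\rho+\mathcal{A}\rho=0$; the pure-spinor equation enters at exactly the same place, namely $\mathcal{A}\rho=0$. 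What your write-up adds beyond the paper is the verification that $\rho$ is $G$-invariant, which is needed for $e^{\newi\mu}\rho$ to lie in $\widehat{\Omega}_G$ at all; the paper's proof passes over this in silence (and the later Theorems~\ref{push-forward} and~\ref{Duistermaat-Heckman} impose it as a separate hypothesis), so your derivation of it from the Hamiltonian data is a genuine, if small, improvement.
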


\begin{proof} For any $\xi \in \t$, we have
\[ \begin{split} \left(d_{G,H_G}
e^{i\mu}\rho\right)(\xi)&=(\sqrt{-1}du^{\xi}\wedge
\rho-\iota_{\xi_M}\rho-\alpha^{\xi}\wedge
\rho)e^{i\mu^{\xi}}+e^{i\mu}(d_H\rho)\\&=
\left(-\xi_M+\sqrt{-1}(d\mu^{\xi}+\sqrt{-1}\alpha^{\xi})\right)\cdot
\rho
\\&=0.\end{split}\]
The last equality holds because $-\xi_M
+\sqrt{-1}(d\mu^{\xi}+\sqrt{-1}\alpha^{\xi})$ is a section of the
$\sqrt{-1}$-eigenbundle of the generalized Calabi-Yau structure so
that its Clifford action annihilates $\rho$.

\end{proof}

\begin{remark}\label{why-enlarge-coefficient-ring} The same observation that $e^{i\mu}\rho$ is an equivariant closed extension of
$\rho$ has been made in \cite{HuUribe06}, and in \cite{NY07} for the
case $H=0$ and $\alpha=0$. However, $e^{i\mu}\rho$ was treated as an
equivariant differential form in the usual Cartan model in
\cite{NY07}. This is incorrect since
$e^{i\mu}\rho: \frak{g}\rightarrow \Omega(M)$ is not a polynomial
map, where $\frak{g}$ is the Lie algebra of $G$. For instance, let
$G$ act on a symplectic manifold $(M, \omega)$ with moment map $\Phi
\colon M \to \g^*$, and let $\rho=e^{i\omega}$. Then
$e^{i\Phi}e^{i\omega}=e^{i(\omega+\Phi)}=e^{i\omega_G}$, where
$\omega_G:=\omega+\Phi$ is the equivariant symplectic form. It is
well known that $e^{i\omega_G}$ is not an equivariant differential
form in the Cartan model, c.f., \cite[pp. 167]{GS99}. Therefore,
strictly speaking, the proof of the Duistermaat-Heckman theorem in
the generalized Calabi-Yau setting given in \cite{NY07} has a gap.
\end{remark}

\begin{theorem}\label{Duistermaat-Heckman}
Assume that there is a free Hamiltonian action of a $k$ dimensional
torus $T$ on an $H$-twisted $2n$ dimensional generalized Calabi-Yau
manifold $M$ with proper generalized moment map $\mu:M\rightarrow
\frak{t}^*$ and moment one form $\alpha \in \Omega^1(M,\frak{t}^*)$,
preserving the generalized Calabi-Yau structure $\rho$. Then the
density function $f$ for the push-forward measure on $\frak{t}^*$
via the generalized moment map $\mu$ is a polynomial of degree at
most $n-k$.
\end{theorem}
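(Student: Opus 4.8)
The plan is to mimic the classical Duistermaat--Heckman argument: fix a reference regular value, identify all nearby reduced spaces, and track how the class of the reduced generalized Calabi--Yau structure varies. By Theorem \ref{push-forward} the density $f$ equals, up to the normalizing constant of (\ref{DHfunction}), the function $a\mapsto\int_{M_a}(\widetilde\rho_a,\overline{\widetilde\rho}_a)$, so it suffices to show this is a polynomial of degree at most $n-k$. First I would remove the moment one--form: since the action is free, a $B$--transform (Lemma \ref{Btransformoperator}) leaves $(\widetilde\rho_a,\overline{\widetilde\rho}_a)$ unchanged by the invariance of the Mukai pairing (Proposition \ref{invariance-mukai-pairing}) and lets me assume $\alpha=0$. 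Then $\iota_{\xi_M}H=d\alpha^\xi=0$, so $H$ is basic on the free locus $\mu^{-1}(U)$, descends to a fixed three--form $\widetilde H$ on $Q:=\mu^{-1}(U)/T$, and by Lemma \ref{eq-extension} the form $e^{i\mu}\rho$ is $d_{T,H}$--equivariantly closed in $\widehat\Omega_T$. Fix $a_0\in U$, choose a principal connection on $\mu^{-1}(U)\to Q$ with $\t$--valued curvature $c$, and use its horizontal flow to identify every fibre $M_a$ of $\bar\mu\colon Q\to U$ with the fixed compact manifold $M_{a_0}$ (compactness from properness of $\mu$).

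The crux is a generalized Calabi--Yau analogue of the Duistermaat--Heckman variation formula: under this identification,
\begin{equation}\label{dh-variation}
[\widetilde\rho_a]=e^{-i\langle a-a_0,\,c\rangle}\,[\widetilde\rho_{a_0}]\quad\text{in } H(M_{a_0},\widetilde H),
\end{equation}
where $\langle a-a_0,c\rangle$ is the closed two--form pairing $c$ against $a-a_0\in\t^*$. To prove (\ref{dh-variation}) I would push the class of $e^{i\mu}\rho$ through the twisted Cartan isomorphism $H_T(\mu^{-1}(U),H)\cong H(Q,\widetilde H)$ of Proposition \ref{twisted-version-extended-Cartan-theorem}: the Cartan map (\ref{cartan-map}) substitutes the curvature $c$ for the equivariant generators, turning $e^{i\mu}\rho$ into a class $\Theta=[\,e^{i\langle\bar\mu,\,c\rangle}\,\widetilde\rho\,]$ on $Q$ built from the descended moment map $\bar\mu$. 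Restricting to the fibre $\bar\mu^{-1}(a)$ sets $\bar\mu=a$, so $\iota_a^*\Theta=e^{i\langle a,c\rangle}[\widetilde\rho_a]$; since all the inclusions $\iota_a$ are homotopic under the chosen trivialization, $\iota_a^*\Theta$ is the same class for every $a$, and comparing the levels $a$ and $a_0$ yields (\ref{dh-variation}). Making this reduction--in--families precise, and in particular matching the level--wise quotients $\widetilde\rho_a$ with the global twisted Cartan map over $U$, is the main obstacle; note that on each compact fibre $c$ is nilpotent, so the exponentials are finite sums and no convergence issue arises. As a sanity check, for $\rho=e^{i\omega}$ the formula (\ref{dh-variation}) reduces to the usual linear variation $[\omega_a]=[\omega_{a_0}]+\langle a-a_0,c\rangle$ of the reduced symplectic class.

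With (\ref{dh-variation}) established I finish by a degree count. Both $\widetilde\rho_a$ and $\overline{\widetilde\rho}_a$ are $d_{\widetilde H}$--closed (as $\widetilde H$ is real), and by Lemma \ref{anti-automorphism2}(a) the anti--automorphism $\sigma$ sends $d_{\widetilde H}$--closed forms to $d_{-\widetilde H}$--closed ones; hence $([\varphi],[\psi])\mapsto\int_{M_{a_0}}(\varphi,\psi)=\int_{M_{a_0}}\sigma(\varphi)\wedge\psi$ is a well--defined pairing on $H(M_{a_0},\widetilde H)$, as replacing either slot by a $d_{\widetilde H}$--exact form alters the value by the integral of an exact form. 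Thus $\int_{M_a}(\widetilde\rho_a,\overline{\widetilde\rho}_a)$ depends only on the classes in (\ref{dh-variation}). Setting $b=-i\langle a-a_0,c\rangle$, so that $\overline{\widetilde\rho}_a$ is cohomologous to $e^{-b}\overline{\widetilde\rho}_{a_0}$ (because $\bar b=-b$), the relation $\sigma(e^{b})=e^{-b}$ gives, by the computation underlying Proposition \ref{invariance-mukai-pairing},
\begin{equation}\label{dh-rewrite}
\int_{M_a}(\widetilde\rho_a,\overline{\widetilde\rho}_a)=\int_{M_{a_0}}\left(e^{-2b}\wedge\sigma(\widetilde\rho_{a_0})\wedge\overline{\widetilde\rho}_{a_0}\right)_{\text{top}}.
\end{equation}

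Finally I would read off the degree from (\ref{dh-rewrite}). The form $\Xi:=\sigma(\widetilde\rho_{a_0})\wedge\overline{\widetilde\rho}_{a_0}$ is fixed, while $b$ is a closed two--form depending linearly on $a$; writing $e^{-2b}=\sum_m\tfrac{(-2)^m}{m!}b^m$ and extracting the top component on the $2(n-k)$--dimensional manifold $M_{a_0}$ leaves only the summands $\tfrac{(-2)^m}{m!}\,b^m\wedge\Xi_{[2(n-k)-2m]}$ with $0\le m\le n-k$. Integrating, each such term is a homogeneous polynomial of degree $m$ in $a-a_0$, so $a\mapsto\int_{M_a}(\widetilde\rho_a,\overline{\widetilde\rho}_a)$, and therefore $f$, is a polynomial of degree at most $n-k$, as claimed.
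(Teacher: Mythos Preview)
Your argument is correct and follows the same overall arc as the paper's proof: reduce to $\alpha=0$ by a $B$-transform, use that $e^{i\mu}\rho$ is equivariantly $d_{T,H}$-closed (Lemma~\ref{eq-extension}), push through the extended Cartan map, restrict to level sets, and finish with a degree count on the curvature two-forms. The organization, however, is genuinely different. The paper first pairs upstairs, forming $e^{-2i\mu}\sigma(\rho)\wedge\overline{\rho}$, which by Lemma~\ref{anti-automorphism2} and Lemma~\ref{module-structure} is \emph{untwisted} $d_G$-closed; it then applies the ordinary extended Cartan map (Theorem~\ref{extended-Cartan-theorem}) to obtain a fixed class $\varphi\in H(M/T,\C)$ and reads off $[\sigma(\widetilde\rho_a)\wedge\overline{\widetilde\rho}_a]=[e^{2ia^l\widetilde c_l}]\wedge j_a^*\varphi$ directly, pairing against the constant homology class $[M_{a_0}]\in H_{2n-2k}(M/T,\Z)$. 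You instead keep $\rho$ by itself, apply the \emph{twisted} Cartan isomorphism (Proposition~\ref{twisted-version-extended-Cartan-theorem}) to get the variation formula \eqref{dh-variation} for $[\widetilde\rho_a]$ in $H(M_{a_0},\widetilde H)$, and only then pair via the Mukai pairing, which you must first check descends to twisted cohomology. The paper's route is a little shorter because pairing first cancels the twist and sidesteps both the fibrewise identification of $\widetilde H_a$ with $\widetilde H_{a_0}$ and the well-definedness of the Mukai pairing on $H(\cdot,\widetilde H)$; your route costs those extra verifications but yields the class-level variation formula \eqref{dh-variation} as a byproduct, which is the direct generalized Calabi--Yau analogue of the classical Duistermaat--Heckman statement $[\omega_a]=[\omega_{a_0}]+\langle a-a_0,c\rangle$ and is of independent interest.
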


\begin{proof} As the action is free, after applying a
$B$-transform, one can assume that the moment one form $\alpha$ is
zero. Under this assumption,  $H_G=H$ is an equivariantly closed
three form in the usual Cartan model. First, by Lemma
\ref{eq-extension} we have $d_{G,H}(e^{i\mu}\rho)=0$. Since
$d_{G,H}$ is a real operator, we also have
$d_{G,H}(e^{-i\mu}\overline{\rho})=0$. Next we observe that the
canonical anti-automorphism $\sigma: \Omega^*(M)\rightarrow
\Omega^*(M)$ extends naturally to the equivariant differential
forms. Moreover, as an easy consequence of Lemma
\ref{anti-automorphism2}, we have that
\[ \begin{split} d_{G,-H}(e^{-i\mu}\sigma(\rho))&=e^{-i\mu}(d+H\wedge)
\sigma(\rho)+ e^{-i\mu}(-\sqrt{-1}d\mu^{\xi}-\xi_M)\cdot
\sigma(\rho)=0\end{split}\] Consequently by Lemma
\ref{module-structure} $e^{-2i\mu} \sigma(\rho)\wedge
\overline{\rho}$ is equivariantly closed under the usual (untwisted)
equivariant differential $d_G$.

Now choose a $T$-invariant $\t^*$ valued connection one form
$\theta$ for the principal bundle $M\rightarrow M/T$. Choose a basis
of $\t$ so as to identify $\t^*$ with $\R^k$ and suppose
$\theta=(\theta^1,\theta^2,\cdots,\theta^k)$,
$\mu=(\mu^1,\mu^2,\cdots,\mu^k)$ under this identification. Let $c_l
$ be the unique closed two form on $M/T$ such that $\pi^*
c_l=d\theta^l$, $1\leq l \leq k$. As we explained in Appendix
\ref{generalized-eq-diff-forms}, the usual Cartan map, as defined in
(\ref{cartan-map}), extends naturally to the equivariant
differential forms with coefficients in the ring of formal power
series. Moreover, it follows easily from Theorem
\ref{extended-Cartan-theorem} that $e^{-2i\mu} \sigma(\rho)\wedge
\overline{\rho}$ gets mapped to an ordinary closed differential form
\[e^{-2i \mu^lc_l}\sigma(\tau )\wedge \overline{\tau},\] where
$\tau$ is the unique differential form on $M/T$ such that $\pi^*\tau
=\rho_{\text{hor}}$, and $\pi: M\rightarrow M/T$ is the quotient
map.

Let $j_a : M_a \rightarrow M/T$ be the inclusion map and let $\pi_a:
\mu^{-1}(a) \rightarrow M_a$ be the natural projection map. From the
commutative diagram
\[ \begin{CD}
\mu^{-1}(a)  @>\text{ inclusion}>>
M \\
@V\pi_aVV             @V\pi VV\\
M_a @>j_a>> M/T
\end{CD}\]

we get that $j_a^*\tau =\widetilde{\rho}$. The closed three form $H$ descends to a closed three form $\widetilde{H}$ on $M/T$
such that $\pi^*\widetilde{H}=H$ and such that the quotient generalized Calabi-Yau structure $\rho_a$ is
$d_{\widetilde{H}}$-closed.  By Lemma \ref{anti-automorphism2} $\sigma(\rho_a)$ is
$d_{-\widetilde{H}}$-closed. Note that $d_{\widetilde{H}}$ is a real operator and
$\overline{\widetilde{\rho}}$ is $d_{\widetilde{H}}$-closed. Hence $\sigma({\rho}_a)\wedge \rho$ is
an ordinary closed differential form, representing a cohomology class in $H(M/T,\C)$.
Now let $\varphi$ be the
cohomology class of $[e^{-2i \mu^lc_l}\sigma(\tau )\wedge
\overline{\tau}]$ in $H(M/T,\C)$. Then we have

\[\begin{split} [\sigma(\widetilde{\rho}_a)\wedge
\overline{\widetilde{\rho}}_a]&=[j_a^*\left(\sigma(\tau)\wedge
\overline{\tau}\right)]\\&=[j_a^*\left(e^{2i\mu^lc_l}\wedge(e^{-2i\mu^lc_l}\wedge\sigma(\tau)\wedge\overline{\tau})\right)]\\
&=[e^{2ia^l\widetilde{c}_l}\wedge j_a^*(e^{-2i\mu^lc_l}\wedge\sigma(\tau)\wedge\overline{\tau})]\\&= [e^{2i a^l \widetilde{c}_l}]\wedge j_a^* \varphi,
\end{split}\]
where $\rho_a$ is the quotient generalized Calabi-Yau structure on
$M_a$, and $\widetilde{c}_l=j_a^*c_l$. Since $M_a$ is compact and
oriented, the embedding $j_a:M_a \rightarrow M/T$ defines an
integral homology class $[M_a] \in H_{2n-2k}(M/T,\Z)$. This homology
class depends smoothly on $a$ and is an integral class. So it is
actually independent of $a$. Let us fix an $a_0$ in the moment map
image $\mu(M)$. Then we have $[M_{a_0}]=[M_a]$. Thus up to a
normalization factor the integral (\ref{DHfunction}) can be
interpreted topologically as the pairing of the constant homology
class $[M_{a_0}]$ with the cohomology class
\begin{equation} \label{DH-function-expression} [e^{2i a^l \widetilde{c}_l}]\wedge j_a^*
\varphi.\end{equation}

Note that $\widetilde{c}_l$ represents a degree two cohomology
class, $1 \leq l \leq k$. Therefore the density function
(\ref{DHfunction}) is a polynomial function of degree at most $n-k$.
\end{proof}

\begin{remark} If the generalized Calabi-Yau structure $\rho$ is of
constant type $p$, then by the generalized Darboux theorem locally
it can always be written as $e^{B+i\omega}\theta_1\wedge\cdots
\wedge \theta_p$, where $B+i\omega$ is a closed complex valued two
form, and $\theta_1,\cdots, \theta_p$ are complex valued one forms.
(We refer to \cite[Thm. 4.35]{Gua07} for details.)  Combining this
with (\ref{DH-function-expression}), we conclude that the
Duistermaat-Heckman function is of degree at most $n-k-p$ provided
the generalized Calabi-Yau structure $\rho$ is of constant type $p$.

\end{remark}

\section{ Examples of Hamiltonian actions on generalized
Calabi-Yau manifolds} \label{eg-hamiltonian-g-calabi-yau}


The following theorem extends a useful construction of Hamiltonian
actions in symplectic geometry \cite[Prop. 4.2]{L04} to generalized
complex geometry, and it is an equivariant version of a construction
proposed in \cite[Thm. 2.2]{Ca05}. It allows us to construct
examples of Hamiltonian actions on compact twisted generalized
Calabi-Yau manifolds which are not a direct product of a symplectic
manifold and a complex manifold.
\begin{theorem}\label{Hamiltonian-G-Calabi-Yau} Let $(N,\J)$ be a compact $H$-twisted generalized
Calabi-Yau manifold. Then there exists a $S^2$ bundle $\pi:
M\rightarrow N$ which satisfies \begin{itemize} \item [a)] $M$
admits a $\pi^*H$-twisted generalized Calabi-Yau structure $\J'$;
\item[b)] there exists a Hamiltonian $S^1$ action on the generalized Calabi-Yau manifold
$(M,\J')$.
\end{itemize}
\end{theorem}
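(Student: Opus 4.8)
The plan is to realize $M$ as an $S^2$-bundle associated with a circle bundle over $N$, to equip it with a minimal-coupling closed two-form $\omega_M$ restricting to the area form on each fiber, and to take $\J'$ to be the generalized complex structure with pure spinor $e^{i\omega_M}\wedge\pi^*\rho$; the fiber-rotation circle action will then be Hamiltonian. First I would fix a Hermitian line bundle $\mathcal{L}\to N$ (with $c_1(\mathcal{L})\neq 0$ whenever possible, so as to produce examples that are not products) and let $\varpi\colon P\to N$ be its unit circle bundle, a principal $S^1$-bundle with a chosen connection one-form $\theta$ whose curvature is $\varpi^*F$, $F\in\Omega^2(N)$ closed. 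Let $(S^2,\omega_{S^2})$ carry its rotation action, generated by $X$, with invariant height function $h$ normalized so that $\iota_X\omega_{S^2}=dh$; by Example \ref{symplectic-g-calabi-Yau}, $e^{i\omega_{S^2}}$ is a generalized Calabi-Yau structure on $S^2$. Set $M=P\times_{S^1}S^2$, with $\pi\colon M\to N$ the bundle projection. On $P\times S^2$ the two-form $\Omega=\omega_{S^2}+d(h\theta)$ is closed, invariant and horizontal for the diagonal $S^1$-action, and restricts to $\omega_{S^2}$ on each fiber; hence it descends to a closed two-form $\omega_M\in\Omega^2(M)$, nondegenerate along the fibers, satisfying $\iota_{X_M}\omega_M=d\mu$, where $X_M$ is the vector field of the fiber-rotation action and $\mu\in C^\infty(M)$ is the descent of $h$.

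To prove (a), set $\rho_M=e^{i\omega_M}\wedge\pi^*\rho$. Closedness is the one-line computation $d_{\pi^*H}\rho_M=e^{i\omega_M}\wedge(d\,\pi^*\rho-\pi^*H\wedge\pi^*\rho)=e^{i\omega_M}\wedge\pi^*(d_H\rho)=0$, using $d\omega_M=0$ and $d_H\rho=0$. That $\rho_M$ is a nowhere vanishing pure spinor of the same type as $\rho$ is the content I would import from \cite[Thm. 2.2]{Ca05}; concretely, writing $\rho$ in its normal form $c\,e^{B+i\omega}\Omega_1\wedge\cdots\wedge\Omega_k$ yields $\rho_M=c\,e^{\pi^*B+i(\pi^*\omega+\omega_M)}\pi^*\Omega_1\wedge\cdots\wedge\pi^*\Omega_k$, again a pure spinor. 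For nondegeneracy I would use Proposition \ref{invariance-mukai-pairing} and the anti-automorphism $\sigma$: since $\sigma(e^{i\omega_M})=e^{-i\omega_M}$, the top-degree part of $\sigma(\rho_M)\wedge\overline{\rho}_M=\sigma(\pi^*\rho)\wedge\pi^*\overline{\rho}\wedge e^{-2i\omega_M}$ equals $-2i\,\pi^*(\rho,\overline{\rho})\wedge\omega_M$ (only the vertical part of $\omega_M$ contributing), which is a nonzero top form because $(\rho,\overline{\rho})$ is a volume form on $N$ and $\omega_M$ is an area form on the two-dimensional fibers. Thus $(\rho_M,\overline{\rho}_M)\neq 0$, so $\rho_M$ is a $\pi^*H$-twisted generalized Calabi-Yau structure.

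For (b), observe that $\omega_M$ and $\pi^*\rho$, and hence $\rho_M$, are invariant under the fiber-rotation action, so the action preserves $\J'$. I claim it is Hamiltonian with generalized moment map $\mu$ and moment one form $\alpha=0$. Since $X_M$ is vertical, $\iota_{X_M}\pi^*\rho=0$, while $\iota_{X_M}e^{i\omega_M}=(\sqrt{-1}\,d\mu)\wedge e^{i\omega_M}$ because $\iota_{X_M}(i\omega_M)=\sqrt{-1}\,d\mu$; therefore $\iota_{X_M}\rho_M=\sqrt{-1}\,d\mu\wedge\rho_M$ and the Clifford action $(X_M-\sqrt{-1}\,d\mu)\cdot\rho_M=\iota_{X_M}\rho_M-\sqrt{-1}\,d\mu\wedge\rho_M=0$. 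This is exactly condition (a) of Definition \ref{deftmm} with $\alpha=0$. Condition (b) is immediate: $\pi^*H+\alpha=\pi^*H$ is closed, invariant, and annihilated by $\iota_{X_M}$, hence equivariantly closed in the Cartan model.

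The hard part is the purity and nondegeneracy in (a): one must check that the mixed horizontal--vertical (connection) components of $\omega_M$ do not destroy maximal isotropy or the open condition $(\rho_M,\overline{\rho}_M)\neq 0$. The normal-form computation above is what controls this, the point being that pointwise the mixed terms can be absorbed by a $B$-field transform while $\omega_M$ still restricts to a genuine area form on each fiber; the remaining input is simply that $\rho$ is already a generalized Calabi-Yau structure on the base, so that $(\rho,\overline{\rho})$ is a volume form there.
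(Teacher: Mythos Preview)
Your construction is essentially the paper's: an associated $S^2$-bundle $M=P\times_{S^1}S^2$, a minimal-coupling closed two-form, the candidate spinor $e^{i\omega_M}\wedge\pi^*\rho$, and the fiber-rotation as the Hamiltonian action. The closedness and Hamiltonian computations in your (a) and (b) match the paper's. The point of divergence is the nondegeneracy $(\rho_M,\overline{\rho}_M)\neq 0$, and here your argument has a genuine gap.

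Your claim that the top-degree part of $\sigma(\pi^*\rho)\wedge\pi^*\overline{\rho}\wedge e^{-2i\omega_M}$ equals $-2i\,\pi^*(\rho,\overline{\rho})\wedge\omega_M$ is incorrect. Split $\omega_M=\eta_1+\eta_2$ with $\eta_1$ horizontal and $\eta_2$ fiberwise (there is no mixed term, since ``horizontal'' is defined as the $\omega_M$-orthogonal of the vertical). Because $\eta_2^2=0$, the top-degree part is
\[
-2i\,\eta_2\wedge\bigl[\sigma(\pi^*\rho)\wedge\pi^*\overline{\rho}\wedge e^{-2i\eta_1}\bigr]_{2n}
=-2i\,\eta_2\wedge\bigl(e^{i\eta_1}\pi^*\rho,\;e^{-i\eta_1}\pi^*\overline{\rho}\bigr)_{\text{Hor}},
\]
and the horizontal Mukai pairing on the right is \emph{not} $\pi^*(\rho,\overline{\rho})$; the horizontal piece $\eta_1$ enters nontrivially. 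Proposition~\ref{invariance-mukai-pairing} does not help, because it gives invariance under $e^B$ with the \emph{same} $B$ on both slots, whereas here the two slots carry $e^{i\eta_1}$ and $e^{-i\eta_1}$. Concretely, take $N$ symplectic with $\rho=e^{i\omega}$: then $(e^{i\eta_1}\pi^*\rho,e^{-i\eta_1}\pi^*\overline{\rho})_{\text{Hor}}$ is a constant times $(\pi^*\omega+\eta_1)^n$, and since $\eta_1$ is essentially $\mu\cdot\pi^*F$ with $F$ the curvature, this can vanish at points where $\mu$ is large enough.

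This is exactly why the paper does not use $e^{i\omega_M}$ but rather $e^{i\epsilon\eta}$ for a sufficiently small $\epsilon>0$: at $\epsilon=0$ the horizontal pairing is $(\rho,\overline{\rho})_N\neq 0$, and compactness of $N$ then guarantees it stays nonzero for small $\epsilon$. Your final paragraph gestures at ``absorbing mixed terms by a $B$-field transform'', but the obstruction is not a mixed term---it is a purely horizontal one---and a real $B$-transform preserves the Mukai pairing, so it cannot repair the problem. The fix is the $\epsilon$-scaling; once you insert it, the generalized moment map becomes $\epsilon\mu$ and everything else in your argument goes through unchanged.
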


\begin{proof} Let $S^2$ be the set of unit vectors in $\R^3$. In
cylindrical coordinates $(\theta,h)$ away from the poles, $0 \leq
\theta < 2\pi, -1 \leq h \leq 1$, the standard symplectic form on
$S^2$ is the area form $\sigma=d\theta\wedge dh$. The circle $S^1$
acts on $S^2$ by rotations \[ e^{it}(\theta,h)=(\theta+t,h).\] This
action is Hamiltonian with the moment map given by $\mu=h$, i.e.,
the height function.

Let $\pi_P:P\rightarrow N$ be the principal $S^1$-bundle with Euler
class $[c]\in H^2(N, \Z)$, let $\Theta$ be the connection $1$-form
such that $\pi^*_Pc=d\Theta$, and let $M$ be the associated bundle
$P\times_{S^1}S^2$. Then $\pi: M\rightarrow N$ is a symplectic
fibration over the compact $H$-twisted generalized complex manifold
$N$. The standard symplectic form $\sigma$ on $S^2$ gives rise to a
symplectic form $\sigma_x$ on each fibre $\pi^{-1}(x)\cong S^2$,
where $x \in N$, whereas the height function $h$ on $S^2$ gives rise to a
global function $F$ on $M$ whose restriction to each fiber is just $h$.

Next we resort to minimal coupling construction to get a closed two
form $\eta$ which restricts to $\sigma_x$ on each fiber
$\pi^{-1}(x)$. Let us give a sketch of this construction here and
refer to \cite{GS84} for technical details. Consider the closed tow
form $-d(t\Theta)=-td\Theta-dt\wedge \Theta$ defined on $P\times
\R$. It is easy to check that the $S^1$ action defined by
\[ e^{i\theta}(p,t)=(e^{i\theta} p,t)\] is Hamiltonian with moment map
$t$. Thus the diagonal action of $S^1$ on $(P\times\R)\times S^2$ is
also Hamiltonian, and $M$ is just the reduced space of
$(P\times\R)\times S^2$ at the zero level. As a result, the closed
two form $(-d(t\Theta)+\sigma)\mid_{\text{zero level}}$ descends to
a closed two form $\eta$ on $M$ which restricts to $\sigma_x$ on each
fibre $\pi^{-1}(x)\cong S^2$, and the function \[(P\times \R)\times S^2 \rightarrow
\R, (p,t,z) \mapsto h(z)\] descends to a  function $F$ on $M$ which
restricts to the height function $h$ on each fibre $S^2$. Moreover,
there is another $S^1$ action on $(P\times \R)\times S^2$ which is defined
by letting $S^1$ act on $(P\times \R)$ trivially and act on $S^2$ by rotation;
this action commutes with the above diagonal $S^1$ action and so descends to a
fibrewise $S^1$ action on $M$. Let $X$ be the fundamental vector
field generated by this fibrewise $S^1$ action. It is easy to check
that the function $F$ is invariant under the fibrewise $S^1$ action
and $\iota_X (\eta)= dF$.

Let $\rho$ be the closed nowhere vanishing pure spinor associated to
the generalized Calabi-Yau structure $\J$ on $N$, and let
$(\cdot,\cdot)_M$ be the Mukai pairing on $M$. We claim that for
sufficiently small $\epsilon>0$,
\[(e^{i\epsilon \eta}\wedge \pi^*\rho, e^{-i\epsilon
\eta}\wedge \pi^*\overline{\rho})_M \neq 0.\]

Since $\eta$ is non-degenerate on the vertical tangent space
$\text{ker}\pi_*$, at each point $x \in M $ it determines a
horizontal subspace
\[\text{Hor}_x=\{ X\in T_xM: \eta(X,Y)=0, \,\forall Y\in
T_xS^2\}.\] The subspace $\text{Hor}_x$ is a complement to $T_xS^2$
and is isomorphic to $T_{\pi(x)}N$ via $\pi_*$. So we have a
spitting
\[ T_xM=\text{Hor}_x\oplus T_x S^2 \] which further induces a
splitting of the space of differential forms of degree two
\[ \wedge^2T_x^*M=(\wedge^2\text{Hor}_x^*) \oplus (\wedge^2T_x^*S^2)\oplus
(T_x^*M\otimes \text{Hor}^*_x).\] Here $\text{Hor}^*_x$ denotes the
dual space of $\text{Hor}_x$. By the definition of the horizontal
subspaces, $\eta$ splits into a direct sum $\eta=\eta_1+\eta_2$ such
that $\eta_{1,x} \in \wedge^2\text{Hor}_x^*, \eta_{2,x}\in \wedge^2
T_x^*S^2$. Let $(\cdot,\cdot)_{\text{Hor}}$ be the Mukai pairing on
$\text{Hor}$ and $(\cdot,\cdot)_N$ the Mukai pairing on $N$. Since
$\text{Hor}_x$ is isomorphic to $T_{\pi(x)}N$ via $\pi_*$,
$(\pi^*\rho,\pi^*\overline{\rho})_{\text{Hor}}=
(\rho,\overline{\rho})_N\neq 0$. In view of the compactness of $N$,
for sufficiently small $\epsilon$ we have
\[ (e^{i\epsilon \eta}\wedge \pi^*\rho, e^{-i\epsilon
\eta}\wedge \overline{\pi^*\rho})_M=(2\epsilon
i\eta_2)\wedge(e^{i\epsilon \eta_1}\pi^*\rho, e^{-i\epsilon
\eta_1}\pi^*\overline{\rho})_{\text{Hor}}\neq 0 .
 \]   It is straightforward to check that $e^{i\epsilon \eta}\wedge
 \pi^*\rho$ is a $d_{\pi^*H}$-closed form. Therefore
$e^{i\epsilon \eta}\wedge
 \pi^*\rho$ determines a $\pi^*H$-twisted generalized Calabi-Yau structure
 $\J'$.
Finally we note that
\[ (X-i\epsilon dF)\cdot ( e^{i\epsilon \eta}\wedge \pi^*\rho)=( i\iota_X (\epsilon \eta)-i\epsilon dF)
 \wedge( e^{i\epsilon \eta}\wedge \pi^*\rho)=0.
\]

This proves that the $X-i\epsilon dF \in C^{\infty}(L)$, where $L$
is the $i$-eigenbundle of $\J'$. Thus the $S^1$ action is
Hamiltonian with the generalized moment map $\epsilon F$ and trivial
moment one form.

\end{proof}

\begin{remark}\label{quotient-g-calabi-yau} \begin{itemize} \item [a)] Topologically, $M=P\times_{S^1}S^2$ depends only on the choice of
integral cohomology class $[c]\in H^2(N)$. When $[c]\neq 0$, it is easy to see that
 the $S^2$-bundle $M$ is non-trivial, i.e., not a Cartesian product of $N$ an $S^2$.

\item[b)] It is useful to have the following explicit description of $\eta$.
Observe that $d\theta-\Theta$ is a basic form on $(P \times \R)
 \times S^2$. Its restriction to the zero level of $(P \times \R)
 \times S^2$ descends to a one form $\gamma$ on $M$ whose
restriction to each fibre $S^2$ is just $d\theta$. It is easy to see
that on the associated bundle $P\times_{S^1}(S^2-\text{\{two
poles\}})$ we actually have $\eta = F\pi^*_M c+ \gamma \wedge
 dF$. Therefore, the generalized complex quotient taken at the level
 $\epsilon <t<\epsilon$ is $M$ with the quotient generalized Calabi-Yau structure
 $e^{itc}\wedge \rho$.

 \end{itemize}

\end{remark}

\begin{example} Let $N=T^4\cong \C^2/\Z^4$ be the four dimensional
torus with periodic coordinates  $z_i=x_i+y_i$, $1\leq i \leq 2$,
and let \[c=dx_1\wedge dy_1, \,\, \rho_1=e^{-ic}dz_2,
\,\,\rho_2=dz_1\wedge dz_2.\] Then it is easy to check that both
$\rho_1$ and $\rho_2$ are generalized Calabi-Yau structures on $N$
which are of type $1$ and $2$ respectively.

 Let $P$ be the principal $S^1$-bundle
$P$ with Euler class $[c]$, $M=P\times_{S^1}S^2$ the $S^2$-bundle associated to $P$, and $\rho'_i:=e^{-i c}\pi^*\rho_i$
the generalized Calabi-Yau structures on $M$ as constructed in
Theorem \ref{Hamiltonian-G-Calabi-Yau}. By Remark \ref{quotient-g-calabi-yau} the
generalized complex quotient taken at the level set $- \epsilon <
t<\epsilon$ is $M$ with the quotient generalized Calabi-Yau
structure $\widetilde{\rho}'_i=e^{-itc}\wedge\rho_i$, $i=1,2$. Note that
$N$ is endowed with the orientation $(\widetilde{\rho}'_i, \overline{\widetilde{\rho}'}_i)$, where
$(\cdot,\cdot)$ stands for the Mukai pairing on $N$. For convenience, we will denote by $N^+$ the manifold $N$
with the orientation $dx_1\wedge dx_2\wedge dx_3\wedge dx_4$, and $N^-$ the manifold $N$ with the orientation
 $-dx_1\wedge dx_2\wedge dx_3\wedge dx_4$. As straightforward calculation shows that
the Duistermaat-Heckman function for $(M,\rho'_1)$ is
\[\begin{split} f_1(t)&=-\dfrac{\pi}{2}\int_{N^+} (e^{-i(t+1)c}dz_2,
e^{i(t+1)c}d\overline{z}_2)\\&=-\dfrac{\pi}{2}\int_{N^+} 4(t+1)dx_1\wedge
dy_1 \wedge dx_2\wedge dy_2 \\&=-2\pi(t+1) ;\end{split}  \] whereas
the Dustermaat-Heckman function for $(M,\rho'_2)$ is
\[\begin{split} f_2(t)
&=-\dfrac{\pi}{2}\int_{N^-} (e^{-itc}\wedge dz_1 \wedge
dz_2,e^{itc}\wedge d\overline{z}_1\wedge d\overline{z}_2)
\\&=-\dfrac{\pi}{2}\int_{N^-} -4 dx_1\wedge dy_1 \wedge dx_2\wedge dy_2 \\&= -2\pi .\end{split} \]

\end{example}

\appendix
\section{The equivariant cohomology with coefficients in the ring of
formal power series}\label{generalized-eq-diff-forms} In this
appendix, we introduce the equivariant differential forms with
coefficients in the ring of formal power series. Let an $n$
dimensional compact connected Lie group $G$ act on an $m$
dimensional manifold $M$, let $\frak{g}$ be the Lie algebra of $G$.
Choose a basis $\xi_1,\xi_2,\cdots,\xi_n$ of $\frak{g}$ and let
$x_1, x_2,\cdots, x_n$ denote the corresponding dual basis in
$\frak{g}^*$ which generate the polynomial ring $S\frak{g}^*$. Let
$R=\C[[x_1,x_2,\cdots,x_n]]$ be the ring of formal power series over
$\C$.

\begin{definition} Define
\[ \widehat{\Omega}_G(M)=\left( \C[[x_1,x_2,\cdots,x_n]]\otimes \Omega(M) \right)^G, \]
to be the space of  equivariant differential forms with coefficients
in $R$, and define the equivariant differential
\begin{equation}\label{generalized-equivariant-differential} d_G (\sum_{I}x^I\otimes \alpha_I) =\sum_I
\left(x^I\otimes d\alpha_I+ (x^jx^I)\otimes
\iota_{\xi_{j,M}}\alpha_I\right),\,\,\,\sum_{I}x^I\otimes
\alpha_I\in (R\otimes\Omega(M))^G,\end{equation} where $I$ is a
multi-index, and $\xi_{j,M}$ denotes the vector field induced by
$\xi_j \in \frak{g}$. It is easy to check that $d_G^2=0$. Define the
cohomology
\[H(\widehat{\Omega}_G(M), d_G)=\text{ker} d_G/ \text{im} d_G\]
to be the equivariant cohomology with coefficients in $R$.
\end{definition}

 Let us give a more intrinsic description of $\widehat{\Omega}_G(M)$, the space of
 equivariant differential forms with coefficients in $\C[[x_1,x_2,\cdots,x_n]]$.
 Let $a=(x_1,x_2,\cdots,x_n)$ be the ideal in
 $S\frak{g}^*=\C[x_1,x_2,\cdots,x_n]$ generated by
 $x_1,x_2,\cdots,x_n$. Then the sequence of ideals
 \[S\frak{g}^*=a^0 \supset a \supset \cdots \supset a^k \supset \cdots\]
defines an $a$-adic topology on the polynomial ring $S\frak{g}^*$,
c.f. \cite[Chapter 10]{AM99}, and
$R=\C[[x_1,x_2,\cdots,x_n]]=\widehat{S\frak{g}^*}$ is the completion
of $S\frak{g}^*$ under the $a$-adic topology. Next consider the
$S\frak{g}^*$-module $S\frak{g}^*\otimes \Omega(M)$. The sequence of
modules
\[ S\frak{g}^*\otimes \Omega(M) \supset a \left(S\frak{g}^*\otimes \Omega(M)\right)
\supset \cdots \supset a^k \left(S\frak{g}^*\otimes
\Omega(M)\right)\supset \cdots \] defines an $a$-adic topology on
$S\frak{g}^*\otimes \Omega(M)$. So $(S\frak{g}^*\otimes
\Omega(M))^G$ inherits a topology from $S\frak{g}^*\otimes\Omega(M)$
as a subspace. Henceforth we will topologize $(S\frak{g}^*\otimes
\Omega(M))^G$  as a topological subspace of $S\frak{g}^*\otimes
\Omega(M)$ with the $a$-adic topology.  Note that
$\C[[x_1,x_2,\cdots,x_n]] \otimes \Omega(M)$ is the completion of
$S\frak{g}^*\otimes \Omega(M)$ under the $a$-adic topology. Also
note that an element $\sum_Ix^I\otimes \alpha_I\in
(\C[[x_1,x_2,\cdots,x_n]] \otimes \Omega(M))^G $ if and only if each
homogenous component $x^I\otimes \alpha_I \in (S\frak{g}^*\otimes
\Omega(M))^G$. Thus we conclude that $\widehat{\Omega}_G(M)$ is the
completion of $\Omega_G(M)=(S\frak{g}^*\otimes \Omega(M))^G$. After
these preparatory remarks, we are ready to state the following
lemma.

\begin{lemma}\label{continuous maps} Assume that $G$ acts freely on the manifold $M$.  Then
 the usual equivariant differential $d_G:\Omega_G\rightarrow
\Omega_G$, the chain homotopy operator $Q:\Omega_G\rightarrow
\Omega_G$ as defined in (\ref{chain-homotopy}), and the Cartan map
$\mathcal{C}:\Omega_G\rightarrow \Omega_{\text{bas}}$ as defined in
(\ref{cartan-map}) are continuous map from $\Omega_G$ to itself.
\end{lemma}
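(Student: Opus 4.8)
The plan is to make the $a$-adic topology on $\Omega_G$ completely explicit in terms of the polynomial grading, and then to verify for each of the three operators the filtration estimate that is equivalent to continuity. First I would record the filtration. Since $S\g^*$ is generated in degree one, the ideal $a=(x_1,\dots,x_n)$ equals $\bigoplus_{i\ge 1}S^i\g^*$, so $a^k=\bigoplus_{i\ge k}S^i\g^*$; moreover $a$ is a $G$-invariant ideal (the augmentation ideal of the $G$-algebra $S\g^*$), so taking $G$-invariants commutes with this filtration. Writing $\mathcal F_k=\bigoplus_{i\ge k}(S^i\g^*\otimes\Omega(M))^G$, one therefore has $\mathcal F_k=a^k(S\g^*\otimes\Omega(M))\cap\Omega_G$, and the $\mathcal F_k$ form a neighbourhood basis of $0$ for the subspace $a$-adic topology. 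A linear map $T\colon\Omega_G\to\Omega_G$ is continuous for this topology precisely when for every $k$ there is an $l$ with $T(\mathcal F_l)\subseteq\mathcal F_k$; and any linear map that vanishes on some $\mathcal F_l$ is automatically continuous, for an arbitrary topology on the target. It thus suffices to bound, for each operator, how far it can lower the polynomial degree.

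Next I would bookkeep the operators against the bigrading $\Omega_G^{ij}=(S^i\g^*\otimes\Omega^{j-i})^G$, separating the polynomial degree $i$ from the form degree $j-i$. The differential $d_G=d+d'$ either keeps the polynomial degree fixed (the $d$-piece) or raises it by one (the contraction piece $x^j\otimes\iota_{\xi_{j,M}}$ of (\ref{generalized-equivariant-differential})), so $d_G(\mathcal F_k)\subseteq\mathcal F_k$ and $d_G$ is continuous with $l=k$. For the Cartan map $\mathcal C$ of (\ref{cartan-map}), the curvature elements $c^i$ of (\ref{curvture-elements}) are two-forms, so for a multi-index $I$ of length $|I|=i$ the form $c^I$ has degree $2i$ and vanishes as soon as $2i>\dim M$. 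Hence $\mathcal C$ vanishes on $\mathcal F_{\lfloor\dim M/2\rfloor+1}$, an open subgroup, and is continuous by the remark above, in particular into $\Omega_{\text{bas}}$ with any topology.

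The one operator requiring care is the chain homotopy $Q=KF(I+RF+(RF)^2+\cdots)$ of (\ref{chain-homotopy}), and this is where I expect the main obstacle. The difficulty is that $K=-\theta^r\partial_r$ and $R=d\theta^r\partial_r$ each contain $\partial_r=\partial/\partial x^r$, which lowers the polynomial degree, so a priori $Q$ could drop the polynomial degree without bound and destroy continuity; and the series defining $Q$ is infinite. The key observation that resolves both issues is that $R$ raises the form degree by two (it wedges with the two-form $d\theta^r$) while $F$ preserves the bigrading, so $RF$ strictly raises the form degree by two at each application. Since form degrees on the $m$-dimensional manifold $M$ are bounded by $m=\dim M$, one gets $(RF)^p=0$ for $p>m/2$; thus, applied to any fixed element, the geometric series is a finite sum with at most $\lfloor m/2\rfloor+1$ nonzero terms. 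Each term $(RF)^j$ lowers the polynomial degree by $j\le\lfloor m/2\rfloor$, and the leading $KF$ lowers it by one more, so $Q$ lowers the polynomial degree by at most $1+\lfloor m/2\rfloor$; that is, $Q(\mathcal F_l)\subseteq\mathcal F_{l-1-\lfloor m/2\rfloor}$. Taking $l=k+1+\lfloor m/2\rfloor$ then gives $Q(\mathcal F_l)\subseteq\mathcal F_k$, which proves continuity. The crux of the whole argument is exactly this finiteness forced by $\dim M<\infty$: it is what keeps the polynomial degree from falling arbitrarily far, and it is the reason $Q$, despite being built from an infinite series of degree-lowering operators, extends continuously to the $a$-adic completion.
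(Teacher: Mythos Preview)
Your proof is correct and follows essentially the same route as the paper's: for $d_G$ you use that it does not decrease polynomial degree, for $\mathcal{C}$ you use that $c^I=0$ once $2|I|>\dim M$, and for $Q$ you use that $RF$ raises form degree by two so the series truncates and the total drop in polynomial degree is bounded by $1+\lfloor\dim M/2\rfloor$. The only cosmetic difference is that the paper checks the filtration estimates on the larger space $S\g^*\otimes\Omega(M)$ and then restricts to invariants, whereas you work directly with $\mathcal F_k\subset\Omega_G$; your version is slightly more explicit about the continuity criterion and the precise bound for $Q$, but the substance is identical.
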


\begin{proof} We divide our proofs into three steps.
 \begin{itemize} \item[1)]  $d_G:\Omega_G\rightarrow
\Omega_G$ is continuous.

Note that as an operator $d_G$ is also defined on
$S\frak{g}^*\otimes \Omega(M)$. Furthermore, we have \[d_G
\left(a^k( S\frak{g}^*\otimes \Omega(M))\right) \subset a^k(
S\frak{g}^*\otimes \Omega(M)), \] for any $k\geq 0$. So
$d_G:S\frak{g}^*\otimes \Omega(M) \rightarrow S\frak{g}^*\otimes
\Omega(M)$ is continuous and so its restriction to $\Omega_G$ is
also continuous.

\item [2)] The chain homotopy map $Q$ is continuous.

Let $K$, $R$ and $F$ be the operators we introduced in Section
\ref{Equivariant de Rham theory}. Note that $K$, $RF$ and $Q$ are
all defined on $S\frak{g}^*\otimes \Omega(M)$ as well, and we have
\[ K\left(a^k( S\frak{g}^*\otimes \Omega(M))\right)\subset a^{k-1}( S\frak{g}^*\otimes
\Omega(M)),\]
\[ (RF)^i\left(a^k( S\frak{g}^*\otimes \Omega(M))\right) \subset a^{k-i}( S\frak{g}^*\otimes
\Omega(M)),\] where $0\leq i\leq k$. Thus $K$ and $RF$ are both
continuous operators and so their restrictions to $\Omega_G$ are
also continuous. Note that given any $\gamma \in S\frak{g}^*\otimes
\Omega(M)$, $R=d\theta^r\partial_r$ increases the form degree of
$\gamma$ by two, whereas $F$ preserves both the form degree and
polynomial degree of $\gamma$. So for $i>\frac{1}{2}\text{dim}M$ we
have $(RF)^i=0$ for dimension reasons. Therefore we conclude that
$Q=KF\left(RF+(RF)^2+\cdots\right)$ is also continuous on
$S\frak{g}^*\otimes\Omega(M)$ and so the restriction of $Q$ is
continuous on $\Omega_G$.

\item [3)] The Cartan map $\mathcal{C}:\Omega_G\rightarrow
\Omega_{\text{bas}}$ is continuous.

This follows from the simple observation that, for dimension reasons, by (\ref{cartan-map})
we have $\mathcal{C}\left(a^k( S\frak{g}^*\otimes
\Omega(M)\right)=0$ if $k> \frac{1}{2}\text{dim}M$.

\end{itemize}

\end{proof}

As we have explained, $\widehat{\Omega}_G(M)$ is the completion of
$\Omega_G(M)$. Thus $d_G$, $Q$, and $\mathcal{C}$ have an unique
extension to $\widehat{\Omega}_G(M)$. One checks easily that the
extension of $d_G$ to $\widehat{\Omega}_G$ is the one as we defined
in Definition \ref{generalized-equivariant-differential}, and the
extension of the Cartan map $\mathcal{C}:\widehat{\Omega}_G
\rightarrow \Omega_{\text{bas}}$ is given by
\begin{equation}\label{extended-Cartan-map} \sum_{I}x^I\otimes \alpha_I
\rightarrow \sum_I c^I\wedge \alpha_I,\end{equation} where $c^i$ is
the curvature elements as defined in (\ref{curvture-elements}), and
$I=(i_1,i_2,\cdots,i_n)$ is a multi-index. Note that if $\vert \,I
\,\vert =\sum_{k=1}^n i_k >\frac{1}{2}\text{dim }M$, then $c^I\wedge
\alpha_I=0$ for dimension reasons. So the right hand side of
(\ref{extended-Cartan-map}) is actually a finite sum. Since the
equality $d_GQ+Qd_G=I-\mathcal{C}$ holds on $\Omega_G$, it also
holds on the completion of $\Omega_G$, i.e., $\widehat{\Omega}_G$.

In summary, we have proved the following result.

\begin{theorem}\label{extended-Cartan-theorem}  Assume the action of $G$ on $M$ is free. Then the Cartan
map (\ref{extended-Cartan-map}) induces a natural isomorphism from
$H(\widehat{\Omega}_G(M), d_G)$ to the ordinary cohomology $H(M/G)$.
\end{theorem}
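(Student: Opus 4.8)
The plan is to deduce this from the classical free-action Cartan isomorphism (Theorem \ref{Cartan--Map}) by transporting it across the $a$-adic completion, the key mechanism being the chain homotopy identity $d_G Q + Q d_G = I - \mathcal{C}$ recorded in Section \ref{Equivariant de Rham theory}. Since $\widehat{\Omega}_G(M)$ is, by the discussion above, precisely the completion of $\Omega_G(M)$ for the $a$-adic topology, and since $d_G$, $Q$, and $\mathcal{C}$ are all continuous by Lemma \ref{continuous maps}, the whole homotopy-equivalence picture should descend to the completion almost formally.

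First I would extend $d_G$, $Q$, and $\mathcal{C}$ by continuity to unique continuous operators on $\widehat{\Omega}_G$, checking that the extended $d_G$ is the differential of Definition \ref{generalized-equivariant-differential} and that the extended Cartan map is (\ref{extended-Cartan-map}); here it is essential that for dimension reasons $c^I = 0$ whenever $|I| > \frac{1}{2}\dim M$, so that $\sum_I c^I \wedge \alpha_I$ is always a finite sum and $\mathcal{C}$ genuinely maps into the uncompleted basic complex $\Omega_{\text{bas}}$ with no convergence issue. Next, because $\mathcal{C} d_G = d \mathcal{C}$ and $d_G Q + Q d_G = I - \mathcal{C}$ are identities between continuous operators holding on the dense subspace $\Omega_G$, they persist on $\widehat{\Omega}_G$. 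Regarding $\mathcal{C}$ as a map into $\Omega_{\text{bas}} \hookrightarrow \widehat{\Omega}_G$, this exhibits $\mathcal{C}$ and the inclusion as mutually inverse chain homotopy equivalences between $(\widehat{\Omega}_G, d_G)$ and $(\Omega_{\text{bas}}, d)$; concretely, if $d_G \omega = 0$ then $\omega - \mathcal{C}\omega = d_G(Q\omega)$, giving both surjectivity and injectivity of the induced map. Composing with the isomorphism $\pi^* \colon H(M/G) \xrightarrow{\sim} H(\Omega_{\text{bas}})$ noted just before Theorem \ref{Cartan--Map} then delivers the claimed natural isomorphism $H(\widehat{\Omega}_G(M), d_G) \cong H(M/G)$.

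The hard part is really all packaged into Lemma \ref{continuous maps}: once continuity of $d_G$, $Q$, and $\mathcal{C}$ for the $a$-adic topology is secured, the rest is the standard bookkeeping that continuous operators agreeing on a dense subset agree on its closure, and that a chain homotopy equivalence induces a cohomology isomorphism. The only additional point demanding care is confirming that $\mathcal{C}$ lands in $\Omega_{\text{bas}}$ rather than in some completion of it, which is exactly what the dimension truncation $c^I = 0$ for large $|I|$ guarantees.
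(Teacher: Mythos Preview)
Your proposal is correct and follows essentially the same route as the paper: extend $d_G$, $Q$, and $\mathcal{C}$ to $\widehat{\Omega}_G$ by the continuity established in Lemma \ref{continuous maps}, note that the extended Cartan map lands in $\Omega_{\text{bas}}$ because $c^I=0$ for $|I|>\frac{1}{2}\dim M$, and then transport the chain homotopy identity $d_GQ+Qd_G=I-\mathcal{C}$ from the dense subspace $\Omega_G$ to its completion. The paper's argument is exactly this, stated in the paragraph immediately preceding the theorem.
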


Indeed, we can prove a slightly more general result. Suppose that $H \in \Omega^3(M)$ is a closed basic three form.
Then the map \[H\wedge: \Omega_G \rightarrow \Omega_G,\,\,\, \alpha \mapsto H\wedge \alpha\] is obviously continuous. Since $H$ is a basic form, a straightforward check
shows that $Q (H\wedge \alpha) +H\wedge ( Q\alpha)=0$ for any $\alpha \in \Omega_G$. As a result, $d_{G,H}Q+Qd_{G,H}=I-\mathcal{C}$ holds on $\Omega_G$, and so it also holds on $\widehat{\Omega}_G$. This proves the following result.

\begin{theorem}\label{twisted-version-extended-Cartan-theorem}    Assume the action of $G$ on $M$ is free. Let
$\pi: M \rightarrow M/G$ be the quotient map. And assume that $H\in
\Omega^3(X)$ is a basic form so that there is a three form
$\widetilde{H} \in \Omega^3(X/G)$ which satisfies
$\pi^*\widetilde{H}=H$. Then the Cartan map (\ref{extended-Cartan-map}) induces a natural isomorphism from
$H_G(M, H)$ to the twisted cohomology $H(M/G, \widetilde{H})$.


\end{theorem}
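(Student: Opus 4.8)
The plan is to deduce the twisted statement from the untwisted Theorem~\ref{extended-Cartan-theorem} by upgrading the chain homotopy between $(\widehat{\Omega}_G,d_G)$ and $(\Omega_{\text{bas}},d)$ to its $H$-twisted analogue, using the \emph{same} Cartan map (\ref{extended-Cartan-map}) and the \emph{same} operator $Q$ of (\ref{chain-homotopy}). Recall that on $\widehat{\Omega}_G$ one already has the identity $d_G Q + Q d_G = I-\mathcal{C}$, together with the chain-map property $\mathcal{C}\,d_G=d\,\mathcal{C}$. The twisted differentials are $d_{G,H}=d_G-H\wedge$ on $\widehat{\Omega}_G$ and $d_H=d-H\wedge$ on $\Omega_{\text{bas}}$. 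I would first check that $\mathcal{C}$ is still a chain map after twisting, then show that $Q$ is still a chain homotopy for $d_{G,H}$; granting these two facts, the twisted identity $d_{G,H}Q+Qd_{G,H}=I-\mathcal{C}$ holds verbatim, and the theorem is formal.

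For the chain-map property, since $H$ is basic the operator $H\wedge$ acts only on the $\Omega(M)$ factor and leaves the polynomial variables $x^r$ untouched; writing $\mathcal{C}(\sum_I x^I\otimes\alpha_I)=\sum_I c^I\wedge\alpha_I$ with the curvature monomials $c^I$ of (\ref{curvture-elements}) even, the equality $c^I\wedge H=H\wedge c^I$ gives $\mathcal{C}(H\wedge\alpha)=H\wedge\mathcal{C}(\alpha)$, whence $\mathcal{C}\,d_{G,H}=d\,\mathcal{C}-H\wedge\mathcal{C}=d_H\,\mathcal{C}$. The heart of the argument is the claim $Q(H\wedge\alpha)+H\wedge(Q\alpha)=0$, i.e. that $Q$ \emph{anticommutes} with $H\wedge$. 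The crucial input is again basicness: $H$ is independent of the $x^r$, so $H\wedge$ commutes with each differentiation $\partial_r=\partial/\partial x^r$ appearing in $K=-\theta^r\partial_r$ and $R=d\theta^r\partial_r$, and the sign is then governed purely by parities against the $3$-form $H$. Since $\theta^r$ is a $1$-form one computes $K(H\wedge\alpha)=-H\wedge K\alpha$, so $K$ anticommutes with $H\wedge$; since $d\theta^r$ is a $2$-form one gets $R(H\wedge\alpha)=H\wedge R\alpha$, so $R$ commutes with $H\wedge$; and the bi-grading-preserving factor $F$ commutes with $H\wedge$ (a straightforward check from its explicit description in \cite{GS99}). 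Writing $Q=K\,S$ with $S=F(I+RF+(RF)^2+\cdots)$, the factor $S$ commutes with $H\wedge$ while $K$ anticommutes, so $Q$ anticommutes, as claimed. Substituting $d_{G,H}=d_G-H\wedge$ into $d_{G,H}Q+Qd_{G,H}$ and cancelling the terms $H\wedge Q\alpha$ and $Q(H\wedge\alpha)$ produces exactly $I-\mathcal{C}$.

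With the twisted chain homotopy and chain-map properties in hand, $\mathcal{C}$ induces an isomorphism $H_G(M,H)=H(\widehat{\Omega}_G,d_{G,H})\cong H(\Omega_{\text{bas}},d_H)$. It then remains only to identify the target with $H(M/G,\widetilde{H})$: since $G$ acts freely, $\pi^*\colon\Omega(M/G)\to\Omega_{\text{bas}}$ is an isomorphism of complexes, and the hypothesis $\pi^*\widetilde{H}=H$ shows that $\pi^*$ intertwines $d_{\widetilde H}$ with $d_H$, so $H(\Omega_{\text{bas}},d_H)\cong H(M/G,\widetilde{H})$. Composing gives the asserted natural isomorphism induced by the Cartan map. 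I expect the only delicate point to be the sign bookkeeping establishing $Q(H\wedge\alpha)+H\wedge Q\alpha=0$; once basicness is used to commute $H\wedge$ past the polynomial differentiations, everything else reduces formally to the untwisted Theorem~\ref{extended-Cartan-theorem}.
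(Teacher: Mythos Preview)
Your proposal is correct and follows essentially the same route as the paper: the paper's proof consists precisely of observing that $H\wedge$ is continuous on $\Omega_G$, asserting the anticommutation $Q(H\wedge\alpha)+H\wedge(Q\alpha)=0$ as a ``straightforward check'' from basicness of $H$, and concluding that $d_{G,H}Q+Qd_{G,H}=I-\mathcal{C}$ on $\Omega_G$ and hence on $\widehat{\Omega}_G$. Your version is in fact more explicit than the paper's, since you unpack the anticommutation by analyzing $K$, $R$, and $F$ separately and you also verify that $\mathcal{C}$ intertwines the twisted differentials and that $\pi^*$ identifies $(\Omega_{\text{bas}},d_H)$ with $(\Omega(M/G),d_{\widetilde{H}})$, points the paper leaves implicit.
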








\end{document}